\documentclass[10pt]{amsart}
\usepackage{amssymb,amsbsy,amsmath,amsfonts,amssymb,amscd}
\usepackage{latexsym,euscript,exscale}





\title{Banach spaces without minimal subspaces - Examples}
\author {Valentin Ferenczi and Christian Rosendal}
\date {May 2009}
\linespread {1}

\newcommand {\N}{\mathbb N}
\newcommand {\Q}{\mathbb Q}

\renewcommand{\leq}{\ensuremath{\leqslant}}
\renewcommand{\geq}{\ensuremath{\geqslant}}

\newcommand{\norm}[1]{\lVert#1\rVert}

\newcommand{\til}{\rightarrow}

\newcommand {\Del}{ \; \Big| \;}
\newcommand {\del}{ \; \big| \;}

\newcommand {\ku} {\mathcal}

\newcommand{\inv}{^{-1}}

\numberwithin{equation}{section}

\newtheorem{thm}{Theorem}[section]
\newtheorem{cor}[thm]{Corollary}
\newtheorem{lemme}[thm]{Lemma}
\newtheorem{prop} [thm] {Proposition}
\newtheorem{defi} [thm] {Definition}

\newtheorem{prob}[thm]{Problem}

\begin{document}
\thanks{The first author acknowledges the support of FAPESP grant 2008/11471-6 and  the second author the support of NSF grant DMS 0556368}
\subjclass[2000]{Primary: 46B03, Secondary 03E15}

\keywords{Tight Banach spaces, Dichotomies, Classification of Banach spaces}

\begin{abstract}  We analyse several examples of separable Banach spaces, some of them new,  and relate them to
  several dichotomies obtained in \cite{FR4}, by
classifying them according to which side of the dichotomies they fall.
\end{abstract}

 \maketitle

\tableofcontents

\section{Introduction}\label{intro}

In this article we give several new examples of Banach spaces, corresponding
to different classes of a list  defined in \cite{FR4}. This paper may be seen as
a more empirical continuation of \cite{FR4} in which our stress is on the
study of examples for the new classes of
Banach spaces considered in that work.

\subsection{Gowers' list of inevitable classes}

In the paper \cite{g:dicho}, W.T. Gowers had defined a program of isomorphic
classification of Banach spaces. The aim of this program is   a {\em loose
classification of Banach spaces up to subspaces}, by producing  a list of classes of Banach spaces such that:

(a)  if a space belongs to a class,
then every subspace belongs to the same class, or maybe, in the case when the
properties defining the class depend on a basis of the space, every block
subspace belongs to the same class,

(b) the classes are {\em inevitable}, i.e., every Banach space contains a subspace in one of the classes,

(c) any two classes in the list are disjoint,

(d) belonging to one class gives a lot of information about operators that may be defined on the space or on its subspaces.

\

We shall refer to such a list as a {\em list of  inevitable classes of
  Gowers}.
For the motivation of Gowers' program as well as the relation of this program
  to classical problems in Banach space theory we refer to \cite{FR4}.
Let us just say that the class of spaces $c_0$ and $\ell_p$ is seen as the
  nicest or most regular class, and
  so, the objective of Gowers' program really is the classification of those spaces (such as Tsirelson's
  space $T$) which do not contain a copy of $c_0$ or $\ell_p$. Actually, in \cite{FR4}, mainly spaces without {\em minimal subspaces} are
  classified, and so in this article, we shall consider various examples of
  Banach spaces without minimal subspaces. 
We shall first  give a  summary of the classification obtained in \cite{FR4}
and of the results that led to that classification.

After the construction by Gowers and Maurey of a {\em hereditarily
indecomposable} (or HI) space $GM$, i.e.,  a space such that no subspace may be
written as the direct sum of infinite dimensional subspaces \cite{GM}, Gowers
proved that every Banach space contains either an HI subspace or a subspace
with an unconditional basis \cite{g:hi}.
This dichotomy is called {\em first dichotomy} of
Gowers in \cite{FR4}.
 These were the first two examples of
inevitable classes.  He then 
refined the list by proving a {\em second dichotomy}: any Banach space contains a subspace with a
basis such that either no two disjointly supported block subspaces are
isomorphic, or such that any two subspaces have further
subspaces which are isomorphic. He called the second property {\em quasi
minimality}. Finally,
H. Rosenthal had defined  a space to be {\em minimal} if it embeds into any of its subspaces. A
quasi minimal space which does not contain a minimal subspace is called {\em
strictly quasi minimal}, so Gowers again divided the class of quasi minimal
spaces into the class of strictly quasi minimal spaces and the class of minimal
spaces.

Gowers therefore produced a list of  four inevitable classes of Banach spaces,
corresponding to classical examples, or more recent couterexamples to
classical questions:
HI spaces, such as $GM$; spaces with bases such that no disjointly supported
subspaces are isomorphic, such as the
couterexample $G_u$ of Gowers to the hyperplane's problem of Banach \cite{g:hyperplanes};
strictly quasi minimal spaces with an unconditional basis, such as Tsirelson's space  $T$ \cite{tsi} ; and
finally, minimal spaces, such as $c_0$ or $\ell_p$, but also $T^*$,
Schlumprecht's space $S$ \cite{S1}, or as proved recently in \cite{MP}, its dual $S^*$.

\

\subsection{The three new dichotomies}
In \cite{FR4} three  dichotomies for Banach spaces  were
obtained. The first one of these new dichotomies,  
 the {\em third dichotomy}, concerns the property of minimality defined by Rosenthal. Recall that a
 Banach space is minimal if it embeds into any of its infinite dimensional
 subspaces. On the other hand, a space $Y$ is {\em
tight} in a basic sequence $(e_i)$  if there is a sequence of successive
subsets $I_0<I_1<I_2<\ldots$ of $\N$, such that for all infinite subsets
$A\subseteq \N$, we have
$$
Y\not\sqsubseteq [e_n\del n\notin \bigcup_{i\in A}I_i].
$$

A {\em tight basis} is a basis such that every subspace is tight
in it, and a {\em tight space} is a space with a tight basis \cite{FR4}.

The subsets $I_n$ may clearly be chosen to be intervals or even to form a partition of $\N$. However it is convenient not to require this condition in the definition, in view of forthcoming special cases of tightness.

 It is  observed in \cite{FR4} that the tightness property is hereditary,
 incompatible with minimality, and it is proved that: 

\begin{thm}[3rd dichotomy, Ferenczi-Rosendal 2007]\label{main}
Let $E$ be a Banach space without minimal subspaces. Then $E$ has a tight subspace.
\end{thm}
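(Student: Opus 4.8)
The plan is to build a block subspace $Y=[y_n]$ of $E$ together with successive intervals $I_0<I_1<\ldots$ of $\N$ witnessing that $Y$ is tight in $(e_n)$, and to do so uniformly below every block subspace of $Y$, so that $(y_n)$ is a tight basis of $Y$; since tightness is hereditary, this exhibits a tight subspace of $E$. Two standard facilities will be used throughout: after passing to a subspace we may assume $(e_n)$ is a normalized bimonotone Schauder basis, and by standard perturbation arguments for Schauder bases we may assume that whenever $X\sqsubseteq Z$ the embedding is, after passing to a block subspace of $X$ and modulo an arbitrarily small error, block-diagonal with respect to $(e_n)$; in particular $X\sqsubseteq Z$ holds iff $X$ has a block subspace isomorphic to a block subspace of $Z$, and ``$W\sqsubseteq[z_n]$'' is an analytic condition on the normalized block sequence $(z_n)$. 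The essential tool is Gowers' Ramsey-type (determinacy) theorem for analytic sets of normalized block sequences from \cite{g:dicho}.

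The first step extracts from the hypothesis the following diagonal consequence: \emph{every block subspace $W$ of $E$ contains a block subspace $Z$ such that $W\not\sqsubseteq Z'$ for every block subspace $Z'$ of $Z$}. Apply Gowers' dichotomy below $W$ to the analytic set $\{(z_n):W\sqsubseteq[z_n]\}$. One alternative gives exactly such a $Z$. In the other, Player II has a winning strategy in the Gowers game on some block subspace $Z\subseteq W$; letting Player I confine the play to an arbitrary block subspace $Z'\subseteq Z$ then yields $W\sqsubseteq Z'$ for every such $Z'$ (after absorbing the perturbation), and since $Z\subseteq W$ gives $Z\sqsubseteq W$, the space $Z$ would be minimal --- contradicting the hypothesis. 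So the first alternative always holds.

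Next I would build $Y=[y_n]$ and the intervals $(I_n)$ by a single interleaved recursion (a fusion), carried out below every block subspace of $Y$ so that Gowers' theorem can be invoked once more to obtain the required uniformity over the continuum many block subspaces $W$ of $Y$. At stage $n$, with $y_0<\ldots<y_{n-1}$ and $I_0<\ldots<I_{n-1}$ committed and $\mathrm{supp}(y_k)\subseteq I_k$, I would apply the diagonal consequence to the finitely many block subspaces that an attempted embedding could be forced to involve at this stage --- tails of the relevant spaces, and the block subspaces manufactured from the current data --- in order to locate a point $R$ beyond which no further block vectors can help embed those subspaces into a space from which a large block of basis vectors around $\mathrm{supp}(y_{n-1})$ has been deleted; I would then put the right endpoint of $I_n$ past $R$ and choose $y_n$ with $\mathrm{supp}(y_n)\subseteq I_n$ inside the appropriate diagonal witness. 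The purpose of the choices is that if some block subspace $W$ of $Y$ satisfied $W\sqsubseteq[e_m:m\notin\bigcup_{i\in A}I_i]$ with $A$ infinite, then the embedding --- being near block-diagonal, so that its image misses infinitely many of the intervals $I_i$ --- would restrict to an embedding of a tail of $W$, that is, of one of the block subspaces isolated during the construction, into a block subspace of one of the diagonal witnesses $Z$, contradicting the diagonal property. Hence every block subspace of $Y$ is tight in $(e_n)$, so $Y$ is a tight subspace of $E$.

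The hard part is this last step: converting the purely local failure of minimality --- the diagonal consequence, which concerns one block subspace at a time --- into the global combinatorial statement of tightness, whose quantifier ``for all infinite $A$'' requires a single subspace and a single sequence of intervals to defeat infinitely many simultaneous deletions. Choosing correctly which finitely many objects to feed the diagonal lemma at stage $n$, how far out to place $I_n$, and how to fuse the stages (and the Gowers applications over all $W$) into an honest witnessing sequence of intervals is the technical heart of the argument; by comparison, the reduction to block-diagonal embeddings and the coding needed to apply Gowers' theorem are routine.
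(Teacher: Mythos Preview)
This paper does not prove Theorem~\ref{main}; it is quoted from the companion article \cite{FR4}, and the present paper only uses it as background for the classification chart. So there is no proof here against which to compare your proposal.

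Judged on its own terms, your outline has the right shape but an explicit gap at the decisive point. Your ``diagonal consequence'' is correct, though it does not need Gowers' theorem: if $W$ is not minimal there is $Z\subseteq W$ with $W\not\sqsubseteq Z$, and then trivially $W\not\sqsubseteq Z'$ for every $Z'\subseteq Z$. The real content is your fusion, and there you do not give a construction --- you write that one must choose ``correctly which finitely many objects to feed the diagonal lemma at stage $n$'' and call this ``the technical heart'', but you never say what those objects are or why finitely many suffice. At stage $n$ only $y_0,\ldots,y_{n-1}$ are committed, so no finite list of subspaces available at that moment determines the continuum of block subspaces $W\subseteq Y$ that the tightness condition must eventually cover; a naive diagonalisation cannot close this quantifier. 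There is also a mismatch you should address: you build intervals $I_n$ in the indexing of $(e_n)$ and argue about tightness of block subspaces of $Y$ in $(e_n)$, whereas what is needed is that $(y_n)$ be a tight basis of $Y$, i.e.\ tightness relative to $(y_n)$.

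In \cite{FR4} the passage from local non-minimality to global tightness is not done by a bare fusion but by a determinacy/Ramsey argument in which the witnessing intervals are produced \emph{uniformly} (indeed continuously, via a coding of block sequences) from the block sequence being tested; it is this uniformity, obtained from a single application of an infinite game or Ramsey theorem to a suitably coded analytic set, that replaces the unspecified choices in your recursion. Your proposal names the right toolbox but stops exactly where the work begins.
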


Actual examples of tight spaces in \cite{FR4} turn out to satisfy  one of two stronger forms of tightness.
The first was called  {\em tightness by range}. Here the
range, ${\rm range} \ x$, of  a vector $x$ is the smallest interval of integers
containing its support on the given basis, and the range  of a block subspace $[x_n]$ is
$\bigcup_n {\rm range} \ x_n$. A  basis $(e_n)$ is tight by range when for
every block subspace $Y=[y_n]$, the sequence of successive subsets
$I_0<I_1<\ldots$ of $\N$ witnessing the tightness of $Y$ in $(e_n)$ may be
defined by $I_k={\rm range}\ y_k$ for each $k$. This is equivalent to no two
block subspaces with disjoint ranges being comparable, where two spaces are comparable if one embeds into the other.

When  the definition of tightness may be checked with $I_k={\rm supp}\ y_k$
instead of ${\rm range}\ y_k$, then a stronger property is obtained which is called
tightness by support, and is 
equivalent to the property defined by Gowers in the second dichotomy  that no disjointly
supported block subspaces are isomorphic,  Therefore $G_u$ is an example of
space with a basis which is tight by support and therefore by range.

The second kind of tightness was called {\em tightness with constants}. A basis $(e_n)$ is
tight with constants when for for every infinite dimensional space $Y$, the
sequence of successive subsets $I_0<I_1<\ldots$ of $\N$ witnessing the
tightness of $Y$ in $(e_n)$ may be chosen so that
$Y \not\sqsubseteq_K [e_n \del n \notin I_K]$ for each $K$. This is the case for Tsirelson's space $T$ or its $p$-convexified version $T^{(p)}$ \cite{CS}.

As we shall see, one of the aims of this paper is to present various examples of tight spaces
of these two forms.

\

In \cite{FR4} it was proved  that there  are natural dichotomies between each of these strong
forms of tightness and respective weak forms of minimality. For the first
notion, a space $X$ with a
basis $(x_n)$ is said to be {\em subsequentially minimal} if every subspace of  $X$
contains an isomorphic copy of a subsequence of $(x_n)$. Essentially
this notion had  been previously considered by Kutzarova, Leung, Manoussakis
and Tang in the context of modified partially mixed Tsirelson spaces \cite{KLMT}.

\begin{thm}[4th dichotomy, Ferenczi-Rosendal 2007]\label{main2}
Any Banach space $E$ contains a
subspace with a basis that is either tight by range or is subsequentially minimal.
\end{thm}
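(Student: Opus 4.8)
The tool is Gowers' block Ramsey dichotomy from \cite{g:hi}: given a basis and an analytic set $\mathcal A$ of normalised block sequences, either some block subspace has \emph{all} of its normalised block sequences outside $\mathcal A$, or some block subspace $X$ is such that the second player has a strategy in Gowers' game on $X$ forcing the outcome into a prescribed small expansion $\mathcal A_\Delta$ of $\mathcal A$. The plan is to build the desired basis $(x_n)$ by a fusion in which Gowers' dichotomy is consulted at every step, arranging that along a subsequence of the construction steps the same alternative always occurs: if it is the ``avoidance'' alternative we will get that $[x_n]$ is tight by range, and if it is the ``strategy'' alternative we will get that $[x_n]$ is subsequentially minimal (it suffices to verify these properties against block subspaces).

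Fix a normalised basis $(e_n)$ of $E$. During the fusion one keeps a finite block sequence $x_0<\dots<x_{n-1}$ chosen so far and a current block subspace $Y_n$, and applies Gowers' dichotomy inside $Y_n$ to the set $\mathcal A_n$ of block sequences $\bar y$ (above $x_{n-1}$) such that $[\bar y]$ contains an isomorphic copy of a subsequence-span $[x_i:i\in N]$ with $N$ disjoint from the range of $\bar y$. One checks each $\mathcal A_n$ is analytic: a copy of $[x_i:i\in N]$ inside $[\bar y]$ is coded by a bounded operator, i.e.\ a sequence of vectors in the closed span of $\bar y$ together with an isomorphism constant, and the quantifier over $N$ is countable. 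In the avoidance case one passes to $Z\subseteq Y_n$ none of whose block sequences meets $\mathcal A_n$, and chooses $x_n\in Z$, $Y_{n+1}\subseteq Z$; in the strategy case one records the strategy and chooses $x_n,Y_{n+1}$ compatibly with later diagonalisation. Passing to a sub-fusion, assume the same alternative occurs at every step.

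If the strategy alternative occurs everywhere, one diagonalises the step-$n$ strategies into a single strategy for the second player in Gowers' game on $X=[x_n]$; then for any block subspace $Y$ of $X$, having the first player play a decreasing sequence of block subspaces of $Y$ produces a block sequence $\bar y$ with $[\bar y]\subseteq Y$ and $[\bar y]\in\mathcal A_\Delta$, so (for $\Delta$ small) $Y$ contains a genuine isomorphic copy of a subsequence of $(x_n)$; hence $X$ is subsequentially minimal. If the avoidance alternative occurs everywhere, then no block subspace of $X$ contains a copy of a subsequence-span $[x_i:i\in N]$ sitting disjointly from the corresponding range; a gliding-hump argument upgrades this to the full statement that for every block subspace $Y=[y_k]$ of $X$ and every infinite $A\subseteq\N$ one has $Y\not\sqsubseteq[x_n:n\notin\bigcup_{k\in A}\mathrm{range}\,y_k]$ (an embedding there, restricted to $[y_k:k\in A]$ and perturbed onto blocks of $(x_n)$, would place inside a subsequence-span of $(x_n)$ disjoint from $\bigcup_{k\in A}\mathrm{range}\,y_k$ a copy of $[y_k:k\in A]$, and then a copy of a subsequence-span in forbidden position). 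Thus $I_k=\mathrm{range}\,y_k$ witnesses tightness of every block subspace of $X$, i.e.\ $X$ is tight by range.

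The main obstacle is the pair of ``upgrade'' steps: showing that the diagonalisation of the stepwise strategies really yields one strategy for the second player in Gowers' game on the \emph{final} space $X$, so that the strategy alternative delivers subsequential minimality with subsequences of $(x_n)$ rather than of $(e_n)$; and showing that the stepwise avoidance, which only forbids disjointly placed subsequence-spans, upgrades via gliding hump to tightness by range for \emph{arbitrary} block subspaces. Both require careful control of basis and isomorphism constants along the fusion and of the $\Delta$-expansion in Gowers' theorem.
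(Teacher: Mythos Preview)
First, note that this paper does not itself prove Theorem~\ref{main2}: the result is quoted from \cite{FR4}, and the present article is devoted to analysing examples. So there is no proof in this paper to compare your attempt against; the comments below concern the internal soundness of your sketch.

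The overall architecture (Gowers' determinacy plus a fusion, splitting on which alternative stabilises) is indeed the shape of the argument in \cite{FR4}, but two steps in your outline are genuine gaps rather than routine details. First, your sets $\mathcal A_n$ are not well-defined: at stage $n$ only $x_0,\dots,x_{n-1}$ have been chosen, yet membership in $\mathcal A_n$ asks whether $[\bar y]$ contains a copy of $[x_i:i\in N]$ for an \emph{infinite} $N$, which either forces $N\subseteq\{0,\dots,n-1\}$ (and the condition becomes vacuous) or refers to vectors not yet constructed. In \cite{FR4} the analytic set fed to Gowers' theorem encodes the relation ``two block sequences with disjoint ranges, one embedding into the other'' directly, and the basis $(x_n)$ witnessing subsequential minimality is \emph{produced by} the fusion in the strategy case rather than tested against in advance. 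Second, and more seriously, your avoidance branch does not yield tightness by range. What avoidance of your $\mathcal A_n$ gives is that no block subspace $[\bar y]$ \emph{contains} a disjointly-placed subsequence-span of $(x_n)$; what tightness by range requires is that no block subspace $[\bar y]$ \emph{embeds into} a disjointly-placed subsequence-span. These are opposite directions of embedding, and your parenthetical ``and then a copy of a subsequence-span in forbidden position'' does not bridge them: from $[y_k:k\in A]\hookrightarrow[x_n:n\in N]$ you cannot extract a subsequence of $(x_n)$ sitting inside some block subspace with disjoint range. The equivalence of tightness by range with ``no two disjoint-range block subspaces are comparable'' is what makes the correct game symmetric in the two sequences, and this symmetry is absent from your $\mathcal A_n$.
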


The second case in Theorem \ref{main2}
may be improved to the following hereditary property of a basis $(x_n)$, that we call {\em
sequential minimality}: $(x_n)$ is quasi minimal and  every block sequence of
$[x_n]$ has a subsequentially minimal block sequence.

There is also a dichotomy concerning tightness with constants. Recall that given two Banach spaces $X$ and $Y$, we say that $X$ is {\em crudely finitely
representable} in $Y$ if there is a constant $K$ such that for any
finite-dimensional subspace $F\subseteq X$ there is an embedding $T\colon F\til
Y$ with constant $K$, i.e., $\norm{T}\cdot\norm{T\inv}\leq K$.
 A space $X$ is said to be {\em locally minimal} if for some constant
$K$, $X$ is $K$-crudely finitely representable in any of its subspaces.
\begin{thm}[5th dichotomy, Ferenczi-Rosendal 2007] \label{main3}
Any Banach space $E$ contains a subspace with a basis that is either tight
with constants or is locally minimal.
\end{thm}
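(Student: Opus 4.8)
The plan is to prove the 5th dichotomy by a Gowers-game style argument, building on the machinery already available. The dichotomy asserts that every Banach space $E$ contains a subspace with a basis that is either tight with constants or locally minimal; so passing to a subspace, we may assume $E$ itself has a basis $(e_n)$. The natural approach is to set up an infinite asymptotic game (a Gowers-type game) in which one player produces block sequences and the other responds with finite-dimensional subspaces together with candidate embeddings of bounded norm; then apply Gowers' determinacy theorem to obtain a dichotomy, and finally translate the two outcomes into "tight with constants" and "locally minimal" respectively.

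First I would fix, for each constant $K\in\N$, the set $\mathcal A_K$ of block sequences $(y_n)$ of $(e_n)$ such that the block subspace $[y_n]$ fails to be $K$-crudely finitely representable in some tail-type block subspace determined by a sequence of successive intervals — more precisely, I would encode the statement "$[y_n]\not\sqsubseteq_K[e_n\mid n\notin I_K]$ for a suitable choice of $I_0<I_1<\dots$" as an analytic (indeed, by a compactness/perturbation argument, essentially closed) set in the space of block sequences. The key combinatorial input is that crude finite representability with constant $K$ is witnessed on finite-dimensional subspaces, hence can be detected by a game of finite length at each stage; this is what lets the relevant set be in the $\sigma$-algebra to which Gowers' theorem applies. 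Then Gowers' dichotomy gives: either there is a block subspace $X=[x_n]$ all of whose further block sequences lie in $\mathcal A_K$ for every $K$ (after a diagonalisation over $K$), which one checks delivers a basis that is tight with constants; or there is a block subspace $Y$ and a winning strategy for the second player in the associated game, which one unpacks to say precisely that $Y$ embeds with a uniform constant $K_0$ into every block subspace, finite-dimensionally — i.e., $Y$ is locally minimal (here one also uses the standard fact that it suffices to test local minimality against block subspaces).

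The main obstacle will be the measurability/definability bookkeeping needed to legitimately invoke Gowers' determinacy theorem: "tightness with constants" quantifies over all choices of the interval sequence $I_0<I_1<\dots$ as well as all finite-dimensional subspaces and all embeddings of norm $\le K$, and one must arrange the game so that the target set is analytic (or can be sandwiched between an analytic and a coanalytic set that behave well under the theorem). I would handle this by the usual device of discretising the embeddings — replacing arbitrary operators of norm $\le K$ by a countable $\epsilon$-net of "rational" block operators — so that at each finite stage only finitely much information is revealed, and by a compactness argument reducing the infinite quantifier over interval sequences to a tree of finite conditions. A secondary, more routine difficulty is the diagonalisation across the constants $K$: one proves the stronger statement for each $K$ and then passes to a single subspace working for all $K$ simultaneously, using that a decreasing sequence of block subspaces has a common block subspace (a diagonal block sequence), and noting that local minimality with some constant is inherited appropriately. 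Once these definability points are settled, both translations — game-outcome to "tight with constants" and game-outcome to "locally minimal" — are straightforward unwindings of the definitions given in the excerpt.
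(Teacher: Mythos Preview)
The paper does not contain a proof of this statement: Theorem~\ref{main3} is quoted from the companion paper \cite{FR4} (see the sentence ``In \cite{FR4} three dichotomies for Banach spaces were obtained'' and the attribution in the theorem heading), and the present article only uses it as an input to the classification of Theorem~\ref{final}. So there is no proof here against which to compare your proposal.

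On the substance of your sketch: the overall architecture --- pass to a basis, encode the relevant condition as an analytic set of block sequences, apply a Gowers-type determinacy theorem, and diagonalise over the constant $K$ --- is indeed the template by which the dichotomies of \cite{FR4} are obtained, and your remarks about discretising embeddings and reducing to block subspaces are the correct auxiliary moves. One point is genuinely muddled, however. Your set $\mathcal A_K$ is described as ``block sequences $(y_n)$ such that $[y_n]$ fails to be $K$-crudely finitely representable in some tail-type block subspace determined by a sequence of successive intervals''; but tightness with constants is a property of the \emph{basis} relative to \emph{every} space $Y$, not a property of a single block sequence $(y_n)$ relative to a single choice of intervals. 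As written, your game has the quantifiers in the wrong place: you need the outcome ``all further block sequences lie in $\mathcal A_K$'' to yield, for \emph{every} $Y$, a sequence $I_0<I_1<\cdots$ with $Y\not\sqsubseteq_K[e_n: n\notin I_K]$, and the set you defined does not deliver that universal quantifier over $Y$. The actual game in \cite{FR4} is organised so that one player builds a candidate tight basis while the other attempts to produce $K$-embeddings of growing finite-dimensional pieces; the failure of a strategy for the embedding player is what yields the intervals $I_K$ uniformly in $Y$. This is a repairable imprecision rather than a fatal error, but the game you would actually need to run is not the one you wrote down.
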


\

Finally there exists a sixth
 dichotomy theorem due to A.
Tcaciuc \cite{T}, stated here in a slightly strengthened form. A space $X$ is 
 {\em uniformly inhomogeneous} when
$$\forall M\geq 1\;  \exists n \in \N\; \forall Y_1,\ldots,Y_{2n} \subseteq X\;
\exists y_i \in\ku S_{Y_i}\;(y_i)_{i=1}^n\not\sim_M(y_i)_{i=n+1}^{2n},$$
where $Y_1,\ldots,Y_{2n}$ are assumed to be infinite-dimensional subspaces of $X$.
On the contrary, a  basis $(e_n)$  is said to be {\em strongly asymptotically
$\ell_p$}, $1 \leq p \leq +\infty$, \cite{DFKO}, if there exists a constant $C$
and a function $f:\N \rightarrow \N$ such that for any $n$, any family of $n$
unit vectors which are disjointly supported in $[e_k \del k \geq f(n)]$ is
$C$-equivalent to the canonical basis of $\ell_p^n$.
Tcaciuc then proves \cite{T} :

\begin{thm}[Tcaciuc's dichotomy, 2005]
Any Banach space contains a subspace with a basis which is either uniformly
inhomogeneous or strongly asymptotically $\ell_p$ for some 
$1 \leq p \leq +\infty$.
\end{thm}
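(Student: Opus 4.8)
The plan is to adapt Tcaciuc's original argument from \cite{T} to yield the slightly strengthened form stated here, the strengthening being that in the uniformly inhomogeneous alternative the $2n$ subspaces $Y_1,\ldots,Y_{2n}$ range over all infinite-dimensional subspaces of $X$ rather than over block subspaces of the given basis only, and that the failure of $M$-equivalence is witnessed by a single pair of finite sequences. The core of the proof is a stabilization argument via Ramsey theory: we pass to a subspace on which a suitably coded asymptotic invariant is stabilized, and then show that stabilization forces the strong asymptotic $\ell_p$ conclusion, while failure to stabilize (in the appropriate uniform sense) is exactly the uniformly inhomogeneous conclusion.

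First I would introduce, for each $n$, the function measuring the worst-case equivalence constant between $n$-tuples drawn from $n$-tuples of infinite-dimensional subspaces: roughly,
$$
\delta_n(X)=\sup\Bigl\{M\geq 1 : \forall Y_1,\ldots,Y_{2n}\subseteq X\ \exists y_i\in \ku S_{Y_i}\ (y_i)_{i=1}^n\sim_M (y_i)_{i=n+1}^{2n}\Bigr\}^{-1},
$$
with the convention that $\delta_n(X)=0$ if no finite $M$ works. If for some subspace $X'\subseteq X$ we have $\delta_n(X')=0$ for every $n$, then $X'$ is uniformly inhomogeneous and we are done. So assume no such subspace exists; then by a diagonalization we may pass to a subspace $X'$ and fix, for each $n$, a finite $M_n$ witnessing that $\delta_n(X')>0$ persists in all further subspaces — i.e. the uniform-inhomogeneity alternative fails hereditarily with explicit constants. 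The goal becomes to extract from this hereditary homogeneity a strongly asymptotically $\ell_p$ basic sequence.

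Next I would run the asymptotic-structure / Ramsey stabilization. Working now with a basis $(e_n)$ of $X'$, consider for each $n$ the set of normalized $n$-tuples that are successive blocks supported far out on the basis; using the oscillation-stabilization principle for real-valued functions on block sequences (Gowers-type $\Delta$-stabilization, or the Krivine–Maurey / Milman-style argument underlying Tsirelson-type asymptotic structure), pass to a further block subspace on which, for each fixed $n$, all such far-out disjointly supported normalized $n$-tuples are $C_n$-equivalent to one fixed $n$-dimensional space $E_n$. The hereditary homogeneity hypothesis (the $M_n$'s) is what prevents this fixed space from depending on which $n$-tuple we chose, and it also forces compatibility as $n$ varies: the spaces $E_n$ must be ``consistent'', and the standard argument (essentially the fact that a space all of whose normalized disjoint $n$-tuples are uniformly equivalent, uniformly in $n$, must have them equivalent to $\ell_p^n$ or $c_0^n$ for a single $p$) identifies $E_n$ with $\ell_p^n$ for one fixed $p\in[1,\infty]$. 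One then reads off the function $f$ from the ``far out'' quantifier and the constant $C$ from $\sup_n C_n$, which is finite by the uniformity, yielding that $(e_n)$ restricted to this block subspace is strongly asymptotically $\ell_p$.

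The main obstacle I expect is controlling the quantifier order in the strengthening: Tcaciuc's dichotomy as originally phrased quantifies over block subspaces of a fixed basis, and getting the uniformly inhomogeneous side to quantify over \emph{all} infinite-dimensional subspaces $Y_i\subseteq X$ requires knowing that an arbitrary subspace contains, for every $\varepsilon$, a normalized vector $(1+\varepsilon)$-equivalent to a far-out block vector of the fixed basis — a standard small-perturbation/gliding-hump fact, but one that must be inserted carefully so that the finitely many perturbations across a $2n$-tuple do not degrade the constants beyond control. The second delicate point is the uniformity in $n$ in the stabilization: each single-$n$ stabilization costs a passage to a subspace, so one must diagonalize while keeping the constants $C_n$ bounded, and it is precisely the hereditary failure of uniform inhomogeneity (the fixed $M_n$'s surviving to all subspaces) that makes this diagonalization converge rather than drift. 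Everything else — the Ramsey/oscillation input, and the classical identification of a uniformly ``$1$-subsymmetric-across-all-disjoint-tuples'' structure with $\ell_p$ or $c_0$ — is quotable from the literature (\cite{T}, \cite{DFKO}, and the asymptotic-structure machinery).
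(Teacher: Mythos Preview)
The paper does not prove this theorem. It is quoted in Section~\ref{intro} as a result of Tcaciuc \cite{T}, with the remark that it is stated here in a slightly strengthened form; no argument is given, and the strengthening is not justified in this paper either (the relevant discussion is deferred to \cite{FR4}). There is therefore nothing in the paper to compare your proposal against.

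On the proposal itself: the overall architecture --- stabilise an asymptotic invariant by Ramsey/diagonalisation, then identify the stabilised $n$-dimensional structures with $\ell_p^n$ via a Krivine-type argument --- is indeed the shape of Tcaciuc's proof. But two points are off. First, your quantity $\delta_n(X)$ is ill-defined as written: since $M$-equivalence weakens as $M$ grows, the set $\{M\geq 1:\forall Y_i\ \exists y_i\ (y_i)_1^n\sim_M(y_i)_{n+1}^{2n}\}$ is upward-closed, hence its supremum is $+\infty$ whenever the set is nonempty; so $\delta_n(X)=0$ cannot encode what you want. You presumably meant an infimum, or the complementary relation $\not\sim_M$; in any case uniform inhomogeneity is a $\forall M\ \exists n$ statement, not a per-$n$ one, so a single $\delta_n$ cannot capture it. Second, and more substantively, the step where you pass from ``hereditary failure of uniform inhomogeneity'' to $\sup_n C_n<\infty$ is precisely the heart of Tcaciuc's argument and does not follow from the bare negation: the negation only furnishes, for each subspace, one $M$ and for each $n$ one particular tuple of subspaces on which all choices are $M$-equivalent, whereas strongly asymptotic $\ell_p$ demands a single constant working for \emph{all} disjointly supported $n$-tuples past $f(n)$, uniformly in $n$. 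Bridging this gap is a genuine stabilisation over both $n$ and the block subspace simultaneously, and your sketch gestures at it but does not supply it.
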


The six dichotomies and the interdependence of the properties involved can be
visualised in the following diagram.

\[
\begin{tabular}{ccc}
Strongly asymptotic $\ell_p$&$**\textrm{ Tcaciuc's dichotomy }**$&
Uniformly inhomogeneous\\

$\Downarrow$&         &$\Uparrow$\\

Unconditional basis&$**\textrm{ 1st dichotomy }**$& Hereditarily indecomposable\\

$\Uparrow$&         &$\Downarrow$\\

Tight by support & $**\textrm{ 2nd dichotomy }**$ & Quasi minimal  \\

$\Downarrow$&&$\Uparrow$\\

Tight by range    &      $**\textrm{ 4th dichotomy }**$          & Sequentially minimal     \\

$\Downarrow$&&$\Uparrow$\\

Tight&                  $**\textrm{ 3rd dichotomy }**$              & Minimal\\

$\Uparrow$&         &$\Downarrow$\\

Tight with constants& $**\textrm{ 5th dichotomy }**$ & Locally minimal\\
\end{tabular}
\]

\

Moreover, 
$${\rm Strongly\ asymptotic\ } \ell_p {\rm\ not\ containing\ } \ell_p. 1 \leq p <+\infty
\Rightarrow {\rm Tight\ with\ constants},$$
and
$${\rm Strongly\ asymptotic\ } \ell_{\infty} 
\Rightarrow {\rm Locally\ minimal}.$$

\

Note that while a basis tight by support must be unconditional, a basis which is tight by range may span a HI space. So tightness by support and tightness by range are two different notions. We would lose this subtle difference if we required the sets $I_n$ to be intervals in the definition of tightness.
Likewise a basis may be tight by range without being (nor containing a basis which is) tight with constants, and tight with constants without being (nor containing a basis which is) tight by range. Actually none of the converses of the implications appearing on the left or the right of the list of  the six dichotomies holds, even if one allows passing to a further subspace. All the claims of this paragraph are easily checked by looking at the list of examples of Theorem \ref{final}, which is the aim of this paper.

The fact that a strongly asymptotically $\ell_p$ space not containing $\ell_p$ must be tight with constants is proved in \cite{FR4} but is essentially due to the authors of \cite{DFKO}, and
 the observation that such bases are unconditional  may also be found in  \cite{DFKO}. The easy fact that HI spaces are uniformly homogeneous (with $n=2$ in the definition) is observed in \cite{FR4}. That HI spaces are quasi-minimal  is due to Gowers \cite{g:dicho}, and that minimal spaces are locally minimal is a consequence of an observation by P. G. Casazza \cite{C} that every minimal space must $K$-embed into all its subspaces for some $K \geq 1$.
The other implications are direct consequences of the definitions, and more explanations and details may be found in \cite{FR4}.

\

\subsection{The list of 19 inevitable classes} Combining the six dichotomies and the relations between them, the following
list of 19 classes of Banach spaces contained in any Banach space is obtained
in \cite{FR4}:

\begin{thm}[Ferenczi - Rosendal 2007]\label{final} Any infinite dimensional Banach space contains a subspace of one of
the types listed in the following chart:
\begin{center}
  \begin{tabular}{|l|l|l|}

    \hline

    Type       & Properties                              & Examples                                  \\

    \hline

    (1a)                     & HI, tight by range and with constants    &   ?\\

    (1b)                   & HI, tight by range, locally minimal      & $G^*$\\

    \hline

    (2) & HI, tight, sequentially minimal     &   ? \\

    \hline

    (3a) & tight by support and with constants, uniformly inhomogeneous & ? \\

 (3b) & tight by support, locally minimal, uniformly inhomogeneous & $G_u^*$ \\

 (3c) & tight by support, strongly asymptotically
                             $\ell_p$, $1 \leq p <\infty$ & $X_u, X_{abr}$ \\

(3d) & tight by support, strongly asymptotically
                             $\ell_{\infty}$ & $X_u^*$  \\

    \hline

(4) & unconditional basis, quasi minimal, tight by range & ? \\

\hline

(5a) & unconditional basis, tight with constants, sequentially minimal,  & ? \\

     & uniformly inhomogeneous &    \\

(5b) & unconditional basis, tight, sequentially and locally minimal,
& ? \\

     & uniformly inhomogeneous &    \\

(5c) & tight with constants, sequentially minimal, & $T$, $T^{(p)}$ \\

     &  strongly asymptotically $\ell_p$, $1 \leq p<\infty$ &   \\

(5d) & tight, sequentially minimal, strongly asymptotically $\ell_{\infty}$ & ?\\

\hline

(6a) & unconditional basis, minimal, uniformly inhomogeneous & $S,S^*$ \\

(6b) & minimal, reflexive, strongly asymptotically $\ell_{\infty}$ & $T^*$\\

(6c) & isomorphic to $c_0$ or $l_p$, $1 \leq p<\infty$ & $c_0$, $\ell_p$\\

\hline
  \end{tabular}

\end{center}
\end{thm}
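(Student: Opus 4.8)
The idea is to obtain the classification by applying the six dichotomy theorems one after another, each time passing to a suitable block subspace, and then eliminating the combinations of properties that the implications in the diagram preceding the statement forbid; the whole proof is a finite decision tree whose leaves are the listed types. Two features of the diagram do most of the pruning: a strongly asymptotically $\ell_p$ space that does not contain $\ell_p$ ($1\leq p<\infty$) is already tight with constants, and a strongly asymptotically $\ell_\infty$ space is already locally minimal, so on the branches where Tcaciuc's dichotomy lands on a strongly asymptotic side the fifth dichotomy is not needed. Whenever a dichotomy requires a basis we first pass to a subspace with a basis, which preserves being HI and having an unconditional basis.

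First I would apply Gowers' first dichotomy, separating an HI subspace from a subspace with an unconditional basis. On the HI branch the space is automatically quasi minimal (Gowers) and uniformly inhomogeneous (noted in \cite{FR4}), and it is not tight by support since it has no unconditional basis; moreover no HI space has a minimal subspace. This last fact --- the first place where genuine work is needed --- follows from the Gowers--Maurey description of operators on an HI space: if $Y$ is an infinite-dimensional, infinite-codimensional subspace of an HI space $X$, then any into-isomorphism $T\colon X\to X$ with range inside $Y$ is of the form $\lambda I+S$ with $S$ strictly singular; as $\lambda=0$ would make $T$ nowhere an isomorphism, $\lambda\neq 0$, so $T$ is a strictly singular perturbation of $\lambda I$, hence Fredholm of index $0$ and, being injective, surjective, contradicting $T(X)\subseteq Y$. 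Because of this, on the HI branch the third dichotomy always yields the tight alternative. The branch then closes: the fourth dichotomy gives tight by range --- split by the fifth dichotomy into types (1a) and (1b) --- or sequentially minimal, in which case the third dichotomy gives a tight subspace and we are in type (2).

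On the unconditional branch I would apply the second dichotomy. If it yields a tight-by-support subspace, that subspace is also tight by range and tight, and Tcaciuc's dichotomy finishes: strongly asymptotically $\ell_\infty$ gives type (3d); strongly asymptotically $\ell_p$ with $p<\infty$ cannot contain $\ell_p$, since a tight space (tightness being hereditary) has no minimal subspace whereas $\ell_p$ is minimal, so it is tight with constants, type (3c); and uniformly inhomogeneous is split by the fifth dichotomy into types (3a) and (3b). If the second dichotomy yields a quasi minimal subspace, I apply the fourth dichotomy: the tight-by-range alternative is type (4), and on the sequentially minimal alternative I apply Tcaciuc's dichotomy and, where still needed, the third and fifth dichotomies. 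In the strongly asymptotically $\ell_p$ ($p<\infty$) case the space either contains $\ell_p$, so that $E$ has a subspace isomorphic to $\ell_p$ (type (6c)), or it does not and is then tight with constants (type (5c)). In the strongly asymptotically $\ell_\infty$ case the space is locally minimal and the third dichotomy gives a tight subspace (type (5d)) or a minimal subspace; the latter, if it contains $c_0$, contains a copy of $c_0$ (type (6c)), and otherwise contains neither $c_0$ nor $\ell_1$ (a minimal space containing $\ell_1$ would embed into that copy, hence be isomorphic to $\ell_1$, which is not strongly asymptotically $\ell_\infty$), so, having an unconditional basis, is reflexive (type (6b)). In the uniformly inhomogeneous case the third dichotomy gives a tight subspace --- split by the fifth dichotomy into types (5a) and (5b) --- or a minimal subspace, which is locally minimal and gives type (6a).

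Putting the branches together exhausts the chart, each type arising along a single path. The bulk of the remaining work is bookkeeping: along every path one must check that the accumulated properties are exactly those of some listed type and that each further passage to a subspace preserves them, which is where one uses that tightness (in all its forms), uniform inhomogeneity, having an unconditional basis, being strongly asymptotically $\ell_p$, and the strengthened minimality notions (sequential minimality, and the hereditary forms of quasi minimality) all pass to block subspaces, and that a minimal space always contains a minimal subspace with an unconditional basis. The main obstacle, and the only genuinely non-formal ingredients, are the incompatibilities --- that no HI space is minimal, that a tight space contains no isomorphic copy of $c_0$ or of $\ell_p$, and the reflexivity of the minimal strongly asymptotically $\ell_\infty$ spaces that omit $c_0$ --- together with the discipline of keeping every property in hereditary (and, for the spaces produced by Tcaciuc's dichotomy, block) form, so that the subspace reached at the end of a branch really satisfies all of the listed conditions simultaneously. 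All of these are available from \cite{FR4} and its references, and the theorem follows by assembling them along the tree.
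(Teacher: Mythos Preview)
Your proof is correct and carries out precisely what the paper indicates: the theorem is quoted from \cite{FR4}, and the present paper gives no argument beyond the remark that it follows by ``combining the six dichotomies and the relations between them'', which is exactly the decision tree you build. The one nontrivial incompatibility you isolate and justify---that no HI space has a minimal subspace, via the Gowers--Maurey structure of operators on an HI space---is indeed what is needed to close the HI branch and is not spelled out in this paper.
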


\

The class of type (2) spaces may be divided into two subclasses, using the 5th dichotomy, and the class of type (4) into four, using the 5th and the 6th dichotomy, giving a total of 19 inevitable classes. Since we know of no example of a type (2) or type (4) space to begin with, we do not write down the list of  possible subclasses of these two classes, leaving this as an exercise to the interested reader. 

Note that the tightness property may be used to obtain lower bounds of complexity for the relation of isomorphism between subspaces of a given Banach space. This was initiated by B. Bossard \cite{Bo} who used Gowers' space $G_u$ and its tightness by support. Other results in this direction may be found in \cite{FR4}. We also refer to  \cite{ergodic} for a more introductory work to this question.

\

In \cite{FR4} the existence of $X_u$ and the properties of  $S$, $G$, $G_u$ and $X_u$ which appear in the chart  and are mentioned without proof. It is the
main objective of this paper to prove the results about the spaces which
appear in the above chart.


\

So in what follows  various (and for some of
them new) examples of ``pure'' tight spaces are analysed combining
some of the properties of tightness or minimality associated to each dichotomy.
We shall provide several examples of tight spaces from the two main families of exotic
Banach spaces: spaces of the type of Gowers and Maurey \cite{GM} and spaces of the type of
Argyros and Deliyanni \cite{AD}. Recall that  both types of spaces are defined
using a coding procedure to
``conditionalise'' the norm of some ground space  defined by induction.
In spaces of the type of Gowers and Maurey,  the ground space is the space $S$
of Schlumprecht, and in spaces of the type of Argyros and Deliyanni, it is
a mixed (in further versions  modified or partly modified)  Tsirelson space associated to the  sequence of
Schreier  families.
The space $S$ is far from being asymptotic $\ell_p$ and is actually uniformly
inhomogeneous, and this is the case for our examples of the type
of Gowers-Maurey as well. On the other hand, we  use  a space in the second
family, inspired by an example of Argyros, Deliyanni, Kutzarova and
Manoussakis \cite{ADKM}, to produce strongly asymptotically $\ell_1$ and
$\ell_{\infty}$  examples with strong tightness properties.

\section{Tight  unconditional spaces of the type of Gowers and Maurey}

In this section we prove that the dual of the type (3) space $G_u$ constructed
by Gowers in \cite{g:hyperplanes} is locally minimal  of type (3),
 that Gowers' hereditarily indecomposable and asymptotically unconditional
 space $G$ defined in \cite{g:asymptotic} is of type (1), and that its dual
$G^*$ is locally minimal of type (1). These spaces are
 natural variations on Gowers and Maurey's space $GM$, and so familiarity with
 that construction will be assumed: we shall not redefine the now classical
 notation relative to $GM$, such as the sets of integers $K$ and $L$,  rapidly increasing sequences (or R.I.S.), the
 set ${\bf Q}$ of functionals, special functionals, etc., instead we shall try to give details on the
 parts in which $G_u$ or $G$ differ from $GM$.

The idea of the proofs is similar to \cite{g:hyperplanes}. The HI property for
Gowers-Maurey's spaces is obtained as follows. Some vector $x$ is constructed
such that $\norm{x}$ is large, but so that if $x'$ is obtained from $x$ by
changing signs of the components of $x$, then $x^*(x')$ is small for any
norming functional $x^*$, and so $\norm{x'}$ is small. The upper bound for
$x^*(x')$ is obtained by a combination of unconditional estimates (not
depending on the signs)  and of conditional estimates (i.e., based on the fact
that $|\sum_{i=1}^n \epsilon_i|$ is much smaller than $n$ if
$\epsilon_i=(-1)^i$ for all $i$).

For our examples we shall need to prove that some operator $T$ is
unbounded. Thus we shall construct a vector $x$ such that say $Tx$ has large
norm, and such that $x^*(x)$ is small for any norming $x^*$. The upper bound
for $x^*(x)$ will be obtained by the same unconditional estimates as in the HI
case, while conditional estimates will be trivial due to disjointness of
 supports of the corresponding component vectors and functionals.
The method will be  similar for the dual spaces.

\

Recall that if $X$ is a space with a bimonotone basis, an
$\ell_{1+}^n$-average with constant $1+\epsilon$ is a
normalised vector of the form $\sum_{i=1}^n x_i$, where $x_1<\cdots<x_n$ and
$\norm{x_i} \leq \frac{1+\epsilon}{n}$ for all $i$.
An $\ell_{\infty+}^n$-average with constant $1+\epsilon$  is a normalised vector of the form
 $\sum_{i=1}^n x_i$, where $x_1<\cdots<x_n$ and
$\norm{x_i} \geq \frac{1}{1+\epsilon}$ for all $i$.
An $\ell_{1+}^n$-vector (resp. $\ell_{\infty+}^n$-vector) is a non zero
multiple of an $\ell_{1+}^n$-average (resp. $\ell_{\infty+}^n$-average).
The function $f$ is defined by $f(n)=\log_2(n+1)$. The space $X$ is said to satisfy a lower $f$-estimate if for any
$x_1<\cdots<x_n$,
$$\frac{1}{f(n)}\sum_{i=1}^n \norm{x_i} \leq \norm{\sum_{i=1}^n x_i}.$$

\begin{lemme}\label{linftyn}
Let $X$ be a reflexive space with a bimonotone basis and satisfying a lower $f$-estimate.
Let $(y_k^*)$ be a normalised  block sequence of $X^*$, $n \in \N$, $\epsilon,\alpha>0$.
 Then there
exists a constant $N(n,\epsilon)$, successive subsets $F_i$ of
 $[1,N(n,\epsilon)]$, $1 \leq i \leq n$, and $\lambda>0$ such that if
$x_i^*:=\lambda \sum_{k \in F_i} y_k^*$ for all $i$, then   $x^*=\sum_{i=1}^n
x_i^*$ \ is an $\ell_{\infty +}^n$- average with constant $1+\epsilon$.
 Furthermore, if for each $i$, $x_i$ is such that $\norm{x_i} \leq 1$,
${\rm range}\;x_i \subseteq {\rm range}\;x_i^*$ and $x_i^*(x_i) \geq \alpha \norm{x_i^*}$, then
 $x=\sum_{i=1}^n x_i$ is an $\ell_{1+}^n$-vector with constant
 $\frac{1+\epsilon}{\alpha}$ such that $x^*(x) \geq \frac{\alpha}{1+\epsilon}\norm{x}$.
\end{lemme}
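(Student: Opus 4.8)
The statement naturally splits into two parts, and I would treat them separately. For the first part — the construction of the $\ell_{\infty+}^n$-average $x^*$ — the idea is to work in the dual. Since $X$ is reflexive with a bimonotone basis, $X^*$ has a bimonotone basis (the coordinate functionals) and $(y_k^*)$ is a normalized block sequence of it; moreover a short duality computation turns the lower $f$-estimate on $X$ into an \emph{upper} $f$-estimate on $X^*$: given successive $x_1^*<\cdots<x_m^*$ in $X^*$ and $x\in X$ with $\norm x\leq 1$, one restricts $x$ to each ${\rm range}\,x_i^*$, obtaining successive vectors $x_i$ with $\norm{x_i}\leq 1$ and $\sum_i\norm{x_i}\leq f(m)\norm{\sum_i x_i}\leq f(m)\norm x\leq f(m)$ by the lower $f$-estimate, whence $\big(\sum_i x_i^*\big)(x)=\sum_i x_i^*(x_i)\leq\big(\max_i\norm{x_i^*}\big)\sum_i\norm{x_i}\leq f(m)\max_i\norm{x_i^*}$, and taking the supremum over $x$ gives $\norm{\sum_i x_i^*}\leq f(m)\max_i\norm{x_i^*}$; in particular $\norm{\sum_{k=1}^N y_k^*}\leq f(N)$ for every $N$. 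Now I would invoke the standard Gowers--Maurey/Schlumprecht lemma on $\ell_{\infty+}^n$-averages \cite{GM,S1}: in a space with a bimonotone basis in which norms of sums of $N$ successive normalized blocks are $\leq f(N)$, every normalized block sequence contains, for each $n$ and $\epsilon>0$, a normalized $\ell_{\infty+}^n$-average with constant $1+\epsilon$ of the form $\lambda\sum_{k\in F}y_k^*$ with $F\subseteq[1,N(n,\epsilon)]$, its $n$ pieces being $x_i^*=\lambda\sum_{k\in F_i}y_k^*$ over successive subsets $F_1<\cdots<F_n$ of $F$. Here $\lambda=1/\norm{\sum_{k\in F}y_k^*}$, and $\tfrac1{1+\epsilon}\leq\norm{x_i^*}\leq 1$ for each $i$ by bimonotonicity.

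For the second part, fix $x_1,\dots,x_n$ with $\norm{x_i}\leq 1$, ${\rm range}\,x_i\subseteq{\rm range}\,x_i^*$ and $x_i^*(x_i)\geq\alpha\norm{x_i^*}$, and put $x=\sum_{i=1}^n x_i$. Since the $F_i$ are successive, the ranges of the $x_i^*$, and hence of the $x_i$, are pairwise disjoint, so $x_i^*(x_j)=0$ for $i\neq j$ and therefore $x^*(x)=\sum_i x_i^*(x_i)\geq\alpha\sum_i\norm{x_i^*}\geq\tfrac{n\alpha}{1+\epsilon}$. On the other hand $\norm x\leq\sum_i\norm{x_i}\leq n$, while $\norm x\geq x^*(x)\geq\tfrac{n\alpha}{1+\epsilon}$ since $\norm{x^*}=1$. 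Combining these, $x^*(x)\geq\tfrac{n\alpha}{1+\epsilon}\geq\tfrac{\alpha}{1+\epsilon}\norm x$, which is the last assertion; and from $\tfrac{\norm x}{n}\geq\tfrac{\alpha}{1+\epsilon}$ we get $\norm{x_i}\leq 1\leq\tfrac{(1+\epsilon)/\alpha}{n}\norm x$ for every $i$, so writing $x/\norm x=\sum_i x_i/\norm x$ exhibits $x$ as an $\ell_{1+}^n$-vector with constant $\tfrac{1+\epsilon}{\alpha}$ (note $x\neq 0$ since $x^*(x)>0$).

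The only substantive step is the cited $\ell_{\infty+}^n$-average lemma and, within it, the point that $N(n,\epsilon)$ may be chosen depending on $n$ and $\epsilon$ alone: this is forced by $f(N)=\log_2(N+1)$ being slowly varying, since iterating the upper estimate along windows of lengths $1,n,n^2,\dots$ would otherwise make $\norm{\sum_{k=1}^{n^j}y_k^*}$ grow geometrically in $j$ while remaining $\leq f(n^j)=O(j)$, so a ``flat'' window of controlled length must appear. This extraction is the technical heart, but it is a by-now-routine argument in the theory of these spaces, so I would simply quote it; everything else reduces to the duality remark and the elementary estimates above.
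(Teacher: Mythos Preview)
Your proposal is correct and follows essentially the same route as the paper. The paper also first dualises the lower $f$-estimate to get $\norm{\sum_i x_i^*}\leq f(m)\max_i\norm{x_i^*}$, then runs the standard ``geometric versus $f$-growth'' argument (taking $N=n^k$ with $(1+\epsilon)^k>f(n^k)$ and iterating the failure of the $\ell_{\infty+}^n$-average condition down to a single $y_k^*$) to extract the $\ell_{\infty+}^n$-average, and finishes with exactly your computations for the $\ell_{1+}^n$-vector part. The only difference is that the paper writes out the extraction argument explicitly rather than citing it, but your final paragraph sketches precisely that argument.
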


\begin{proof}
Since $X$ satisfies a lower $f$-estimate, it follows by duality that any
sequence of successive functionals $x_1^*<\cdots<x_n^*$ in $G_u^*$ satisfies the following
upper estimate:
$$1 \leq \norm{\sum_{i=1}^n x_i^*} \leq f(n) \max_{1 \leq i \leq n}\norm{x_i^*}.$$
Let $N=n^k$ where $k$ is such that $(1+\epsilon)^k>
  f(n^k)$. Assume towards a contradiction that the result is false for
  $N(n,\epsilon)=N$, then
$$y^*=(y_1^*+\ldots+y_{n^{k-1}}^*)+\ldots+(y_{(n-1)n^{k-1}+1}^*+\ldots+y_{n^k}^*)$$
is not an $\ell_{\infty +}^n$-vector with constant $1+\epsilon$,  and therefore, for some $i$,
$$\norm{y_{i n^{k-1}+1}^*+\ldots+y_{(i+1)n^{k-1}}^*} \leq \frac{1}{1+\epsilon}\norm{y^*}.$$
Applying the same reasoning to the above sum instead of $y^*$, we obtain,
for some $j$,
$$\norm{y_{j n^{k-2}+1}^*+\ldots+y_{(j+1)n^{k-2}}^*} \leq \frac{1}{(1+\epsilon)^2}\norm{y^*}.$$
By induction we obtain that
$$1 \leq \frac{1}{(1+\epsilon)^k} \norm{y^*}
\leq \frac{1}{(1+\epsilon)^k} f(n^k),$$
a contradiction.

Let therefore $x^*$ be such an $\ell_{\infty +}^n$-average with constant
$1+\epsilon$ of the form $\sum_i x_i^*$.
Let  for each $i$, $x_i$ be  such that $\norm{x_i} \leq 1$,
${\rm range}\ x_i \subseteq {\rm range}\ x_i^*$ and $x_i^*(x_i) \geq \alpha \norm{x_i^*}$.
 Then
$$\norm{\sum_i x_i} \geq x^*(\sum_i x_i) \geq \alpha (\sum_i \|x_i^*\|) \geq
\frac{\alpha n}{1+\epsilon},$$ and in particular for each $i$,
$$\norm{x_i} \leq  1 \leq \frac{1+\epsilon}{\alpha n}\norm{\sum_i x_i},$$
so $\sum_i x_i$ is a $\ell_{1+}^n$-vector with constant $\frac{1+\epsilon}{\alpha}$. We also obtain that
$$x^*(\sum_i x_i) \geq
\frac{\alpha n}{1+\epsilon} \geq \frac{\alpha}{1+\epsilon}\norm{\sum_i x_i},$$
as required.
\end{proof}

The following lemma is fundamental and therefore worth stating explicitly. It
appears for example as Lemma 4 in \cite{g:asymptotic}. Recall that an
$(M,g)$-form is a functional of the form $g(M)^{-1}(x_1^*+\ldots+x_M^*)$, with
$x_1^*<\cdots<x_M^*$ of norm at most $1$.

\begin{lemme}[Lemma 4 in \cite{g:asymptotic}]\label{fundamental}
 Let $f,g\in\mathcal F$ with $g\geq\sqrt f$, let $X$ be a space with a bimonotone basis
satisfying a lower $f$-estimate, let $\epsilon>0$ and $\epsilon'=\min\{\epsilon,1\}$, let $x_1,\ldots,x_N$ be a R.I.S. in X
for $f$ with constant $1+\epsilon$ and let $x=\sum_{i=1}^Nx_i$. Suppose that
$$\norm{Ex}\leq\sup\Bigl\{|x^*(Ex)|:M\geq 2, x^*\ \hbox{is an $(M,g)$-form}
\Bigr\}$$
for every interval $E$ such that $\norm{Ex}\ge 1/3$. Then $\norm
x\leq(1+\epsilon+\epsilon')Ng(N)^{-1}$.
 \end{lemme}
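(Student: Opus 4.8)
The plan is to argue by induction on $N$, reducing the whole estimate to the action of $(M,g)$-forms on $x$ and then exploiting the spread-out structure of a R.I.S. First I would dispose of a trivial case: if $\norm x<1/3$ there is nothing to prove, since $t\mapsto t/g(t)$ is nondecreasing and $g(1)=1$, so $(1+\epsilon+\epsilon')N/g(N)\geq 1>\norm x$. Otherwise, applying the hypothesis to the interval $E$ equal to the range of $x$ produces, for each $\delta>0$, an $(M,g)$-form $x^*=g(M)^{-1}(x_1^*+\cdots+x_M^*)$ with $M\geq 2$ and $x^*(x)>\norm x-\delta$, the $x_j^*$ being successive of norm at most $1$; it thus suffices to bound $g(M)^{-1}\sum_{j=1}^M x_j^*(x)$, uniformly over such forms, by $(1+\epsilon+\epsilon')N/g(N)$ up to $\delta$. (One uses here that, dually to the lower $f$-estimate on $X$ as in the opening lines of Lemma~\ref{linftyn}, $\norm{x_1^*+\cdots+x_M^*}\leq f(M)$.) Note also that if the whole range of $x$ lies inside a single $\mathrm{range}\,x_j^*$, then $\sum_j x_j^*(x)=x_j^*(x)$, so $\norm x-\delta<g(M)^{-1}\norm x$ and $\norm x<\delta/(1-g(2)^{-1})$, contradicting $\norm x\geq 1/3$ once $\delta$ is small; hence we may assume this does not occur.

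The heart of the argument is to split $x$ along the supports of the functionals. Put $E_j=\mathrm{range}\,x_j^*$, pairwise disjoint successive intervals, so that $|\sum_j x_j^*(x)|\leq\sum_{j=1}^M\norm{P_{E_j}x}$ where $P_{E_j}$ is the basis projection onto $E_j$. Separate the R.I.S. vectors into those whose range is contained in some (necessarily unique) $E_j$ — the \emph{block members} — and those whose range straddles a boundary of some $E_j$ — the \emph{straddlers} (members sitting in a gap between two $E_j$'s contribute nothing). For a fixed $j$, the block members inside $E_j$ form a consecutive sub-R.I.S. of some length $\ell_j$ which inherits the hypothesis of the lemma (take for $E$ the interval spanned by those members), and $\ell_j<N$ by the previous paragraph, so the induction hypothesis gives $\norm{P_{E_j}x^{\mathrm{block}}}\leq(1+\epsilon+\epsilon')\ell_j/g(\ell_j)$ where $x^{\mathrm{block}}$ is the sum of the block members; here $\sum_j\ell_j\leq N$ and there are at most $\min\{M,N\}$ nonzero $\ell_j$. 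For a straddler $x_i$, an $\ell_{1+}^{n_i}$-average with constant $1+\epsilon$, one uses the elementary fact that such an average has total variation $\sum_k\norm{E'_k x_i}\leq(1+\epsilon)(1+2b_i/n_i)$ over any successive intervals $E'_1<\cdots$ crossing $b_i$ boundaries; since $\sum_i b_i\leq M-1$ and, by the rapid-increase property of a R.I.S. (the parameters obey $n_{i+1}\geq n_i^2$), all but an initial segment of length $O(\log\log M)$ have $n_i$ far larger than $M$, the straddlers contribute at most $(1+\epsilon)N_2$ plus a negligible error, where $N_2$ is their number; the short initial segment of small-parameter members is absorbed separately via the induction hypothesis applied to that sub-R.I.S.

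Adding up, $g(M)^{-1}|\sum_j x_j^*(x)|\leq g(M)^{-1}\big[(1+\epsilon+\epsilon')\sum_j \ell_j/g(\ell_j)+(1+\epsilon)N_2\big]+(\text{error})$, with $N_1+N_2\leq N$ where $N_1=\sum_j\ell_j$. I would now split on $M$: if $M\geq N$, then $g(M)^{-1}\leq g(N)^{-1}$, each nonzero $\ell_j\geq1$ forces $\sum_j \ell_j/g(\ell_j)\leq N_1$ (there being at most $N_1$ nonzero terms and $t/g(t)/t=1/g(t)\leq1$), and the bound becomes $\leq (1+\epsilon+\epsilon')(N_1+N_2)/g(N)\leq(1+\epsilon+\epsilon')N/g(N)$; if $M<N$ one observes (after discarding gap-members, which only help) that $N_1+N_2=N$ and $N_2\leq M-1$ force $M\geq(N+1)/2$, so $g(M)$ is within an additive constant of $g(N)$, and combining the concavity of $t\mapsto t/g(t)$, Jensen's inequality on the $\ell_j$, and the submultiplicativity $g(M)g(N/M)\geq g(N)$ of $\mathcal F$-functions (with the small values of $N$ treated as base cases) again yields $(1+\epsilon+\epsilon')N/g(N)$, the factor $\epsilon'=\min\{\epsilon,1\}$ being exactly what is needed to soak up the straddler slack and the initial-segment error.

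The main obstacle is precisely this last bookkeeping of constants. The genuinely delicate points are: showing an interval-piece $P_{E_j}x$ really does behave like a shorter R.I.S. (the two partial end-vectors must be shown harmless, via the $\ell_{1+}$-average estimate or the usual "almost R.I.S." conventions); handling the R.I.S. members whose parameter $n_i$ is \emph{not} large compared with $M$ — a short initial segment by rapid increase, but one on which neither the $\ell_{1+}$-average estimate nor a crude $f(M)$-bound is good enough, so that one must feed it back into the induction through the sub-interval hypothesis; and making every multiplicative constant coming from the straddlers, the at-most-two-per-functional overcounting, and the case $M<N$ collapse below $(1+\epsilon+\epsilon')$. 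Getting these three things to cohere, rather than any individual estimate, is where the work lies.
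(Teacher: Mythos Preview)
The paper does not prove this lemma: it is quoted as ``Lemma 4 in \cite{g:asymptotic}'' and used as a black box thereafter. So there is no proof here to compare against; the relevant benchmark is Gowers' original argument (itself a variant of Gowers--Maurey, Lemma~4).

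That argument is not an induction on $N$. Given an $(M,g)$-form $x^*=g(M)^{-1}\sum_{j\leq M}x_j^*$, one bounds $|x^*(Ex)|\leq g(M)^{-1}\sum_i\sum_j\|E_jx_i\|$ with $E_j=\mathrm{range}\,x_j^*$ and then treats each $x_i$ individually according to the size of its parameter $n_i$ relative to $M$. For $n_i\geq M$ the $\ell_{1+}^{n_i}$-average estimate gives $\sum_j\|E_jx_i\|\leq(1+\epsilon)(1+2M/n_i)$; the rapid-increase condition guarantees that at most one index $t$ has $n_t$ in the critical range, and the R.I.S. support condition together with $g\geq\sqrt f$ makes the total contribution of the small-parameter tail $x_1,\ldots,x_{t-1}$ negligible against $g(M)$. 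The single exceptional index $t$, bounded crudely, is exactly what produces the extra $\epsilon'$. No sub-R.I.S., no induction, no Jensen.

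Your inductive scheme, as written, has two concrete problems beyond the constant-tracking you already flag. First, the assertion that ``$N_1+N_2=N$ and $N_2\leq M-1$ force $M\geq(N+1)/2$'' is false: take $M=2$ and $E_1,E_2$ two long intervals; all $N$ R.I.S. members can be block members (say half in each), so $N_2=0$ while $M=2$ is arbitrarily small compared with $N$. Without a lower bound on $M$ in this branch you have no way to relate $g(M)^{-1}$ to $g(N)^{-1}$, and the induction does not close. Second, for the functions at hand (e.g.\ $g=\sqrt f$ with $f(t)=\log_2(t+1)$) the map $t\mapsto t/g(t)$ is \emph{convex}, not concave, so your Jensen step on the $\ell_j$ points the wrong way. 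These are structural gaps, not mere bookkeeping; the direct argument in \cite{g:asymptotic} avoids both by never comparing $g$ at two scales through convexity and by isolating a single bad index rather than recursing.
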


\subsection{A locally minimal space tight by support}
Let $G_u$ be the space defined in \cite{g:hyperplanes}. This space has a suppression
unconditional basis,  is tight by
support and therefore reflexive, and its norm is given by the following
implicit equation, for all $x \in c_{00}$:

$$\norm{x}=\norm{x}_{c_0}\vee\ \sup\Bigl\{f(n)^{-1}\sum_{i=1}^n\norm{E_i x}\Del 2 \leq n, E_1<\ldots<E_n\Bigr\}$$
$$\vee\ \sup\Bigl\{|x^*(x)|\Del k \in K, x^* \hbox{ special of length } k \Bigr\}$$

where $E_1, \ldots, E_n$ are successive
subsets (not necessarily intervals)  of $\N$.

\begin{prop}\label{gu} The dual $G_u^*$ of $G_u$ is tight by support and locally minimal.
\end{prop}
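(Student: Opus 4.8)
The plan is to establish two separate claims: that $G_u^*$ is tight by support, and that $G_u^*$ is locally minimal. For the first claim, recall that tightness by support is equivalent to the statement that no two disjointly supported block subspaces are isomorphic; and since $G_u$ has an unconditional basis, the biorthogonal functionals $(e_n^*)$ form an unconditional basis of the reflexive space $G_u^*$, with the property that the support of a functional on $(e_n^*)$ coincides with (the relevant notion of) support. The key point is that disjointness of supports is a symmetric notion between a space and its dual: if $Y=[y_n]$ and $Z=[z_n]$ are disjointly supported block subspaces of $G_u^*$ and $T\colon Y\to Z$ is an isomorphism, then by reflexivity and unconditionality one can transpose to get disjointly supported block subspaces of $G_u$ that are isomorphic (or more directly, run the Gowers--Maurey-style argument directly in $G_u^*$). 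So the first half should follow either by a direct duality argument from the fact that $G_u$ is tight by support, or by repeating the HI-style construction — as outlined in the paragraph before Lemma~\ref{linftyn} — directly in $G_u^*$: build a vector on which a purported isomorphism between disjointly supported subspaces is large in norm, while showing any norming functional evaluates small on it, using unconditional estimates plus the triviality of conditional estimates coming from disjoint supports.

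For local minimality of $G_u^*$, I would show that $G_u^*$ is $K$-crudely finitely representable in each of its block subspaces, for some universal $K$. The strategy is to use Lemma~\ref{linftyn}: since $G_u$ satisfies a lower $f$-estimate (this is built into its norm via the $f(n)^{-1}\sum\norm{E_ix}$ term), given any normalised block sequence $(y_k^*)$ of $G_u^*$ and any $n$, Lemma~\ref{linftyn} produces $\ell_{\infty+}^n$-averages with constant close to $1$ supported on $(y_k^*)$. This gives good copies of $\ell_\infty^n$ uniformly inside every block subspace of $G_u^*$. The point is that in $G_u^*$ the $\ell_\infty^n$'s are "generic" — they appear on arbitrary block sequences — and one expects that an arbitrary finite-dimensional subspace of $G_u^*$ embeds with uniformly bounded distortion into a span of suitably chosen $\ell_{\infty+}$-averages, hence into any block subspace. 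Concretely, a finite-dimensional $F\subseteq G_u^*$ may, after a small perturbation, be taken to be a block subspace; then one replaces each basis vector $e_n^*$ in the (finite) support by a normalised $\ell_{\infty+}$-average far out on the target block sequence, scaled appropriately, and checks via the lower $f$-estimate (and its dual upper estimate, as in the proof of Lemma~\ref{linftyn}) that this map and its inverse are bounded by a constant independent of $F$.

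I expect the main obstacle to be the local minimality estimate: verifying that the substitution of $\ell_{\infty+}$-averages for basis vectors is an embedding with constant independent of $n$ and of the finite-dimensional subspace. The upper estimate is the easy direction — it follows from the lower $f$-estimate of $G_u$ dualised, as already spelled out at the start of the proof of Lemma~\ref{linftyn}, giving $\norm{\sum x_i^*}\le f(m)\max\norm{x_i^*}$ — but one must be careful that $f(m)$ does not blow up; this is controlled precisely because the $\ell_{\infty+}$ structure means the relevant sums behave like $c_0$-sums, and the $\norm{\cdot}_{c_0}$ term in the norm of $G_u$ provides the matching lower bound on the dual side. The delicate part is ensuring the special functionals of $G_u^*$ (coming from the second supremum in the norm of $G_u$) do not destroy the estimate — here one uses that the averages are taken sufficiently spread out and that their disjoint supports make any special functional's contribution controllable, exactly the same phenomenon exploited in the tightness-by-support argument. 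Once both halves are in place, combining them with the position of $G_u^*$ in the chart (type (3b): tight by support, locally minimal, uniformly inhomogeneous) completes the proof.
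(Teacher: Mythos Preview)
Your outline for local minimality reaches the right intermediate step---Lemma~\ref{linftyn} plus unconditionality gives uniform copies of $\ell_\infty^n$ in every block subspace---but then takes a wrong turn. The substitution map $e_{n_i}^* \mapsto u_i^*$ (with $u_i^*$ an $\ell_{\infty+}$-average) is \emph{not} a uniform isomorphism: in $G_u^*$ one has $\norm{\sum_{i=1}^m e_{n_i}^*}$ of order $f(m)$ (dual to the lower $f$-estimate in $G_u$), whereas the $u_i^*$ behave like the $\ell_\infty^m$ basis, so $\norm{\sum u_i^*}$ stays bounded. The distortion of your map is therefore at least $f(m)$, unbounded in $m$. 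The paper's argument is both simpler and correct: once you know every block subspace contains $\ell_\infty^n$'s uniformly, local minimality is immediate, because \emph{every} $n$-dimensional normed space $(1+\epsilon)$-embeds into $\ell_\infty^{N(n,\epsilon)}$. No substitution of basis vectors is needed, and the special functionals play no role here. (This is exactly the ``trivial reason'' alluded to in Section~5.)

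For tightness by support, your first suggestion---deduce it by duality from tightness by support of $G_u$---does not work: there is no general principle that this property passes to duals, since the adjoint of an isomorphism between block subspaces of $G_u^*$ lands in quotients, not subspaces, of $G_u$. Your second suggestion is the right one and is what the paper does, but you are missing the concrete mechanism. Given disjointly supported $(x_n^*)$, $(y_n^*)$ and $T\colon[x_n^*]\to[y_n^*]$, one builds $\ell_{\infty+}$-averages $x_n^*$ and companion $\ell_{1+}$-averages $x_n$ with $\mathrm{supp}\,x_n\subseteq\mathrm{supp}\,x_n^*$ (Lemma~\ref{linftyn}), assembles R.I.S.\ pairs, and then---this is the key step you do not mention---chooses the coding so that the \emph{image} sequence $Tz_1^*,\ldots,Tz_k^*$ is special. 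Then $\norm{Tz^*}\leq 1$ automatically. The work is to show $\norm{z^*}\geq c\,f(k)^{1/2}$, which reduces via $z^*(z)$ to the estimate $\norm{z}\leq 3k/f(k)$; here, when a special functional $w^*$ of length $k$ is tested against $Ez$, the terms with $i=j<t$ vanish because $w_i^*=Tz_i^*$ is supported in $\bigcup_n\mathrm{supp}\,y_n^*$, disjoint from $z_j$. That is where disjointness of supports replaces the alternating-sign conditional estimate of the HI argument.
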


\begin{proof}

Given $n \in \N$ and  $\epsilon=1/10$ we may by Lemma \ref{linftyn} find some
$N$ such that there exists in the span of any
$x_1^*<\ldots<x_N^*$ an $\ell_{\infty+}^{n}$-average with constant
$1+\epsilon$.
By unconditionality we deduce that any block-subspace of $G_u^*$ contains
$\ell_{\infty}^n$'s uniformly, and therefore $G_u^*$ is locally minimal.

Assume now towards a contradiction that $(x_n^*)$ and $(y_n^*)$ are disjointly
supported and equivalent block sequences in $G_u^*$, and let $T: [x_n^*] \rightarrow [y_n^*]$ be defined by $Tx_n^*=y_n^*$.

We may assume that each $x_n^*$ is an $\ell_{\infty +}^n$-average with
constant $1+\epsilon$. Using Hahn-Banach theorem, the $1$-unconditionality  of the
basis, and Lemma \ref{linftyn}, we may also find for each $n$ an $\ell_{1+}^n$-average $x_n$ with
constant $1+\epsilon$ such that ${\rm supp}\ x_n \subseteq {\rm supp}\ x_n^*$
and
$x_n^*(x_n) \geq 1/2$.
By construction, for each $n$, $Tx_n^*$ is disjointly supported from $[x_k]$,
and up to modifying $T$, we may assume that $Tx_n^*$  is in ${\bf Q}$ and of
norm at most $1$ for each $n$.

 If $z_1,\ldots,z_m$ is a R.I.S. of these ${\ell}_{1+}^n$-averages $x_n$ with constant $1+\epsilon$,
with $m \in [\log N, \exp N]$, $N \in L$, and $z_1^*,\ldots,z_m^*$ are the
functionals associated to $z_1,\ldots,z_m$, then by \cite{g:hyperplanes} Lemma 7, the $(m,f)$-form
$z^*=f(m)^{-1}(z_1^*+\ldots+z_m^*)$ satisfies
$$z^*(z_1+\ldots+z_m) \geq \frac{m}{2f(m)} \geq
\frac{1}{4}\norm{z_1+\ldots+z_m},$$
and furthermore  $Tz^*$ is also an $(m,f)$-form.
Therefore we may build R.I.S.  vectors $z$ with constant $1+\epsilon$ of
arbitrary length $m$  in $[\log N, \exp N]$,  $N \in L$, so that $z$ is
$4^{-1}$-normed by an $(m,f)$-form $z^*$ such that $Tz^*$ is also
an $(m,f)$-form.
We may then consider a  sequence $z_1,\ldots,z_k$ of length $k \in K$ of such
R.I.S. vectors of length $m_i$, and some corresponding $(m_i,f)$-forms $z_1^*,\ldots,z_k^*$ (i.e
$z_i^*$ $4^{-1}$-norms $z_i$ and $Tz_i^*$ is also an $(m_i,f)$-form for all $i$), such
that
$Tz_1^*,\ldots,Tz_k^*$ is a special sequence.
 Then we let
 $z=z_1+\cdots+z_k$ and $z^*=f(k)^{-1/2}(z_1^*+\ldots+z_k^*)$.
Since $Tz^*=f(k)^{-1/2}(Tz_1^*+\ldots+Tz_k^*)$ is a special functional it follows that
$$\norm{Tz^*} \leq 1.$$Our aim is now to show that $\norm{z} \leq 3kf(k)^{-1}$.
It will then follow that
$$\norm{z^*} \geq z^*(z)/\norm{z} \geq  f(k)^{1/2}/12.$$
 Since $k$ was arbitrary in $K$ this will imply that $T^{-1}$ is unbounded and provide the desired contradiction.

The proof is  almost exactly the same as in  \cite{g:hyperplanes}. Let $K_0=K
\setminus \{k\}$ and let $g$ be the corresponding function given by
\cite{g:hyperplanes} Lemma 6. To prove that $\norm{z} \leq3kf(k)^{-1}$ it is
enough by \cite{g:hyperplanes} Lemma 8 and Lemma \ref{fundamental} to prove
that for any interval $E$ such that $\norm{Ez} \geq 1/3$, $Ez$ is normed by
some $(M,g)$-form with $M \geq 2$.

By the discussion in the proof of the main theorem in  \cite{g:hyperplanes},
the only possible norming functionals apart from $(M,g)$-forms are  special
functionals of length $k$. So let $w^*=f(k)^{-1/2}(w_1^*+\cdots+w_k^*)$ be  a
special functional of length $k$, and $E$ be an interval such that $\norm{Ez}
\geq 1/3$. We need to show that $w^*$ does not norm $Ez$.

Let $t$ be minimal such that $w_t^* \neq Tz_t^*$. If $i \neq j$ or $i=j>t$ then
by definition of special sequences there exist $M \neq N \in L$, $\min(M,N)
\geq j_{2k}$, such that $w_i^*$  is an $(M,f)$-form and $z_j$ is an R.I.S.
vector of size $N$ and constant $1+\epsilon$. By \cite{g:hyperplanes} Lemma~8,
$z_j$ is an $\ell_{1+}^{N^{1/10}}$-average with constant $2$. If $M<N$ then
$2M<\log \log \log N$ so, by \cite{g:hyperplanes} Corollary 3, $|w_i^*(Ez_j)|
\leq 6f(M)^{-1}$. If $M>N$ then $\log \log \log M>2N$  so, by
\cite{g:hyperplanes} Lemma 4, $|w_i^*(Ez_j)| \leq 2f(N)/N$. In both cases it
follows that $|w_i^*(Ez_j)| \leq k^{-2}$.

If $i=j=t$ we have $|w_i^*(Ez_j)| \leq 1$. Finally if $i=j<t$ then
$w_i^*=Tz_i^*$. Since $Tz_i^*$ is disjointly supported from $[x_k]$ and
therefore from $z_j$,
it follows simply that $w_i^*(Ez_j)=0$ in that case.

Summing up we have obtained that
$$|w^*(Ez)| \leq f(k)^{-1/2}(k^2. k^{-2}+1)=2f(k)^{-1/2} < 1/3 \leq \norm{Ez}.$$
Therefore $w^*$ does not norm $Ez$ and this finishes the proof.
\end{proof}

\subsection{Uniformly inhomogeneous examples} It may be observed that  $G_u^*$ is uniformly inhomogeneous. We state this in
a general form which  implies the result for $G_u$, Schlumprecht's space $S$
and its dual $S^*$. This is also true for Gowers-Maurey's space $GM$ and its dual $GM^*$, as well as
for $G$ and  $G^*$, where $G$ is the HI asymptotically unconditional space of
Gowers from \cite{g:asymptotic}, which we shall redefine and study later
on. As HI spaces are always uniformly inhomogeneous however, we need to
observe that a slightly stronger result is obtained by the proof of the next
statement to see that Proposition \ref{spaceswithtcaciuc} is not trivial in
the case of $GM$, $G$ or their duals - see the three paragraphs after Proposition \ref{spaceswithtcaciuc}.

\begin{prop}\label{spaceswithtcaciuc}  Let $f \in {\mathcal F}$ and let $X$ be a space with a bimonotone basis
satisfying a lower $f$-estimate.  Let $\epsilon_0=1/10$, and assume that for every
$n \in [\log N, \exp N], N \in L$, $x_1,\ldots,x_n$  a R.I.S. in $X$
 with constant $1+\epsilon_0$ and  $x=\sum_{i=1}^Nx_i$,
$$\norm{Ex}\leq\sup\Bigl\{|x^*(Ex)|:M\geq 2, x^*\ \hbox{is an $(M,f)$-form}
\Bigr\}$$
for every interval $E$ such that $\norm{Ex}\ge 1/3$. Then $X$ and $X^*$
are uniformly inhomogeneous.
\end{prop}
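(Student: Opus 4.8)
The plan is to exhibit, in any $2n$ infinite-dimensional subspaces $Y_1,\dots,Y_{2n}$ of $X$ (and dually of $X^*$), unit vectors $y_i\in Y_i$ so that the first half $(y_i)_{i=1}^n$ and the second half $(y_i)_{i=n+1}^{2n}$ are badly non-equivalent; concretely I want one sequence to behave like an $\ell_1^n$-basis (up to a loss controlled by $f$) and the other like an $\ell_\infty^n$-basis, which forces the equivalence constant to grow like $n/f(n)\to\infty$, so that a suitable $n$ works for any prescribed $M$. The two ingredients that produce these two behaviours are already in the excerpt: Lemma \ref{linftyn} (building $\ell_{\infty+}^n$-averages in $X^*$ from block functionals, and dually $\ell_{1+}^n$-vectors in $X$) and Lemma \ref{fundamental} together with the $(M,f)$-norming hypothesis of the Proposition (which gives, for R.I.S. sums $x=\sum_1^N x_i$, the upper bound $\norm x\le (1+\epsilon+\epsilon')Nf(N)^{-1}$, i.e.\ an $\ell_1$-like sum is actually \emph{small} — an upper $\ell_1$-average estimate that, combined with the lower $f$-estimate, pins down its norm).

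The steps I would carry out, for $X$ first. Fix $M\ge 1$; choose $n$ large, and inside each $Y_i$ pass to a block sequence and use Lemma \ref{linftyn} (via Hahn--Banach and the lower $f$-estimate, exactly as in the proof of Proposition \ref{gu}) to find in $Y_i$ a normalised $\ell_{1+}^{m}$-average $u_i$ with constant $1+\epsilon_0$, for a common large $m=m(n)$, with the $u_i$ (for $i\le n$) chosen successive and forming a R.I.S.\ for $f$ with constant $1+\epsilon_0$; set $y_i=u_i$. Do the same in $Y_{n+1},\dots,Y_{2n}$, again taking them to be a R.I.S.\ of $\ell_{1+}^m$-averages. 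Now on one hand, since each $y_i$ is an $\ell_{1+}^m$-average with constant $1+\epsilon_0$ and $X$ has the lower $f$-estimate, $\norm{\sum_{i=1}^n a_i y_i}\ge f(n)^{-1}\sum_{i=1}^n|a_i|$ — a lower $\ell_1$-bound scaled by $f(n)^{-1}$. On the other hand, for the second block, $\sum_{i=n+1}^{2n} y_i$ is a R.I.S.\ sum of length $n$ with constant $1+\epsilon_0$, so the $(M,f)$-norming hypothesis and Lemma \ref{fundamental} give $\norm{\sum_{i=n+1}^{2n} y_i}\le (1+\epsilon_0+\epsilon_0')\, n\, f(n)^{-1}$. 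If the two half-sequences were $M$-equivalent we would get $n\cdot\text{(const)}\cdot f(n)^{-1}\ge M^{-1}f(n)^{-1}\cdot n$, i.e.\ a bound on $n$ in terms of $M$; choosing $n$ past that bound gives the contradiction, so for that $n$ no $M$-equivalence holds, as required. For $X^*$ the argument is dual: in each subspace $Y_i^*$ of $X^*$ use the first part of Lemma \ref{linftyn} to find a normalised $\ell_{\infty+}^n$-average $x_i^*$ with constant $1+\epsilon_0$; then $\norm{\sum_{i=1}^n x_i^*}\le f(n)\max_i\norm{x_i^*}=f(n)$ by the dual upper estimate, while $\norm{\sum_{i=n+1}^{2n}x_i^*}\ge \sum$ of coordinates $\ge n/(1+\epsilon_0)$ since it is an $\ell_{\infty+}^n$-average — again a gap of order $n/f(n)$, contradicting $M$-equivalence for large $n$. (Alternatively, and more cleanly, one deduces the statement for $X^*$ directly from the one for $X$ by the second ``Furthermore'' clause of Lemma \ref{linftyn}, which pairs the $\ell_{\infty+}^n$-averages in $X^*$ with $\ell_{1+}^n$-vectors in $X$; I would phrase it whichever way is shorter in context.)

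The main obstacle is the bookkeeping around ``uniformly'': I have to make sure a \emph{single} $n$, depending only on $M$, works \emph{simultaneously} for all choices of $2n$ subspaces — this is fine because Lemma \ref{linftyn}'s $N(n,\epsilon)$ and the R.I.S.\ construction are uniform over all block sequences, but it must be stated carefully, and one must check that arbitrary infinite-dimensional subspaces (not just block subspaces) can be handled, which is the standard fact that every infinite-dimensional subspace contains, for each $\delta$, a $(1+\delta)$-basic block-like sequence with respect to $(e_n)$, so $\ell_{1+}$/$\ell_{\infty+}$-averages and R.I.S.\ vectors can be produced inside them up to an arbitrarily small perturbation (absorbed into $\epsilon_0$). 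A secondary point, flagged in the paragraph after the statement, is that the proof in fact yields the sharper quantitative gap $n/f(n)$ rather than merely $n=2$, and I should record that explicitly since it is what makes the Proposition non-vacuous for the HI examples $GM$, $G$ and their duals.
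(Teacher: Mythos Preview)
Your plan contains a genuine gap: the two halves you construct are \emph{identical} (both are R.I.S.\ sequences of $\ell_{1+}$-averages), so both satisfy the same two-sided estimate $\tfrac{n}{f(n)}\le \norm{\sum_{i} y_i}\le C\,\tfrac{n}{f(n)}$ coming from the lower $f$-estimate below and Lemma~\ref{fundamental} above. Your ``contradiction'' then reads $C\,n f(n)^{-1}\ge M^{-1} n f(n)^{-1}$, i.e.\ $C\ge M^{-1}$, which is no contradiction at all; the ratio does not grow with $n$. The same flaw appears in your dual argument: building $\ell_{\infty+}^n$-averages $x_i^*$ in each $Y_i$ gives, for both halves, $1\le\norm{\sum_i x_i^*}\le f(n)$, and the claim ``$\norm{\sum_{i=n+1}^{2n}x_i^*}\ge n/(1+\epsilon_0)$ since it is an $\ell_{\infty+}^n$-average'' is simply false --- a sum of $n$ separate $\ell_{\infty+}$-averages is not itself one.

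What the paper does, and what you are missing, is to make the two halves genuinely different. One half $(y_i)_{i=1}^m$ is a R.I.S.\ of length $m$, so $\norm{\sum y_i}\le 2m/f(m)$. For the other half one takes a much longer R.I.S.\ $(x_i)_{i=1}^n$ with $n\in[m^{10},\exp m]$, chosen so that the pieces $x_i$ with $i\in E_j$ lie in $Y_{2j}$, and sets $z_j=\norm{\sum_{i\in E_j}x_i}^{-1}\sum_{i\in E_j}x_i$. The point is that each block $\sum_{i\in E_j}x_i$ has already absorbed a factor $f(n)^{-1}$ via Lemma~\ref{fundamental}, so after normalising, the lower $f$-estimate applied to all $n$ pieces gives $\norm{\sum_{j=1}^m z_j}\ge m/2$. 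Now the gap between the halves is $f(m)/4$, which can be made larger than any $M$. The dual proof is the mirror image: one half of functionals has $\norm{\sum y_i^*}\ge f(m)/4$ by pairing against a short R.I.S., the other half, built as normalised blocks of functionals over a long R.I.S., has $\norm{\sum z_j^*}\le 6$ via the dual upper estimate.
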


\begin{proof} Given $\epsilon>0$, let $m \in L$ be such that
$f(m) \geq 24\epsilon^{-1}$.
Let $Y_1,\ldots,Y_{2m}$ be arbitrary block subspaces of $X$. By the classical
method for spaces with a lower $f$ estimate, we may find a R.I.S. sequence $y_1<\cdots<y_m$ with constant
$1+\epsilon_0$ with $y_i \in Y_{2i-1}, \forall i$. By Lemma \ref{fundamental},
$$\norm{\sum_{i=1}^m y_i} \leq 2mf(m)^{-1}.$$
Let on the other hand $n \in [m^{10},\exp m]$ and $E_1<\cdots<E_m$ be sets such
that $\bigcup_{j=1}^m E_j=\{1,\ldots,n\}$ and $|E_j|$ is within $1$ of
$\frac{n}{m}$
 for all $j$. We may construct a R.I.S. sequence $x_1,\ldots,x_n$ with
 constant $1+\epsilon_0$ such that $x_i \in Y_{2j}$ whenever $i \in E_j$.

By Lemma \ref{fundamental},
 $$\norm{\sum_{i \in E_j}x_i} \leq
 (1+2\epsilon_0)(\frac{n}{m}+1)f(\frac{n}{m}-1)^{-1} \leq 2nf(n)^{-1} m^{-1}.$$
Let $z_j=\norm{\sum_{i \in E_j}x_i}^{-1}\sum_{i \in E_j}x_i$. Then $z_j \in
 Y_{2j}$ for all $j$ and
$$\norm{\sum_{j=1}^m z_j} \geq f(n)^{-1}\sum_{j=1}^m \big(\norm{\sum_{i \in E_j}x_i}^{-1}\sum_{i \in E_j}\norm{x_i}\big) \geq m/2.$$
Therefore
$$\norm{\sum_{i=1}^m y_i} \leq 4f(m)^{-1}\norm{\sum_{i=1}^m z_i} \leq \epsilon \norm{\sum_{i=1}^m z_i}.$$
Obviously  $(y_{i})_{i=1}^m$ is not $\epsilon^{-1}$-equivalent to
$(z_{i})_{i=1}^m$, and this means that $X$ is uniformly inhomogeneous.

The proof concerning the dual is quite similar and uses the same notation.
Let $Y_{1},\ldots,Y_{2m}$ be arbitrary block subspaces of $X^*$. By Lemma \ref{linftyn}
 we may find a R.I.S. sequence $y_1<\cdots<y_m$ with constant
$1+\epsilon_0$ and functionals  $y_i^* \in Y_{2i-1}$ such that
${\rm range}\ y_i^* \subseteq {\rm range}\ y_i$ and $y_i^*(y_i) \geq 1/2$ for all
$i$. Since
$\norm{\sum_{i=1}^m y_i} \leq 2mf(m)^{-1}$, it follows that
$$\norm{\sum_{i=1}^m y_i^*} \geq  \norm{\sum_{i=1}^m y_i}^{-1}\sum_{i=1}^m
y_i^*(y_i) \geq f(m)/4.$$
 On the other hand we may construct a R.I.S. sequence $x_1,\ldots,x_n$ with
 constant $1+\epsilon_0$ and functionals  $x_i^*$ such that
${\rm range}\ x_i^* \subseteq {\rm range}\ x_i$, $x_i^*(x_i) \geq 1/2$ for all
$i$, and such that $x_i^* \in Y_{2j}$ whenever $i \in E_j$.
Since
 $\norm{\sum_{i \in E_j}x_i} \leq  2nf(n)^{-1} m^{-1}$, it follows that
$$\norm{\sum_{i \in E_j}x_i^*} \geq \frac{n}{3m}\frac{mf(n)}{2n}=f(n)/6.$$

Let $z_j^*=\norm{\sum_{i \in E_j}x_i^*}^{-1}\sum_{i \in E_j}x_i^*$. Then $z_j^* \in
 Y_{2j}$ for all $j$ and
$$\norm{\sum_{j=1}^m z_j^*} \leq \frac{6}{f(n)}f(n)=6.$$
Therefore
$$\norm{\sum_{i=1}^m z_i^*} \leq 24f(m)^{-1}\norm{\sum_{i=1}^m y_i^*}
\leq \epsilon \norm{\sum_{i=1}^m y_i^*}.$$

\end{proof}

\begin{cor} The spaces $S$, $S^*$, $GM$,  $GM^*$, $G$,  $G^*$, $G_u$, and $G_u^*$ are uniformly
inhomogeneous. \end{cor}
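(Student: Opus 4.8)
The plan is to deduce all eight assertions at once from Proposition~\ref{spaceswithtcaciuc}, applied to the four ``ground'' spaces $S$, $GM$, $G$ and $G_u$. Each of these carries a bimonotone basis and satisfies a lower $f$-estimate for $f(n)=\log_2(n+1)\in\mathcal F$; indeed this lower estimate is built into the defining implicit norm equation in every case. Since $S^*,GM^*,G^*,G_u^*$ are precisely the duals of these four spaces, and Proposition~\ref{spaceswithtcaciuc} delivers uniform inhomogeneity of $X$ and of $X^*$ simultaneously, it remains only to verify, for $X\in\{S,GM,G,G_u\}$, the norming hypothesis of that proposition: for every R.I.S.\ $x_1<\cdots<x_N$ for $f$ with constant $1+\epsilon_0$, $\epsilon_0=1/10$, whose length lies in the range prescribed there, and $x=\sum_i x_i$, every interval $E$ with $\norm{Ex}\ge 1/3$ has $Ex$ normed by an $(M,f)$-form with $M\ge 2$.

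First I would treat $X=S$, where this is immediate from the shape of the norm. Unwinding the implicit equation, every functional norming a vector of $S$ is either a basis functional $\pm e_j^*$ (accounting for the $\norm{\cdot}_{c_0}$-term) or an $(M,f)$-form with $M\ge 2$. In a R.I.S.\ the $x_i$ are $\ell_{1+}^{n_i}$-averages of large length, so $\norm{x}_\infty=\max_i\norm{x_i}_\infty\le(1+\epsilon_0)/n_1<1/3$; hence no basis functional can norm $Ex$ once $\norm{Ex}\ge 1/3$, so an $(M,f)$-form with $M\ge 2$ must, and Proposition~\ref{spaceswithtcaciuc} yields that $S$ and $S^*$ are uniformly inhomogeneous.

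Next, for $X=GM$, $G_u$ (and for $X=G$, once its norm is recalled later in this section) the norm carries, besides the $\norm{\cdot}_{c_0}$- and $(M,f)$-form parts, a contribution from special functionals (and, for $G$, the extra term making it asymptotically unconditional, which one handles as in \cite{g:asymptotic}). The $\norm{\cdot}_{c_0}$-part is disposed of exactly as for $S$, so the only real point is that a special functional $w^*=f(k)^{-1/2}(w_1^*+\cdots+w_k^*)$ of length $k$ cannot norm $Ez$ when $z$ is a sufficiently long R.I.S.\ vector --- and this is the standard estimate underlying the Gowers--Maurey constructions in \cite{GM}, \cite{g:hyperplanes} and \cite{g:asymptotic}. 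By the incompatibility of the integers occurring in distinct members of a special sequence, for all but one index $i$ the form $w_i^*$ and the relevant R.I.S.\ piece of $z$ involve integers $M\ne N$ with either $2M<\log\log\log N$ or $\log\log\log M>2N$, whence $|w_i^*(Ez)|$ is bounded by $k^{-2}$ via \cite{g:hyperplanes} Corollary~3 and Lemma~4; summing, $|w^*(Ez)|\le 2f(k)^{-1/2}<1/3\le\norm{Ez}$. This is literally the computation already carried out --- for the operator-twisted special functionals --- inside the proof of Proposition~\ref{gu}, so no fresh work is involved. Hence again only $(M,f)$-forms with $M\ge 2$ norm $Ez$, the hypothesis of Proposition~\ref{spaceswithtcaciuc} holds for $GM$, $G_u$, $G$, and all of $GM,GM^*,G,G^*,G_u,G_u^*$ are uniformly inhomogeneous.

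The only substantive step is thus the special-functional estimate for $GM$, $G_u$, $G$, and I expect it to pose no real difficulty: it is exactly the classical Gowers--Maurey norm analysis, reproduced above in the proof of Proposition~\ref{gu}, so the corollary reduces to quoting it. (Alternatively, those of the listed spaces that are hereditarily indecomposable --- among them $GM$, $G$ and $G^*$ --- are uniformly inhomogeneous already by the observation recorded in the introduction from \cite{FR4} that HI spaces are; the interest of the route through Proposition~\ref{spaceswithtcaciuc} is, as noted before that proposition, the sharper quantitative conclusion it provides, which applies uniformly to all eight spaces.)
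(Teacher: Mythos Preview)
Your proposal is correct and follows exactly the route the paper intends: the corollary is stated without proof precisely because it is meant to drop out of Proposition~\ref{spaceswithtcaciuc} once one notes that $S$, $GM$, $G$, $G_u$ all carry bimonotone bases satisfying a lower $f$-estimate and that the norming hypothesis is the standard R.I.S.\ estimate from \cite{GM}, \cite{g:hyperplanes}, \cite{g:asymptotic}. One small imprecision: the special-functional computation you quote from the proof of Proposition~\ref{gu} (and Lemma~\ref{critical}) treats a vector $z$ built as a sum of R.I.S.\ vectors tailored to a special sequence, whereas here you need the easier statement that a special functional cannot norm an \emph{arbitrary} R.I.S.\ vector of appropriate length --- this is not ``literally'' the same calculation, but it is indeed the content of \cite{g:hyperplanes} Lemma~7 (resp.\ \cite{g:asymptotic} Lemma~7, and the corresponding lemma in \cite{GM}), so your citation carries the argument.
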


\

A slightly stronger statement may be obtained by the
proof of Proposition \ref{spaceswithtcaciuc}, in the sense that the vectors
$y_i$ in the definition of uniform inhomogeneity may be chosen to be
successive. More explicitely, the conclusion may be replaced by the statement that
$$
\forall M\geq 1\;  \exists n \in \N\; \forall Y_1,\ldots,Y_{2n} \subseteq X\;
\exists y_i \in\ku S_{Y_i}\;(y_i)_{i=1}^n\not\sim_M(y_i)_{i=n+1}^{2n}.
$$
where $y_1<\cdots<y_n$ and $y_{n+1}<\cdots<y_{2n}$, and as before $Y_1,\dots,Y_{2n}$ are infinite-dimensional subspaces of $X$.

This property is therefore a block version of the property of uniform
inhomogeneity.  It was observed in \cite{FR4} that the
sixth dichotomy had the following ``block'' version: any Schauder basis of a Banach space
contains a block sequence which is either block uniformly inhomogeneous in the
above sense or asymptotically $\ell_p$ for some $p \in [1,+\infty]$.

It is interesting to observe that either side of this dichotomy corresponds to
one of the two main families of HI spaces, namely spaces
of the type of Gowers-Maurey, based on the example of Schlumprecht, and spaces of the
type of Argyros-Deliyanni, based on Tsirelson's type spaces.
More precisely, spaces of the type of Gowers-Maurey are block uniformly
inhomogeneous, while spaces of the type of Argyros-Deliyanni are
asymptotically $\ell_1$. Observe that the original dichotomy of Tcaciuc fails
to distinguish between these two families, since any HI space is trivially
uniformly inhomogeneous, see \cite{FR4}.

\section{Tight HI spaces of the type of Gowers and Maurey}\label{gowers-maurey}

In this section we show that Gowers' space $G$ constructed in \cite{g:asymptotic} and its dual
are of type
  (1).
  The proof is a refinement of the proof  that
  $G_u$ or $G_u^*$ is of type (3), in which we observe that the hypothesis of
  unconditionality may be replaced by asymptotic unconditionality. The idea is
  to produce constituent parts of vectors or functionals in Gowers'
  construction with sufficient control on their supports (and not just on
  their ranges, as  would be enough to obtain the HI property for example).

\subsection{ A HI space tight by range}
The space $G$ has a norm defined by induction as in $GM$, with the addition of
a new term which guarantees that its basis $(e_n)$ is $2$-asymptotically
unconditional, that is for any sequence of normalised vectors
$N<x_1<\ldots<x_N$, any sequence of scalars $a_1,\ldots,a_N$ and any sequence
of signs $\epsilon_1,\ldots,\epsilon_N$,
$$\norm{\sum_{n=1}^N \epsilon_n a_n x_n} \leq 2\norm{\sum_{n=1}^N a_n x_n}.$$
The basis is bimonotone and, although this is not stated in
\cite{g:asymptotic},  it may be proved as for $GM$ that $G$ is reflexive. It
follows that the dual basis of $(e_n)$ is also $2$-asymptotically
unconditional. The norm on $G$ is defined by the implicit
equation, for all $x \in c_{00}$:

$$\norm{x}=\norm{x}_{c_0}\vee\ \sup\Bigl\{f(n)^{-1}\sum_{i=1}^n\norm{E_i x}\Del 2 \leq n, E_1<\ldots<E_n\Bigr\}$$
$$\vee\ \sup\Bigl\{|x^*(Ex)|\Del k \in K, x^* \hbox{ special of length } k, E \subseteq \N\Bigr\}$$
$$\vee\ \sup\Bigl\{\norm{Sx}\Del S \hbox{ is an admissible operator}\Bigr\},$$

\

where $E$, $E_1,\ldots,E_n$ are intervals of integers, and $S$ is an {\em
admissible operator} if $Sx=\frac{1}{2}\sum_{n=1}^N \epsilon_n E_n x$ for some
sequence of signs $\epsilon_1,\ldots,\epsilon_N$ and some sequence
$E_1,\ldots,E_N$ of intervals which is {\em admissible}, i.e. $N<E_1$ and
$1+\max E_i=\min E_{i+1}$ for every $i < N$.

{\em R.I.S. pairs}  and {\em special pairs} are considered in \cite{g:asymptotic}; first we shall need
a more general definition of these.
Let $x_1,\ldots,x_m$ be a R.I.S. with constant $C$, $m \in [\log N, \exp N]$,
$N \in L$, and let
$x_1^*,\ldots, x_m^*$ be successive normalised functionals. Then we
call {\em  generalised R.I.S. pair with constant $C$} the pair
$(x,x^*)$ defined by
$x=\norm{\sum_{i=1}^m x_i}^{-1}(\sum_{i=1}^m x_i)$ and
$x^*=f(m)^{-1}\sum_{i=1}^m x_i^*$.

Let  $z_1,\ldots,z_k$ be a sequence of successive normalised R.I.S. vectors with constant
$C$, and let
$z_1^*,\ldots, z_k^*$ be a special sequence such that $(z_i,z_i^*)$ is a generalized
R.I.S. pair for each $i$. Then we shall call {\em generalised special pair
with constant $C$} the pair
$(z,z^*)$ defined by
$z=\sum_{i=1}^k z_i$ and
$z^*=f(k)^{-1/2}(\sum_{i=1}^k z_i^*)$.
The pair $(\norm{z}^{-1}z,z^*)$ will be called {\em normalised generalised
  special pair}.

\begin{lemme}\label{critical}
 Let $(z,z^*)$ be a generalised special pair in $G$, of length $k \in K$, with
 constant $2$ and such that
${\rm supp}\ z^* \cap {\rm supp}\ z = \emptyset$.
Then
$$\norm{z} \leq \frac{5k}{f(k)}.$$
\end{lemme}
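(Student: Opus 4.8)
The idea is to run the usual Gowers--Maurey upper-bound argument for $\norm{z}$, using Lemma~\ref{fundamental}, and the only thing that needs checking is that the special functional $z^*$ does not significantly norm any interval-restriction $Ez$ with $\norm{Ez}\geq 1/3$; everything else (admissible operators, $(M,g)$-forms for $g$ the modified function from the $K_0=K\setminus\{k\}$ argument) is handled exactly as in \cite{g:asymptotic} / \cite{g:hyperplanes}. So the core of the proof is an estimate of $|w^*(Ez)|$ where $w^*=f(k)^{-1/2}(w_1^*+\cdots+w_k^*)$ is an \emph{arbitrary} special functional of length $k$ (including possibly $z^*$ itself) and $E$ is an interval.

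First I would recall/set up: $z=\sum_{i=1}^k z_i$ with $(z_i,z_i^*)$ generalised R.I.S.\ pairs, $z_i$ a normalised R.I.S.\ vector of some size $N_i$ with constant $2$, and by \cite{g:hyperplanes} Lemma~8 each $z_i$ is an $\ell_{1+}^{N_i^{1/10}}$-average with constant $2$. Let $t$ be minimal with $w_t^*\neq z_t^*$. Split the double sum $w^*(Ez)=f(k)^{-1/2}\sum_{i,j}w_i^*(Ez_j)$ according to the position of $(i,j)$ relative to $t$, exactly as in the proof of Proposition~\ref{gu}:

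\begin{itemize}
\item For $i\neq j$, or $i=j>t$: by the definition of a special sequence the ``sizes'' of $w_i^*$ (as an $(M,f)$-form) and of $z_j$ (as a R.I.S.\ vector of size $N$) are wildly different, $\min(M,N)\geq j_{2k}$; then \cite{g:hyperplanes} Corollary~3 (if $M<N$, so $2M<\log\log\log N$) or \cite{g:hyperplanes} Lemma~4 (if $M>N$, so $\log\log\log M>2N$) gives $|w_i^*(Ez_j)|\leq k^{-2}$, using that $j_{2k}$ is chosen large enough. This contributes at most $k^2\cdot k^{-2}=1$ to $\sum_{i,j}$.
\item For $i=j=t$: trivially $|w_t^*(Ez_t)|\leq \norm{w_t^*}\norm{z_t}\leq 1$.
\item For $i=j<t$: here $w_i^*=z_i^*$, and this is the one place where the hypothesis ${\rm supp}\,z^*\cap{\rm supp}\,z=\emptyset$ is used. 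Since ${\rm supp}\,z_i^*\subseteq{\rm supp}\,z^*$ is disjoint from ${\rm supp}\,z\supseteq{\rm supp}\,z_j$, we get $w_i^*(Ez_j)=z_i^*(Ez_j)=0$.
\end{itemize}

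Summing, $|w^*(Ez)|\leq f(k)^{-1/2}(1+1)=2f(k)^{-1/2}<1/3\leq\norm{Ez}$ once $k$ is large enough, so no special functional of length $k$ norms $Ez$. Hence the only functionals that can norm any such $Ez$ are $(M,g)$-forms with $M\geq 2$, and Lemma~\ref{fundamental} (applied with $g$ in place of $f$, as in \cite{g:hyperplanes} Lemma~8 — one checks $g\geq\sqrt f$) together with the fact that $z$ is, up to the normalising constants, a R.I.S.\ of length $k$ with constant $2$ for $f$, yields $\norm{z}\leq(1+\epsilon+\epsilon')\,k\,f(k)^{-1}$ with $\epsilon,\epsilon'$ small; absorbing the constants and the small slack, $\norm{z}\leq 5k/f(k)$. (The constant $5$ rather than $3$, as in Proposition~\ref{gu}, comes from the factor-$2$ loss due to $2$-asymptotic unconditionality when estimating the admissible-operator term and in \cite{g:hyperplanes} Lemma~8 for $G$ rather than $G_u$.)

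\textbf{Main obstacle.} The delicate point is the case $i=j<t$: in the unconditional setting of Proposition~\ref{gu} one had disjointness ``for free'' because $Tz_i^*$ was arranged to be disjointly supported from the $x_k$'s; here instead it is built into the hypothesis of the lemma that ${\rm supp}\,z^*$ avoids ${\rm supp}\,z$, and one must be sure this genuinely kills the $i=j<t$ terms — i.e.\ that the generalised special pair really can be taken with disjoint supports (not merely disjoint ranges), which is exactly why the more flexible ``generalised'' R.I.S.\ and special pairs were introduced, with the functionals $z_i^*$ only required to be successive and normalised rather than tied to the $z_i$ by range. The second point requiring care is verifying that the constant actually comes out as $5$: one has to track the factor $2$ from asymptotic unconditionality through Lemma~\ref{fundamental} and through the classification of norming functionals (admissible operators included), and check it does not blow up further. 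Beyond that, the argument is a routine adaptation of the Gowers--Maurey scheme.
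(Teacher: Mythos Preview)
Your approach is the same as the paper's, but there is one genuine gap. In $G$ (as opposed to $G_u$), the norming functionals other than $(M,g)$-forms are not bare special functionals $w^*$ of length $k$: by the discussion in \cite{g:asymptotic} after the definition of the norm, they are of the form $Sw^*$ where $S$ is an \emph{acceptable} operator. You mention admissible operators only in passing, as if they produce a separate term to be handled independently; in fact the acceptable operator sits in front of the special functional, and your estimate must be for $|Sw^*(Ez)|$, not $|w^*(Ez)|$.

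The fix is short but must be stated: acceptable operators are diagonal of norm at most $1$, hence preserve supports and preserve $(M,f)$-forms (\cite{g:asymptotic} Lemma~6). So for $i\neq j$ or $i=j>t$, $Sw_i^*$ is still an $(M,f)$-form and the off-diagonal estimates go through unchanged; and for $i=j<t$, ${\rm supp}\,Sz_i^*\subseteq{\rm supp}\,z_i^*$, so the disjoint-support hypothesis still gives $Sw_i^*(Ez_j)=0$. This is precisely why the hypothesis of the lemma is stated for supports rather than ranges: an acceptable operator need not preserve ranges. Also, the correct references are to \cite{g:asymptotic} (Lemmas~2, 3 and~8), not \cite{g:hyperplanes}; in particular the R.I.S.\ constant $2$ yields an $\ell_{1+}^{N^{1/10}}$-average with constant~$4$ (not~$2$), and the resulting bounds are $12f(M)^{-1}$ and $3f(N)/N$, still dominated by $k^{-2}$.
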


\begin{proof}
The proof follows classically the methods of \cite{GM} or
\cite{g:hyperplanes}.
 Let $K_0=K \setminus \{k\}$ and let $g$ be the corresponding function given
by \cite{g:asymptotic} Lemma 5. To prove that $\norm{z} \leq5kf(k)^{-1}$ it is
enough by Lemma \ref{fundamental} to prove that
for any interval $E$ such that $\norm{Ez} \geq 1/3$, $Ez$ is normed by some $(M,g)$-form with $M \geq 2$.

By the discussion in \cite{g:asymptotic} after the definition of the norm, the
only possible norming functionals apart from $(M,g)$-forms are of the form
$Sw^*$ where $w^*$ is a special functional of length $k$, and $S$ is an
``acceptable'' operator according to the terminology of \cite{g:asymptotic}. We shall not  state the definition of an acceptable
operator $S$, we shall just need to know  that since such an operator is
diagonal of norm at most $1$, it preserves support and $(M,g)$-forms,
\cite{g:asymptotic} Lemma 6. So let $w^*=f(k)^{-1/2}(w_1^*+\cdots+w_k^*)$ be  a
special functional of length $k$, $S$ be an acceptable operator, and $E$ be an
interval such that $\norm{Ez} \geq 1/3$. We need to show that $Sw^*$ does not
norm $Ez$.

Let $t$ be minimal such that $w_t^* \neq z_t^*$. If $i \neq j$ or $i=j>t$ then
by definition of special sequences there exist $M \neq N \in L$, $\min(M,N)
\geq j_{2k}$, such that $w_i^*$ and therefore $Sw_i^*$ is an $(M,f)$-form and
$z_j$ is an R.I.S. vector of size $N$ and constant $2$. By \cite{g:asymptotic}
Lemma 8, $z_j$ is an $\ell_{1+}^{N^{1/10}}$-average with constant $4$. If $M<N$
then $2M<\log \log \log N$ so, by \cite{g:asymptotic} Lemma 2, $|Sw_i^*(Ez_j)|
\leq 12f(M)^{-1}$. If $M>N$ then $\log \log \log M>2N$  so, by
\cite{g:asymptotic} Lemma 3, $|Sw_i^*(Ez_j)| \leq 3f(N)/N$. In both cases it
follows that $|Sw_i^*(Ez_j)| \leq k^{-2}$.

If $i=j=t$ we simply have $|Sw_i^*(Ez_j)| \leq 1$. Finally if $i=j<t$ then $w_i^*=z_i^*$. and
 since
${\rm supp}\ Sz^*_i \subseteq {\rm supp}\ z_i^*$
and
${\rm supp}\ Ez_i \subseteq {\rm supp}\ z_i,$
it follows that $Sw_i^*(Ez_j)=0$ in this case.

Summing up we have obtained that
$$|Sw^*(Ez)| \leq f(k)^{-1/2}(k^2. k^{-2}+1)=2f(k)^{-1/2} < 1/3 \leq \norm{Ez}.$$
Therefore $Sw^*$ does not norm $Ez$ and this finishes the proof.
\end{proof}

The next lemma is expressed in a version which may seem technical but this will
make the proof that $G$ is of type (1) more pleasant to read. At  first
reading, the reader may simply assume that $T=Id$ in its hypothesis.

\begin{lemme}\label{average}
Let $n \in \N$ and let $\epsilon>0$. Let $(x_i)_i$ be a normalised block basis
in $G$
of length  $n^k$ and supported after $2n^k$, where $k=\min\{i \del f(n^i) < (1+\epsilon)^i\}$, and $T:[x_i] \rightarrow G$ be
an isomorphism such that $(Tx_i)$ is also a normalised block basis. Then for
any $n \in \N$ and $\epsilon>0$, there exist a finite interval $F$ and a
multiple $x$ of $\sum_{i \in F}x_i$ such that $Tx$ is an $\ell_{1+}^n$-average
with constant $1+\epsilon$, and a normalised functional $x^*$ such that $x^*(x)
>1/2$ and ${\rm supp}\ x^* \subseteq \bigcup_{i \in F}{\rm range}\ x_i$.
\end{lemme}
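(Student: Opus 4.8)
The plan is to mimic the standard construction of $\ell_{1+}^n$-averages inside spaces with a lower $f$-estimate (as in \cite{GM} or \cite{g:asymptotic}), but to run the averaging argument on the image side, i.e.\ on the block sequence $(Tx_i)$ rather than on $(x_i)$ itself, while keeping track of supports on the domain side. First I would recall the basic fact (a dual/averaging version of Lemma \ref{linftyn}, or directly the classical lemma used to produce $\ell_{1+}^n$-averages in a space satisfying a lower $f$-estimate): if $y_1<\dots<y_{n^k}$ is a normalised block sequence in a space with a lower $f$-estimate and $k=\min\{i : f(n^i)<(1+\epsilon)^i\}$, then some normalised multiple of a sum over a subinterval $F\subseteq[1,n^k]$ of the $y_j$'s, grouped into $n$ consecutive blocks of equal size $n^{k-1}$, is an $\ell_{1+}^n$-average with constant $1+\epsilon$. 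The proof is the usual iterated pigeonhole: if no grouping at the top level into $n$ blocks of size $n^{k-1}$ works, one of the $n$ blocks has norm $\geq (1+\epsilon)$ times the average, pass to it, repeat; after $k$ steps one contradicts the upper $f$-estimate $\norm{\sum_{j=1}^{n^k} y_j}\leq f(n^k)\max_j\norm{y_j}$ coming from the lower $f$-estimate by duality together with $(1+\epsilon)^k > f(n^k)$.

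Applying this to the normalised block sequence $(Tx_i)_{i=1}^{n^k}$, I obtain a finite interval $F\subseteq [1,n^k]$ and a scalar $\lambda>0$ such that $x:=\lambda\sum_{i\in F}x_i$ has $Tx=\lambda\sum_{i\in F}Tx_i$ equal to an $\ell_{1+}^n$-average with constant $1+\epsilon$; here $\lambda=\norm{\sum_{i\in F}Tx_i}^{-1}$. Note $x$ itself is merely a multiple of $\sum_{i\in F}x_i$, which is exactly what is asked. It remains to produce the norming functional $x^*$ with $x^*(x)>1/2$ and $\mathrm{supp}\,x^*\subseteq\bigcup_{i\in F}\mathrm{range}\,x_i$. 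For this I would pick, by Hahn--Banach, a normalised functional $\phi$ with $\phi(x)=\norm{x}$, and then compose with the natural projection: since the basis of $G$ is bimonotone, the restriction $x^*:=\big(\sum_{i\in F}R_i\big)^*\phi$, where $R_i$ denotes the coordinate projection onto $\mathrm{range}\,x_i$ (these ranges may overlap at endpoints, so one takes a disjointification into intervals covered by $\bigcup_{i\in F}\mathrm{range}\,x_i$), is a norm-$\leq 1$ functional supported in $\bigcup_{i\in F}\mathrm{range}\,x_i$ with $x^*(x)=\phi(x)=\norm{x}$. Since $x$ is a nonzero multiple of a sum of normalised vectors and $Tx$ is a normalised $\ell_{1+}^n$-average, $\norm{x}\geq\norm{T}^{-1}$; to get the clean bound $x^*(x)>1/2$ one simply rescales, replacing $x$ by $\norm{x}^{-1}x$ if necessary — but since the statement allows $x$ to be \emph{any} multiple of $\sum_{i\in F}x_i$ and $x^*$ to be an arbitrary normalised functional, one is free to normalise $x$ so that $\norm{x}=1$, whence $x^*(x)=1>1/2$.

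The only genuine subtlety — and the step I expect to be the main obstacle to state cleanly rather than conceptually hard — is the bookkeeping for supports versus ranges: the averaging argument is performed on $(Tx_i)$, so the grouping into $n$ successive sub-blocks is a grouping of the \emph{indices} $i$, which is harmless, but one must make sure the resulting functional on the domain side is supported in $\bigcup_{i\in F}\mathrm{range}\,x_i$ and not merely in $\mathrm{range}\big(\sum_{i\in F}x_i\big)$, i.e.\ one must genuinely use a disjointified sum of coordinate projections onto the individual ranges rather than onto the convex hull of the index set $F$ — otherwise one only controls the range, which as the paper emphasises is strictly weaker than controlling the support. Everything else (the upper $f$-estimate by duality, the pigeonhole iteration, the choice of $k$) is identical to the classical construction and to Lemma \ref{linftyn}, so I would state those parts by reference and concentrate the written proof on the averaging step applied to $(Tx_i)$ and on the support-tracking of $x^*$.
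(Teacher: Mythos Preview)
Your averaging argument on the image side is fine and matches the paper. The gap is in the construction of $x^*$. You write that, by bimonotonicity, the restriction of a norming functional $\phi$ to $\bigcup_{i\in F}\mathrm{range}\,x_i$ via $\big(\sum_{i\in F}R_i\big)^*\phi$ has norm $\leq 1$. This is false: bimonotonicity controls projections onto a \emph{single} interval, not onto a union of disjoint intervals. The basis of $G$ is not unconditional (the space is HI), so the projection onto $\bigcup_{i\in F}\mathrm{range}\,x_i$ can have arbitrarily large norm in general. The remark about ``overlapping endpoints'' misdiagnoses the difficulty entirely.

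The paper's argument uses exactly the extra structure of $G$ that you ignored: its $2$-asymptotic unconditionality (hence that of $G^*$). One takes a unit functional $y^*$ norming $x$ with $\mathrm{range}\,y^*\subseteq\mathrm{range}\,x$, then sets $x^*=Ey^*$ with $E=\bigcup_{i\in F}\mathrm{range}\,x_i$. The interval $\mathrm{range}\,x$ splits into the $|F|$ ranges and $|F|-1$ gaps, i.e.\ into $2|F|-1$ successive intervals. The hypothesis that the sequence is supported after $2n^k$ ensures $2|F|-1<\mathrm{supp}\,x$, so asymptotic unconditionality applies to this decomposition of $y^*$ and gives $\|x^*\|\leq\frac32\|y^*\|<2$; normalising $x^*$ then yields $x^*(x)>1/2$. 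This is precisely why the ``supported after $2n^k$'' clause is in the statement --- your proof never uses it, which is a sign that something is missing.

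A second, smaller issue: you cannot ``freely normalise $x$ so that $\|x\|=1$''. The multiple $\lambda$ is pinned down by the requirement that $Tx$ be a (normalised) $\ell_{1+}^n$-average; renormalising $x$ would destroy that. The inequality $x^*(x)>1/2$ has to come from the bound $\|x^*\|<2$ together with $x^*(x)=y^*(x)=\|x\|$, not from rescaling $x$.
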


\begin{proof}
The proof from \cite{g:asymptotic} that the block basis $(Tx_i)$  contains an
$\ell_{1+}^n$-average with constant $1+\epsilon$ is the same as for $GM$, and
gives that such a vector exists of the form $Tx=\lambda \sum_{i \in F}Tx_i$,
thanks to the condition on the length of $(x_i)$.  We may
therefore deduce that $2|F|-1<{\rm supp}\ x$. Let $y^*$ be a unit functional
which norms $x$ and such that ${\rm range}\; y^* \subseteq {\rm range}\; x$.
Let $x^*=Ey^* $ where $E$ is  the union of the $|F|$ intervals ${\rm range}\;
x_i, i \in F$. Then $x^*(x)=y^*(x)=1$ and by unconditional asymptoticity of
$G^*$,  $\norm{x^*} \leq \frac{3}{2}\norm{y^*}<2$.
 \end{proof}

The proof that $G$ is HI requires defining ``extra-special sequences'' after
having defined special sequences in the usual $GM$ way. However, to prove that
$G$ is tight by range, we shall not need to enter that level of complexity and
shall just use  special sequences.

\begin{prop}\label{type1} The space $G$ is of type (1). \end{prop}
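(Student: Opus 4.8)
The goal is to show that $G$ is of type (1), which by the chart means: HI, tight by range, and (together with the companion result about $G^*$) that it sits in the right place with respect to local minimality. Since $G$ is HI by Gowers \cite{g:asymptotic}, the real content is to prove that $G$ is \emph{tight by range}, i.e.\ that no two block subspaces with disjoint ranges are comparable. So I would fix a block sequence $(x_i)$ of $G$, let $Y=[x_i]$, and suppose toward a contradiction that $Y\sqsubseteq Z$ where $Z=[z_n]$ is a block subspace whose range is disjoint from the range of some tail of $Y$; equivalently (after the usual stabilisation and passing to subspaces) that there is an isomorphism $T$ from a block subspace of $G$ into $G$ such that $(x_i)$ and $(Tx_i)$ are block bases with, for each $i$, ${\rm range}\ Tx_i$ disjoint from $\bigcup_i {\rm range}\ x_i$ (this is where tightness by range rather than by support is used — we only control ranges, and must exploit the $2$-asymptotic unconditionality of $G$ rather than full unconditionality).

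\textbf{Key steps.} First, I would use Lemma \ref{average} (with this $T$) to manufacture, for each $n$, a vector $x$ which is a multiple of $\sum_{i\in F}x_i$ such that $Tx$ is an $\ell_{1+}^n$-average with constant $1+\epsilon$, together with a norming functional $x^*$ with $x^*(x)>1/2$ and ${\rm supp}\ x^*\subseteq\bigcup_{i\in F}{\rm range}\ x_i$. The crucial point is that, because $Tx$ is built as $\lambda\sum_{i\in F}Tx_i$ and the ranges of the $Tx_i$ avoid the ranges of the $x_j$, the functional $Tx^*$ (or rather the functional associated to $Tx$ in the R.I.S.\ construction on the $T$-side) will be disjointly supported from $z=z_1+\cdots$ whenever $z$ is assembled from the vectors $x_j$ — this is exactly the hypothesis ${\rm supp}\ z^*\cap{\rm supp}\ z=\emptyset$ needed to invoke Lemma \ref{critical}. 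Second, following the template of Proposition \ref{gu}: build R.I.S.\ vectors of arbitrary length $m\in[\log N,\exp N]$, $N\in L$, on the $T$-image side, each $\tfrac14$-normed by an $(m,f)$-form whose $T$-preimage is again an $(m,f)$-form (using \cite{g:asymptotic} Lemma 7); then stack $k\in K$ of these into a generalised special pair $(z,z^*)$ with $z=z_1+\cdots+z_k$ and $z^*=f(k)^{-1/2}\sum z_i^*$, arranged so that $Tz_1^*,\ldots,Tz_k^*$ (or the appropriate functionals) form a special sequence and ${\rm supp}\ z^*\cap{\rm supp}\ z=\emptyset$. Third, apply Lemma \ref{critical} to conclude $\norm z\le 5k/f(k)$, while on the other hand $z$ is $\tfrac14$-normed by $z^*$, and since $Tz^*$ is (essentially) a special functional of length $k$ it has norm $\le 1$; this forces $\norm{z^*}\geq c\, f(k)^{1/2}$ with $k\in K$ arbitrary, so $T^{-1}$ (equivalently the embedding $Y\sqsubseteq Z$) is unbounded — contradiction. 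Hence $G$ is tight by range, and combined with HI-ness this places $G$ in type (1).

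\textbf{Main obstacle.} The delicate point is the bookkeeping with \emph{supports} rather than \emph{ranges} throughout the construction. In Proposition \ref{gu} one had full unconditionality, so one could freely pass from a norming functional to its restriction to a support and lose only a bounded factor; here the only substitute is the $2$-asymptotic unconditionality of $G$ and $G^*$, which applies only to sufficiently "late" block sequences. So at each stage — when producing the $\ell_{1+}^n$-average via Lemma \ref{average}, when forming the generalised R.I.S.\ pairs, and when splicing them into a generalised special pair — one must arrange that the relevant vectors are supported far enough out for the asymptotic unconditionality estimate (the factor $3/2$ in the proof of Lemma \ref{average}) to be available, and one must verify that the $T$-image functionals genuinely assemble into an object to which the norm's special-functional term applies. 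A second, more routine obstacle is checking that the operator on the $T$-side behaves like the "acceptable operators" of \cite{g:asymptotic} — i.e.\ that $T$ may be modified so that $Tz_i^*\in\mathbf Q$ and the composed functionals are admissible — exactly as in the passage "up to modifying $T$" in Proposition \ref{gu}. Once these support-tracking issues are handled, the estimates are word-for-word those of \cite{g:asymptotic} together with Lemmas \ref{fundamental}, \ref{critical}, and \ref{average}.
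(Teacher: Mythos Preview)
Your overall strategy is right --- use Lemma \ref{average} to build the constituent pieces, assemble a generalised special pair of length $k\in K$ with disjoint supports, invoke Lemma \ref{critical}, and derive a contradiction --- but you have transplanted the \emph{dual} argument (Proposition \ref{gu} for $G_u^*$, and the later result for $G^*$) onto the primal space $G$, and as written the proof collapses. The operator $T$ here acts on \emph{vectors} in $[x_n]$, not on functionals, so expressions like ``$Tz^*$'' and ``$Tz_1^*,\ldots,Tz_k^*$ is a special sequence'' are undefined. More fatally, you assert both that $z^*$ $\tfrac14$-norms $z$ and that ${\rm supp}\ z^*\cap{\rm supp}\ z=\emptyset$; these are incompatible, since the second gives $z^*(z)=0$.

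The correct bookkeeping is as follows. The vectors $z_i$ are built in $[x_n]$, hence supported in $\bigcup_n{\rm range}\ x_n$, and the functionals $z_i^*$ produced by Lemma \ref{average} also have support contained in $\bigcup_n{\rm range}\ x_n$; meanwhile $Tz_i$ lives in $[e_j:j\notin\bigcup_n{\rm range}\ x_n]$. One arranges that $z_1^*,\ldots,z_k^*$ itself (not any $T$-image) is a special sequence, so that $(Tz,z^*)$ is a generalised special pair with ${\rm supp}\ z^*\cap{\rm supp}\ Tz=\emptyset$; Lemma \ref{critical} then gives $\norm{Tz}\le 5k/f(k)$. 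On the other side, $z^*(z)\ge c\,k f(k)^{-1/2}$ (with $c$ depending on $\norm{T^{-1}}$), so $\norm{z}\ge c\,k f(k)^{-1/2}$. The contradiction is that $\norm{z}/\norm{Tz}\to\infty$ as $k$ ranges over $K$, forcing $T^{-1}$ unbounded. There is no special functional on the $T$-side and no estimate $\norm{z^*}\ge c f(k)^{1/2}$ in this argument; that belongs to the dual result.
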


\begin{proof}
Assume some normalised block-sequence $(x_n)$ is such that $[x_n]$ embeds into
$Y=[e_i, i \notin \bigcup_n {\rm range}\ x_n]$ and look for a contradiction.
Passing to a subsequence and by reflexivity we may  assume that there is some
isomorphism $T:[x_n] \rightarrow Y$ satisfying the hypothesis of Lemma
\ref{average}, that is, $(Tx_n)$ is a normalised block basis in $Y$. Fixing
$\epsilon=1/10$ we may construct by Lemma \ref{average} some block-sequence of
vectors in $[x_n]$ which are $1/2$-normed by functionals in ${\bf Q}$ of
support included in $\bigcup_n {\rm range}\; x_n$, and whose images by $T$ form
a sequence of increasing length ${\ell}_{1+}^n$-averages with constant
$1+\epsilon$. If $Tz_1,\ldots,Tz_m$ is a R.I.S. of these
${\ell}_{1+}^n$-averages with constant $1+\epsilon$, with $m \in [\log N, \exp
N]$, $N \in L$, and $z_1^*,\ldots,z_m^*$ are the functionals associated to
$z_1,\ldots,z_m$, then by \cite{g:asymptotic} Lemma 7, the $(m,f)$-form
$z^*=f(m)^{-1}(z_1^*+\ldots+z_m^*)$ satisfies
$$
z^*(z_1+\ldots+z_m) \geq \frac{m}{2f(m)} \geq \frac{1}{4}\norm{Tz_1+\ldots+Tz_m} \geq (4\norm{T^{-1}})^{-1}\norm{z_1+\dots+z_m}.
$$
Therefore we may build R.I.S.  vectors $Tz$ with constant $1+\epsilon$ of
arbitrary length $m$  in $[\log N, \exp N]$,  $N \in L$, so that $z$ is
$(4\norm{T^{-1}})^{-1}$-normed by an $(m,f)$-form $z^*$ of support included in
$\bigcup_n {\rm range}\;x_n$. For such $(z,z^*)$,  $(Tz,z^*)$ is a generalised
R.I.S. pair. We then consider a  sequence $Tz_1,\ldots,Tz_k$ of length $k \in
K$ of such R.I.S. vectors, such that there exists some special sequence of
corresponding functionals $z_1^*,\ldots,z_k^*$, and finally the  pair $(z,z^*)$
where $z=z_1+\cdots+z_k$ and $z^*=f(k)^{-1/2}(z_1^*+\ldots+z_k^*)$: observe
that  the support of $z^*$ is still included in $\bigcup_n {\rm range}\;x_n$.
 Since $(Tz,z^*)$ is a generalised special pair, it follows from
Lemma \ref{critical} that $$\norm{Tz} \leq 5kf(k)^{-1}.$$ On the other hand,
$$\norm{z} \geq z^*(z) \geq (4\norm{T^{-1}})^{-1}k f(k)^{-1/2}.$$
 Since $k$ was arbitrary in $K$ this implies that $T^{-1}$ is unbounded and provides the desired contradiction.
\end{proof}


\subsection{A HI space tight by range and locally minimal}
As we shall now prove, the dual $G^*$ of $G$ is of type (1) as well, but also
locally minimal.

\begin{lemme}\label{linftynbis}
Let $(x_i^*)$ be a normalised block basis in $G^*$. Then for any $n \in \N$
and $\epsilon>0$, there exists $N(n,\epsilon)$, a finite interval $F
\subseteq [1,N(n,\epsilon)]$, a multiple $x^*$ of
$\sum_{i \in F}x_i^*$ which is an $\ell_{\infty +}^n$-average with constant
$1+\epsilon$ and an $\ell_{1+}^n$-average $x$ with constant $2$ such that
$x^*(x) >1/2$ and ${\rm supp}\ x \subseteq \bigcup_{i \in F}{\rm range}\ x_i^*$.
\end{lemme}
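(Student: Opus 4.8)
The plan is to mirror the structure of Lemma~\ref{linftyn} and Lemma~\ref{average}, combining the $\ell_{\infty+}^n$-average construction on the dual side with the asymptotic unconditionality of $G$ (equivalently of $G^*$) to control supports rather than just ranges. First I would recall that $G$ satisfies a lower $f$-estimate, so by duality any successive normalised functionals $x_1^*<\cdots<x_m^*$ in $G^*$ satisfy the upper estimate $\norm{\sum_{i=1}^m x_i^*}\le f(m)\max_i\norm{x_i^*}$, exactly as in the opening of the proof of Lemma~\ref{linftyn}. Running the same iterated-splitting argument on the block basis $(x_i^*)$, with $N=n^k$ and $k$ chosen so that $(1+\epsilon)^k>f(n^k)$, produces a finite interval $F\subseteq[1,N(n,\epsilon)]$ and a scalar $\lambda>0$ such that $x^*:=\lambda\sum_{i\in F}x_i^*$ is an $\ell_{\infty+}^n$-average with constant $1+\epsilon$. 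Since the $x_i^*$ are successive we also get $2|F|-1<{\rm supp}\ x^*$ in the same way as the length condition is used in Lemma~\ref{average}.

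Next I would produce the companion vector $x$. Let $y$ be a unit vector that norms $x^*$, i.e.\ $x^*(y)=1$, chosen so that ${\rm range}\ y\subseteq{\rm range}\ x^*$ (possible since $x^*$ is a functional on the reflexive space $G$). Put $x=Ey$, where $E$ is the union of the $|F|$ intervals ${\rm range}\ x_i^*$, $i\in F$; then $x^*(x)=x^*(Ey)=x^*(y)=1>1/2$ because $x^*$ is supported on $E$, and ${\rm supp}\ x\subseteq\bigcup_{i\in F}{\rm range}\ x_i^*$. The norm bound $\norm{x}\le 2$ comes from $2$-asymptotic unconditionality of the basis $(e_n)$: since $E$ is a union of $|F|$ intervals and $2|F|-1<{\rm range}\ y$, the coordinate projection onto $E$ (a union of $\le|F|$ successive intervals on a vector supported past $2|F|$) has norm at most $\tfrac{3}{2}<2$, exactly as in the last line of the proof of Lemma~\ref{average}. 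Finally, I would check that $x$ is an $\ell_{1+}^n$-average with constant $2$: decompose $x=\sum_{j=1}^n u_j$ where $u_j$ collects the pieces of $Ey$ over the appropriate $n$-th of the blocks indexed by $F$; from $\norm{x}\le 2$ one forces each $\norm{u_j}\lesssim 1/n$ after renormalising, using that $x^*=\lambda\sum x_i^*$ being an $\ell_{\infty+}^n$-average means the $n$ sub-functionals $x_j^*:=\lambda\sum_{i\in F_j}x_i^*$ have norm $\ge(1+\epsilon)^{-1}\norm{x^*}$, and $x_j^*(u_j)$ is comparable to $\norm{x_j^*}$ — this is the dual-to-primal passage already carried out in the second half of Lemma~\ref{linftyn}. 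Rescaling $x$ to be normalised keeps $x^*(x)>1/2$ with room to spare.

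The main obstacle I anticipate is the bookkeeping in this last step: making sure that the single functional $x^*$ simultaneously $(1/2)$-norms $x$ \emph{and} witnesses that $x$ is an $\ell_{1+}^n$-average with a clean constant (here $2$), all while keeping ${\rm supp}\ x$ inside $\bigcup_{i\in F}{\rm range}\ x_i^*$ rather than merely inside a range. The clean way around it is to not build $x$ from scratch but to invoke the ``Furthermore'' clause of Lemma~\ref{linftyn}: once $x^*=\sum_{i=1}^n x_i^*$ is the $\ell_{\infty+}^n$-average, pick for each $i$ a vector $x_i$ with $\norm{x_i}\le 1$, ${\rm range}\ x_i\subseteq{\rm range}\ x_i^*$ and $x_i^*(x_i)\ge\tfrac12\norm{x_i^*}$ (Hahn--Banach plus bimonotonicity), and set $x=\sum_i x_i$; Lemma~\ref{linftyn} with $\alpha=1/2$ then gives directly that $x$ is an $\ell_{1+}^n$-vector with constant $2(1+\epsilon)$ and $x^*(x)\ge\tfrac{1}{2(1+\epsilon)}\norm{x}$, and since each $x_i$ lives in ${\rm range}\ x_i^*\subseteq\bigcup_{i\in F}{\rm range}\ x_i^*$, the support condition is automatic. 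Absorbing the harmless factor $(1+\epsilon)$ (replace $\epsilon$ by a smaller value at the outset) yields constant $2$ and $x^*(x)>1/2$ after normalising $x$, which is the statement.
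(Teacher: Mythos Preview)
Your proposal contains the right ingredients but assembles them incorrectly, and the two approaches you sketch each miss one of the two requirements.

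In your second paragraph you take a single unit vector $y$ norming the whole $x^*$, project it onto $E=\bigcup_{i\in F}{\rm range}\,x_i^*$, and then try to split $x=Ey$ into $n$ pieces $u_j$. The support condition is fine, but the claim ``from $\norm{x}\le 2$ one forces each $\norm{u_j}\lesssim 1/n$'' is simply false: norming $x^*$ globally gives no information about how the mass of $y$ is distributed among the $n$ sub-intervals (think of $y$ concentrated entirely within ${\rm range}\,y_1^*$). You cannot recover the $\ell_{1+}^n$-average structure this way.

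In your third paragraph you switch to picking a separate $x_i$ for each of the $n$ pieces $x_{(j)}^*:=\lambda\sum_{k\in F_j}x_k^*$ and invoking the ``Furthermore'' clause of Lemma~\ref{linftyn}. This does give the $\ell_{1+}^n$-vector, but now the support condition fails: you have conflated the block-basis elements $x_k^*$ ($k\in F$) with the $n$ piece-functionals $x_{(j)}^*$. The range of $x_{(j)}^*=\lambda\sum_{k\in F_j}x_k^*$ is the \emph{full interval} from $\min{\rm supp}\,x_{\min F_j}^*$ to $\max{\rm supp}\,x_{\max F_j}^*$, which strictly contains $\bigcup_{k\in F_j}{\rm range}\,x_k^*$ whenever there are gaps between consecutive blocks. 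So ``${\rm range}\,x_i\subseteq{\rm range}\,x_{(j)}^*$'' does not imply ${\rm supp}\,x_i\subseteq\bigcup_{k\in F}{\rm range}\,x_k^*$.

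The paper's proof merges the two: for each of the $n$ pieces $y_j^*$ choose a unit $x_j$ with $y_j^*(x_j)=\norm{y_j^*}$ and ${\rm range}\,x_j\subseteq{\rm range}\,y_j^*$; then project each $x_j$ onto $\bigcup_{k\in F_j}{\rm range}\,x_k^*$ (this is where asymptotic unconditionality is used, piece by piece, giving $\norm{y_j}\le 3/2$); normalise to $y_j'$ so that $y_j^*(y_j')\ge\tfrac{2}{3}\norm{y_j^*}$; and only then apply the second half of Lemma~\ref{linftyn} with $\alpha=2/3$ to conclude that $\sum_j y_j'$ is an $\ell_{1+}^n$-vector with constant $\tfrac{3(1+\epsilon)}{2}<2$. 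The point is that the projection must happen at the piece level, \emph{before} you invoke Lemma~\ref{linftyn}, not after.
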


\begin{proof}
We may assume that $\epsilon<1/6$. By Lemma \ref{linftyn} we may find for each
$i
\leq n$
an interval $F_i$, with $|F_i| \leq 2\min F_i$, and a vector $y_i^*$ of
the form $\lambda \sum_{k \in F_i} x_k^*$, such that
$y^*=\sum_{i=1}^n y_i^*$ is an $\ell_{\infty +}^n$-average with constant
$1+\epsilon$. Let, for each $i$, $x_i$ be normalised such that
$y_i^*(x_i)=\norm{y_i^*}$ and ${\rm range}\ x_i \subseteq {\rm range}\ y_i^*$.
Let $y_i=E_i x_i$, where $E_i$ denotes the canonical projection
on $[e_m, m \in \bigcup_{k \in F_i}{\rm range}\ x_k^*]$. By the asymptotic
unconditionality of $(e_n)$, we have that $\norm{y_i} \leq 3/2$.
Let $y_i^{\prime}=\norm{y_i}^{-1}y_i$, then
$$y_i^*(y_i^{\prime})=\norm{y_i}^{-1}y_i^*(y_i)=\norm{y_i}^{-1}y_i^*(x_i) \geq
\frac{2}{3}\norm{y_i^*}.$$
By Lemma \ref{linftyn}, the vector $x=\sum_i y_i^{\prime}$ is an
$\ell_{1+}^n$-vector with constant  $2$, such that $x^*(x)
>\norm{x}/2$, and clearly  ${\rm supp}\ x \subseteq \bigcup_{i \in F}{\rm range}\ x_i^*$.
\end{proof}

\begin{prop} The space $G^*$ is locally minimal and tight by range.
\end{prop}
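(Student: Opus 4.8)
The plan is to mirror the two halves of Proposition~\ref{gu} (the statement about $G_u^*$), but using the asymptotic-unconditionality machinery developed for $G$ in place of full unconditionality. The proof splits into showing local minimality and showing tightness by range, and the latter occupies the bulk of the work.

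\smallskip

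\noindent\emph{Local minimality.} Given $n$ and $\epsilon=1/10$, I would apply Lemma~\ref{linftynbis} to an arbitrary normalised block basis $(x_i^*)$ of $G^*$: it produces, inside the span of finitely many of the $x_i^*$, an $\ell_{\infty+}^n$-average with constant $1+\epsilon$, hence a copy of $\ell_\infty^n$ with uniformly bounded constant in every block subspace of $G^*$. Since $G^*$ has a (merely asymptotically unconditional, but bimonotone) basis and every infinite-dimensional subspace contains a further block subspace, this gives that $\ell_\infty^n$'s embed uniformly into every subspace, i.e.\ $G^*$ is $K$-crudely finitely representable in each of its subspaces for a fixed $K$. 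One subtlety versus the $G_u^*$ case: without unconditionality one cannot simply say ``by unconditionality any block subspace contains $\ell_\infty^n$'s'', but Lemma~\ref{linftynbis} already builds the average directly from a block basis, so local minimality follows exactly as there.

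\smallskip

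\noindent\emph{Tightness by range.} This is the heart of the matter and should run parallel to Proposition~\ref{type1}, working in $G^*$ instead of $G$ and dualising the roles of vectors and functionals. Suppose towards a contradiction that some normalised block sequence $(x_n^*)$ of $G^*$ has $[x_n^*]$ embedding into $Y^* = [e_i^* : i \notin \bigcup_n {\rm range}\ x_n^*]$, with isomorphism $T$ such that $(Tx_n^*)$ is a normalised block basis disjointly supported from (indeed ranging outside) $\bigcup_n {\rm range}\ x_n^*$. Using Lemma~\ref{linftynbis} I would manufacture block functionals that are $\ell_{\infty+}^n$-averages together with witnessing $\ell_{1+}^n$-average vectors $x$ with $x^*(x)>1/2$ and ${\rm supp}\ x \subseteq \bigcup_n {\rm range}\ x_n^*$; then, exactly as in the proof of Proposition~\ref{gu}, I would assemble from these, via \cite{g:asymptotic} Lemma~7, generalised R.I.S.\ pairs $(z, z^*)$ of arbitrary length $m \in [\log N, \exp N]$ with $z^*$ a multiple of a combination of the $x_n^*$'s (so ${\rm supp}\ z^* \subseteq \bigcup_n {\rm range}\ x_n^*$), $Tz^*$ an $(m,f)$-form, and $z$ $\tfrac14$-normed by $z^*$ after rescaling. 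Stacking $k \in K$ of these so that $Tz_1^*, \ldots, Tz_k^*$ form a special sequence, and setting $z^* = f(k)^{-1/2}\sum_i z_i^*$, $z = \sum_i z_i$, I get that $Tz^*$ is (up to the acceptable-operator modification) a special functional of length $k$, while $(Tz, \text{something})$ or rather the pair built from $z$ and $Tz^*$ is a generalised special pair with ${\rm supp}\ z \cap {\rm supp}\ Tz^* = \emptyset$ by the range condition. Lemma~\ref{critical} (applied in $G$, to the generalised special pair living on the $T$-side) then forces $\norm{Tz^*} \leq 5k f(k)^{-1}$ — wait, one must be careful about which side Lemma~\ref{critical} applies to; it bounds the \emph{vector} of a generalised special pair in $G$, so I would arrange the contradiction by bounding $\norm{z}$ from above via Lemma~\ref{critical} in $G$ and $\norm{z^*}\geq z^*(z)/\norm{z}$ from below, concluding $T^{-1}$ (or $T$) is unbounded since $k \in K$ is arbitrary, contradicting that $T$ is an isomorphism.

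\smallskip

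\noindent The main obstacle I anticipate is bookkeeping the support (not merely range) control through the dualisation: Lemma~\ref{critical} needs ${\rm supp}\ z \cap {\rm supp}\ z^* = \emptyset$ for the generalised special pair, and in the dual setting the disjointness that is available is between $Tx_n^*$ and $\bigcup_n {\rm range}\ x_n^*$, so one must check that the R.I.S.\ vectors $z$ built to witness the averages, together with their functionals, can be chosen so that the relevant supports stay inside $\bigcup_n {\rm range}\ x_n^*$ while their $T$-images stay outside — this is exactly the point of the ``${\rm supp}\ x \subseteq \bigcup_{i\in F}{\rm range}\ x_i^*$'' clause in Lemma~\ref{linftynbis}, and of the extra $3/2$-factor losses from asymptotic unconditionality, which must be absorbed harmlessly into the constants. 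Everything else — the estimates on $|Sw_i^*(Ez_j)|$ splitting on $M<N$ versus $M>N$, the appeal to \cite{g:asymptotic} Lemmas 2, 3, 8 and Corollary analogues — is routine and identical to Lemma~\ref{critical} and Proposition~\ref{type1}.
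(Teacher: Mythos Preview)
Your proposal is correct and follows essentially the same route as the paper: local minimality via Lemma~\ref{linftynbis} and asymptotic unconditionality, then tightness by range by building $\ell_{\infty+}^n$-averages $z_i^*$ in $[x_n^*]$ with witnessing $\ell_{1+}^n$-vectors $z_i$ supported inside $\bigcup_n{\rm range}\,x_n^*$, assembling these so that $Tz_1^*,\ldots,Tz_k^*$ is a special sequence, and concluding from $\norm{Tz^*}\leq 1$ (special functional) together with $\norm{z}\leq 5kf(k)^{-1}$ (Lemma~\ref{critical} applied to the generalised special pair $(z,Tz^*)$, whose supports are disjoint by construction) that $\norm{z^*}\geq z^*(z)/\norm{z}$ grows like $f(k)^{1/2}$. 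Your self-correction lands exactly where the paper does; the only cosmetic differences are the norming constant ($1/6$ rather than $1/4$, because the averages have constant $2$) and that the ``acceptable-operator'' business is internal to the proof of Lemma~\ref{critical}, not to the fact that $Tz^*$ is a special functional.
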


\begin{proof}

By Lemma \ref{linftynbis}  we may find in any finite block subspace of $G^*$ of length
$N(n,\epsilon)$ and supported after $N(n,\epsilon)$ an
$\ell_{\infty+}^n$-average with constant $1+\epsilon$. By asymptotic
unconditionality
we deduce that uniformly, any block-subspace of $G^*$ contains
$\ell_{\infty}^n$'s, and therefore $G^*$ is locally minimal.

We prove that $G^*$ is tight by range. Assume towards a contradiction that
some normalised block-sequence $(x_n^*)$ is such that $[x_n^*]$ embeds into
$Y=[e_i^*, i \notin \bigcup_n {\rm range}\ x_n^*]$ and look for a
contradiction. If $T$ is the associated isomorphism, we may by passing to a
subsequence and perturbating $T$ assume that $Tx_n^*$ is successive.

Let $\epsilon=1/10$. By Lemma \ref{linftynbis}, we find in $[x_k^*]$ and for
each $n$, an $\ell_{\infty+}^n$-average
$y_n^*$ with constant $1+\epsilon$ and
an $\ell_{1+}^n$-average $y_n$ with constant $2$, such that
$y_n^*(y_n) > 1/2$ and ${\rm supp}\ y_n \subseteq \bigcup_k {\rm range}\ x_k^*$.
By construction, for each $n$, $Ty_n^*$ is disjointly supported from $[x_k^*]$,
and up to modifying $T$, we may assume that $Ty_n^*$  is in ${\bf Q}$ and of
norm at most $1$ for each $n$.

 If $z_1,\ldots,z_m$ is a R.I.S. of these ${\ell}_{1+}^n$-averages $y_n$ with constant $2$,
with $m \in [\log N, \exp N]$, $N \in L$, and $z_1^*,\ldots,z_m^*$ are the
${\ell}_{\infty+}^n$-averages  associated to $z_1,\ldots,z_m$, then by \cite{g:hyperplanes} Lemma 7, the $(m,f)$-form
$z^*=f(m)^{-1}(z_1^*+\ldots+z_m^*)$ satisfies
$$z^*(z_1+\ldots+z_m) \geq \frac{m}{2f(m)} \geq
\frac{1}{6}\norm{z_1+\ldots+z_m},$$
and furthermore  $Tz^*$ is also an $(m,f)$-form.
Therefore we may build R.I.S.  vectors $z$ with constant $2$ of
arbitrary length $m$  in $[\log N, \exp N]$,  $N \in L$, so that $z$ is
$6^{-1}$-normed by an $(m,f)$-form $z^*$ such that $Tz^*$ is also
an $(m,f)$-form.
We may then consider a  sequence $z_1,\ldots,z_k$ of length $k \in K$ of such
R.I.S. vectors of length $m_i$, and some corresponding functionals $z_1^*,\ldots,z_k^*$ (i.e.,
$z_i^*$ $6^{-1}$-norms $z_i$ and $Tz_i^*$ is also an $(m_i,f)$-form for all $i$), such
that
$Tz_1^*,\ldots,Tz_k^*$ is a special sequence.
Then we let
$z=z_1+\cdots+z_k$ and $z^*=f(k)^{-1/2}(z_1^*+\ldots+z_k^*)$, and observe that
$(z,Tz^*)$ is a generalised special pair.
Since $Tz^*=f(k)^{-1/2}(Tz_1^*+\ldots+Tz_k^*)$ is a special functional it follows that
$$\norm{Tz^*} \leq 1.$$ But it follows from Lemma \ref{critical} that  $\norm{z} \leq 5kf(k)^{-1}$.
Therefore
$$\norm{z^*} \geq z^*(z)/\norm{z} \geq  f(k)^{1/2}/30.$$
 Since $k$ was arbitrary in $K$ this implies that $T^{-1}$ is unbounded and provides the desired contradiction.
\end{proof}

It remains to check that $G^*$ is HI. The proof is very similar to the one in
 \cite{g:asymptotic} that $G$ is HI, and
 we shall therefore not give all details. There are two main differences
 between the two proofs. In \cite{g:asymptotic} some special vectors and
 functionals are constructed,  the vectors are taken alternatively in
 arbitrary block subspaces $Y$ and $Z$ of $G$, and no condition is imposed on where to pick the functionals. In our case there is no condition on where to choose the vectors but  we need to pick the
 functionals in arbitrary subspaces $Y$ and $Z$ of $G^*$ instead. This  is
 possible because of Lemma \ref{linftynbis}. We also need to correct what seems to be a slight imprecision in the proof of
 \cite{g:asymptotic} about the value of some normalising factors,   and therefore we also get worst constants for our
 estimates.

Let $\epsilon=1/10$. Following Gowers we define  an {\em R.I.S. pair} of size $N$ to be a generalised R.I.S.
pair $(x,x^*)$ with constant $1+\epsilon$ of the form $(\norm{x_1+\ldots+x_N}^{-1}(x_1+\ldots+x_N),
f(N)^{-1}(x_1^*+\dots+x_N^*))$, where $x_n^*(x_n)\geq 1/3$ and
${\rm range}\ x_n^* \subset{\rm range}\ x_n$ for each $n$.
A {\em special pair} is a normalised generalised special pair with constant $1+\epsilon$ of the form
  $(x,x^*)$ where $x=\norm{x_1+\ldots+x_k}^{-1}(x_1+\ldots+x_k)$  and
  $x^*=f(k)^{-1/2}(x_1^*+\dots+x_k^*)$ with ${\rm range}\ x_n^* \subseteq {\rm range}\ x_n$
and  for each $n$, $x_n^*\in\bf
Q$, $|x_n^*(x_n)-1/2|<10^{-\min{\rm supp}\ x_n}$.
By \cite{g:asymptotic} Lemma 8, $z$ is a R.I.S. vector with constant $2$
whenever $(z,z^*)$ is a special pair. We shall also require that
 $k \leq \min{\rm supp}\ x_1$, which will imply by \cite{g:asymptotic} Lemma 9 that for
$m<k^{1/10}$, $z$ is a $\ell_{1+}^m$-average with constant 8 (see the
beginning of the proof of Proposition \ref{GstarHI}).

Going  up a level of ``specialness'', a {\em special R.I.S.-pair}
is a generalised  R.I.S.-pair with constant 8 of the form
$(\norm{x_1+\ldots+x_N}^{-1}(x_1+\ldots+x_N),
f(N)^{-1}(x_1^*+\dots+x_N^*))$, where
${\rm range}\ x_n^* \subset{\rm range}\ x_n$ for each $n$, and with
 the additional condition that $(x_n,x_n^*)$ is a special
pair of length at least $\min {\rm supp}\ x_n$.  Finally, an {\em extra-special pair} of
size $k$ is a normalised generalised special pair $(x,x^*)$ with constant 8 of the form
  $x=\norm{x_1+\ldots+x_k}^{-1}(x_1+\ldots+x_k)$  and
  $x^*=f(k)^{-1/2}(x_1^*+\dots+x_k^*)$ with ${\rm range}\ x_n^* \subseteq {\rm
    range}\ x_n$, such that, for each $n$,
$(x_n,x_n^*)$ is a special R.I.S.-pair of length
$\sigma(x_1^*,\dots,x_{n-1}^*)$.

\

Given $Y,Z$ block subspaces of $G^*$ we shall show how to find an
extra-special pair $(x,x^*)$ of size $k$, with $x^*$ built out of vectors in
$Y$ or $Z$, such that the signs of these constituent parts of $x^*$ can be
changed according to belonging to $Y$ or $Z$ to produce a vector
$x^{\prime*}$ with $\norm{x^{\prime*}}\leq 12f(k)^{-1/2}\norm{x^*}$. This will
then prove the result.

Consider then an extra-special pair $(x,x^*)$. Then $x$ splits up as
$$\nu^{-1}\sum_{i=1}^k\nu_i^{-1}\sum_{j=1}^{N_i}\nu_{ij}^{-1}
\sum_{r=1}^{k_{ij}}x_{ijr}$$
 and $x^*$ as
$$f(k)^{-1/2}\sum_{i=1}^k f(N_i)^{-1}\sum_{j=1}^{N_i}f(k_{ij})^{-1}
\sum_{r=1}^{k_{ij}}x^*_{ijr}\,$$
where
the numbers $\nu$, $\nu_i$ and $\nu_{ij}$ are the norms of what
appears to the right. These special sequences are
chosen far enough ``to the right'' so that $k_{ij}\leq\min{\rm supp}\ x_{ij1}$,
and also so that $(\max{\rm supp}\ x_{i\,j-1})^2k_{ij}^{-1}\leq 4^{-(i+j)}$.
We shall also write $x_i$ for $\nu_i^{-1}\sum_{j=1}^{N_i}\nu_{ij}^{-1}
\sum_{r=1}^{k_{ij}}x_{ijr}$ and $x_{ij}$ for $\nu_{ij}^{-1}
\sum_{r=1}^{k_{ij}}x_{ijr}$.

We define a  vector $x'$ by
$$\sum_{i=1}^k\nu_i^{\prime -1}\sum_{j=1}^{N_i}\nu_{ij}^{\prime -1}
\sum_{r=1}^{k_{ij}}(-1)^rx_{ijr},$$
where
the numbers  $\nu_i^{\prime}$ and $\nu_{ij}^{\prime}$ are the norms of what
appears to the right.
We shall write $x'_i$ for $\nu_i^{\prime -1}\sum_{j=1}^{N_i}\nu_{ij}^{\prime -1}
\sum_{r=1}^{k_{ij}}(-1)^rx_{ijr}$ and $x'_{ij}$ for $\nu_{ij}^{\prime -1}
\sum_{r=1}^{k_{ij}}(-1)^rx_{ijr}$.

Finally we define a functional $x^{\prime *}$ as
$$f(k)^{-1/2}\sum_{i=1}^k f(N_i)^{-1}\sum_{j=1}^{N_i}f(k_{ij})^{-1}
\sum_{r=1}^{k_{ij}}(-1)^k x^*_{ijr}.$$

\begin{prop}\label{GstarHI} The space $G^*$ is HI.\end{prop}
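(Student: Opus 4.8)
The plan is to mimic Gowers' proof from \cite{g:asymptotic} that $G$ is HI, but with the roles of vectors and functionals interchanged: whereas in $G$ one builds special vectors with constituent parts chosen alternately in two arbitrary block subspaces $Y,Z$, here we must build a special \emph{functional} $x^*$ whose constituent functionals lie alternately in $Y$ and $Z$. This is exactly what Lemma \ref{linftynbis} makes possible: given block subspaces of $G^*$, it produces $\ell_{\infty+}^n$-averages inside them (together with a vector $x$ with ${\rm supp}\ x$ controlled by the ranges of the chosen functionals and $x^*(x)>1/2$), so the whole hierarchy of R.I.S.~pairs, special pairs, special R.I.S.~pairs and extra-special pairs described just above the statement can be carried out with the functional parts drawn from $Y$ or $Z$ as desired. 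So the first step is to fix block subspaces $Y,Z\subseteq G^*$ and an extra-special pair $(x,x^*)$ of size $k\in K$ with the constituent parts $x^*_{ijr}$ lying in $Y$ when the relevant $\sigma$-index marks them as ``$Y$-type'' and in $Z$ otherwise, following the bookkeeping already set up in the paragraphs preceding Proposition \ref{GstarHI}.

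Next I would estimate $\norm{x^{\prime *}}$ from above, where $x^{\prime *}=f(k)^{-1/2}\sum_i f(N_i)^{-1}\sum_j f(k_{ij})^{-1}\sum_r(-1)^k x^*_{ijr}$ is the sign-changed functional. This is the conditional estimate and is the heart of the argument. One tests $x^{\prime *}$ against an arbitrary norming configuration for $G^*$ — i.e.~against an arbitrary $y\in G$, or rather one bounds $\norm{x^{\prime *}}$ by duality using that $x^{\prime *}$ is, up to the sign pattern, an extra-special functional. The standard Gowers--Maurey computation shows that the only dangerous test vectors are those coming from special pairs of length $k$, and by the definition of $\sigma$ and the injectivity of the coding, a special pair of length $k$ can agree with $(x_{ijr}^*)$ only up to the first point of disagreement $t$; beyond $t$ the different $(M,f)$- and $(N,f)$-form sizes are wildly separated ($\min\geq j_{2k}$, and one of $2M<\log\log\log N$ or $\log\log\log M>2N$ holds), so the cross terms are each $\leq k^{-2}$ by \cite{g:asymptotic} Lemmas 2, 3, 4 and 8, exactly as in the proof of Lemma \ref{critical}. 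The single diagonal term $i=j=t$ contributes at most $1$, and the terms before $t$ vanish by the support-disjointness built into the generalised special pair. Summing, $|w^*(x')|\leq f(k)^{-1/2}(k^2\cdot k^{-2}+1)=2f(k)^{-1/2}$ for every such dangerous $w^*$, while the $(M,g)$-form contributions are handled by Lemma \ref{fundamental} applied to the R.I.S. structure of $x'$; combining these gives $\norm{x^{\prime *}}\leq 12 f(k)^{-1/2}\norm{x^*}$ (the weaker constant $12$ rather than a cleaner one reflecting the normalising-factor imprecision in \cite{g:asymptotic} noted above, and the factors of $2$ and $3/2$ from asymptotic unconditionality).

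On the other hand one needs the matching lower bound showing $x^{\prime *}$ does not have small norm — in fact one shows $\norm{x^*}$ is bounded below (it is an extra-special functional, so $\norm{x^*}\geq x^*(x)\gtrsim k f(k)^{-1/2}$ using $x^*(x)>1/2$ type estimates at each level, together with the lower $f$-estimate which forces $\norm{x_1+\dots+x_k}$-type quantities to be large). Then for the pair of sign patterns — the ``all plus'' functional $x^*$ and the alternating one $x^{\prime *}$ — one has produced, inside $Y+Z$ (more precisely as combinations of unit vectors, half of which lie in $S_Y$ and half in $S_Z$ after suitable normalisation), two functionals that are far from proportional: $x^*$ has norm of order $k f(k)^{-1/2}\cdot(\text{normalisers})$ whereas $x^{\prime *}$ has norm smaller by a factor $f(k)^{-1/2}$. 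Since $k\in K$ is arbitrary and $f(k)\to\infty$, no isomorphism between $Y$ and $Z$ can exist, and as $Y,Z$ were arbitrary block subspaces this gives that $G^*$ is HI (using the standard fact that it suffices to check the HI condition on block subspaces of a space with a basis). The main obstacle is the conditional norm estimate in the second paragraph: one must carefully re-run the Gowers--Maurey case analysis keeping track of which constituent functionals lie in $Y$ versus $Z$, verify that acceptable/diagonal operators of norm $\leq1$ preserve the relevant forms and supports (\cite{g:asymptotic} Lemma 6), and track the degraded constants coming from the $2$-asymptotic unconditionality of $(e_n^*)$ and the normalising-factor correction; everything else is bookkeeping that parallels \cite{g:asymptotic} and the proof of Lemma \ref{critical}.

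\begin{proof}
The proof is, mutatis mutandis, that of \cite{g:asymptotic} that $G$ is HI, with the modifications indicated above: Lemma \ref{linftynbis} is used to place the constituent functionals of an extra-special pair alternately in two given block subspaces $Y,Z$ of $G^*$; the conditional estimate $\norm{x^{\prime *}}\leq 12 f(k)^{-1/2}\norm{x^*}$ is obtained exactly as in the proof of Lemma \ref{critical}, the cross terms $|Sw_i^*(Ez_j)|$ being at most $k^{-2}$ for $i\neq j$ or $i=j>t$ by \cite{g:asymptotic} Lemmas 2, 3, 4 and 8, at most $1$ for $i=j=t$, and $0$ for $i=j<t$ by support disjointness; and the lower bound $\norm{x^*}\gtrsim k f(k)^{-1/2}$ follows from the lower $f$-estimate together with $x^*(x)>1/2$ at each level. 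As $k\in K$ is arbitrary and $f(k)\to\infty$, no isomorphism $Y\to Z$ can exist, so $G^*$ is HI.
\end{proof}
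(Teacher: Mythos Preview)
Your proposal has the central estimate inverted, and the method you sketch does not prove the inequality you claim. You aim to show $\norm{x^{\prime *}}\leq 12f(k)^{-1/2}\norm{x^*}$, i.e.\ an \emph{upper} bound on the sign-changed functional. But to bound a functional in $G^*$ from above you must control $|x^{\prime *}(y)|$ for \emph{every} unit vector $y\in G$, and there is no structure theorem describing the unit ball of $G$ analogous to the description of norming functionals. The case analysis you invoke (``the only dangerous test vectors are special pairs of length $k$'', cross terms $\leq k^{-2}$, etc.) is the analysis of $|w^*(Ex')|$ for norming \emph{functionals} $w^*$ against a fixed \emph{vector} $x'$ --- exactly the machinery of Lemma~\ref{critical} and of \cite{g:asymptotic} --- and it bounds $\norm{x'}$, not $\norm{x^{\prime *}}$. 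You have transplanted the argument to the wrong side of the duality. Moreover, your claim that the $i=j<t$ terms ``vanish by support disjointness'' imports the hypothesis of Lemma~\ref{critical}; in the HI setting there is no such disjointness, and these terms are instead controlled by the conditional cancellation coming from the alternating signs $(-1)^r$.

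The paper's route is the opposite one: it proves the vector estimate $\norm{x'}\leq 12k f(k)^{-1}$ (via Steps 1--3: $x'$ is a R.I.S.\ vector with constant $11$, then Lemma~\ref{fundamental} together with the $(M,g)$-form analysis of \cite{g:asymptotic}), and then obtains a \emph{lower} bound $\norm{x^{\prime *}}\geq x^{\prime *}(x')/\norm{x'}\geq 648^{-1}f(k)^{1/2}$. Combined with a uniform bound on $\norm{y^*+z^*}$ this yields $\norm{y^*-z^*}\geq 648^{-1}f(k)^{1/2}\norm{y^*+z^*}$, which (replacing $z^*$ by $-z^*$) is the HI condition. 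Note also that the conclusion of HI is not ``no isomorphism $Y\to Z$ exists'' but rather that $Y$ and $Z$ cannot form a topological direct sum; your final sentence conflates HI with a tightness-type statement.
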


\begin{proof}
Fix $Y$ and $Z$ block subspaces of $G^*$.
By Lemma \ref{linftynbis} we may construct an extra-special pair $(x,x^*)$ so  that $x^*_{ijr}$ belongs to $Y$ when
$r$ is odd and to $Z$ when $r$ is even.

We first discuss the normalisation of the vectors involved in the definition of $x'$.
By the increasing condition on $k_{ij}$ and $x_{ijr}$ and by asymptotic
unconditionality, we have that
$$\norm{\sum_{r=1}^{k_{ij}} (-1)^r x_{ijr}} \leq 2
\norm{\sum_{r=1}^{k_{ij}}  x_{ijr}},$$
which means that $\nu^{\prime}_{ij} \leq 2 \nu_{ij}$.
Furthermore it also follows  that
 the functional $(1/2)f(k_{ij})^{-1/2}\sum_{r=1}^{k_{ij}}(-1)^rx^*_{ijr}$ is
 of norm at most $1$, and therefore we have that
 $\norm{\sum_{r=1}^{k_{ij}}(-1)^rx_{ijr}}\geq
(1/2)k_{ij}f(k_{ij})^{-1/2}$.
 Lemma 9 from \cite{g:asymptotic} therefore tells us that, for
every $i,j$, $x'_{ij}$ is an $\ell_{1+}^{m_{ij}}$-average with
constant 8, if $m_{ij}<k_{ij}^{1/10}$.
But the $k_{ij}$ increase so fast that, for any $i$, this implies that
the sequence $x'_{i1},\dots,x'_{i\,N_i}$ is a rapidly increasing
sequence with constant 8.
By \cite{g:asymptotic} Lemma 7, it follows that
$$\norm{\sum_{j=1}^{N_i} x_{ij}^{\prime}} \leq 9 N_i/f(N_i).$$
Therefore by the $f$-lower estimate in $G$
we have that $\nu^{\prime}_i \leq 9\nu_i$.

We shall now prove that $\norm{x'} \leq 12kf(k)^{-1}$.
This will imply that
$$\norm{x_*^{\prime}} \geq \frac{x_*^{\prime}(x')}{\norm{x'}}
\geq \frac{f(k)}{12k}[f(k)^{-1/2}
\sum_{i=1}^k f(N_i)^{-1}\nu_i^{\prime -1}\sum_{j=1}^{N_i}f(k_{ij})^{-1}\nu_{ij}^{\prime -1}
\sum_{r=1}^{k_{ij}}x_{ijr}^*(x_{ijr})]$$
$$\geq f(k)^{1/2}(12k)^{-1}.18^{-1}
[\sum_{i=1}^k f(N_i)^{-1}\nu_i^{-1}\sum_{j=1}^{N_i}f(k_{ij})^{-1}\nu_{ij}^{-1}
\sum_{r=1}^{k_{ij}}x_{ijr}^*(x_{ijr})]$$
$$= f(k)^{1/2} (216k)^{-1} \sum_{i=1}^k x_i^*(x_i) \geq 648^{-1}
f(k)^{1/2}.$$
By construction of $x^*$ and $x^{\prime *}$ this will imply that
$$\norm{y^*-z^*} \geq 648^{-1}f(k)^{1/2}\norm{y^*+z^*}$$
for some non zero $y^* \in Y$ and $z^* \in Z$, and since $k \in K$ was arbitrary,
as well as $Y$ and $Z$, this will prove that $G^*$ is HI.

\

The proof that $\norm{x'} \leq 12kf(k)^{-1}$ is given in three steps:

\

\paragraph{Step 1} {\em The vector $x'$ is a R.I.S. vector with constant 11.}
\begin{proof} We already know the sequence $x'_{i1},\dots,x'_{i\,N_i}$ is a rapidly increasing
sequence with constant 8.  Then by \cite{g:asymptotic} Lemma 8 we get that $x'_i$ is also
an $\ell_{1+}^{M_i}$-average with constant 11, if $M_i<N_i^{1/10}$.
Finally, this implies that $x'$ is an R.I.S.-vector with constant 11,
as claimed.\end{proof}

\paragraph{Step 2} {\em Let $K_0=K\setminus\{k\}$, let $g\in\mathcal F$ be
the corresponding function given by \cite{g:asymptotic} Lemma 5.
For every interval $E$ such that $\norm{Ex'}\geq 1/3$, $Ex'$ is normed by
an $(M,g)$-form.}

\begin{proof}The proof is exactly the same as the one of Step 2 in the proof
  of Gowers concerning $G$,
apart from some constants which are modified due to the change of  constant in
Step 1 and to the normalising constants relating $\nu_i$ and $\nu_{ij}$
respectively to
$\nu_i^{\prime}$ and $\nu_{ij}^{\prime}$. The reader is therefore referred to
\cite{g:asymptotic}.\end{proof}

\paragraph{Step 3} {\em The norm of $x'$ is at most $12kg(k)^{-1}=12kf(k)^{-1}$}
\begin{proof} This is an immediate consequence of Step 1, Step 2 and of
 Lemma \ref{fundamental}.
\end{proof}

We conclude that the space $G^*$ is HI, and thus locally minimal of type (1). \end{proof}







\section{Unconditional tight spaces of the type of Argyros and Deliyanni}\label{argyros}
By Proposition \ref{spaceswithtcaciuc}, unconditional or HI spaces built on the model of
Gowers-Maurey's spaces are uniformly inhomogeneous (and even block uniformly inhomogeneous). We shall now
consider a space of Argyros-Deliyanni type, more specifically of the type of a
space constructed by Argyros, Deliyanni, Kutzarova and Manoussakis
\cite{ADKM}, with the
opposite property, i.e., with a basis which is strongly
asymptotically $\ell_1$. This space will also be tight by support and therefore will not contain a copy of $\ell_1$. By the implication at the end of the diagram which appears just before Theorem \ref{final}, this basis will therefore be tight with
constants as well, making this example the ``worst'' known so far in
terms of minimality.

 Again in this section block vectors will not necessarily
be normalized  and some familiarity with the construction in
\cite{ADKM} will be assumed.

\subsection{A strongly asymptotically $\ell_1$ space tight by support}

In \cite{ADKM} an example of HI space $X_{hi}$ is constructed, based
on a ``boundedly modified'' mixed Tsirelson space $X_{M(1),u}$. We
shall construct an unconditional version $X_u$ of $X_{hi}$ in a
similar way as $G_u$ is an unconditional version of $GM$. The proof
that $X_u$ is of type (3) will be based on the proof that $X_{hi}$ is HI, conditional
estimates in the proof of \cite{ADKM}  becoming essentially trivial
in our case due to disjointness of supports.

Fix  a basis $(e_n)$ and ${\mathcal M}$ a family of finite subsets
of $\N$. Recall that a family $x_1,\ldots,x_n$ is {\em ${\mathcal
M}$-admissible} if $x_1<\cdots<x_n$ and $\{\min {\rm supp}\
x_1,\ldots,\min {\rm supp}\ x_n\} \in {\mathcal M}$, and {\em
${\mathcal M}$-allowable} if $x_1,\ldots,x_n$ are vectors with
disjoint supports such that $\{\min {\rm supp}\ x_1,\ldots,\min {\rm
supp}\ x_n\} \in {\mathcal M}$. Let ${\mathcal S}$ denote the family
of Schreier sets, i.e., of subsets $F$ of $\N$ such that $|F| \leq \min F$,
${\mathcal M}_j$ be the subsequence of the sequence $({\mathcal
F}_k)$ of Schreier families associated to sequences of integers
$t_j$ and $k_j$ defined in \cite{ADKM} p 70.

We need to define a new notion. For $W$ a set of functionals which
is stable under projections onto subsets of $\N$, we let ${\rm
conv}_{\Q}W$ denote the set of rational convex combinations of
elements of $W$. By the stability property of $W$ we may write any
$c^* \in {\rm conv}_{\Q}W$ as  a rational convex combination of the
form $\sum_i \lambda_i x_i^*$ where $x_i^* \in W$ and ${\rm supp}\
x_i^* \subseteq {\rm supp}\ c^*$ for each $i$. In this case the set
$\{x_i^* \}_i$ will be called a $W$-compatible decomposition of
$c^*$, and we let $W(c^*) \subseteq W$ be the union of all
$W$-compatible decompositions of $c^*$. Note that if ${\mathcal M}$
is a family of finite subsets of $\N$, $(c_1^*,\ldots,c_d^*)$ is
${\mathcal M}$-admissible, and $x_i^* \in W(c_i^*)$ for all $i$,
then $(x_1^*,\ldots,x_d^*)$ is also ${\mathcal M}$-admissible.

Let ${\mathcal B}=\{\sum_{n}\lambda_n e_n: (\lambda_n)_n \in c_{00},
\lambda_n \in \Q \cap [-1,1]\}$ and let $\Phi$ be a 1-1 function
from ${\mathcal B}^{<\N}$ into $2\N$ such that if
$(c_1^*,\ldots,c_k^*) \in {\mathcal B}^{<\N}$, $j_1$ is minimal such
that $c_1^* \in {\rm conv}_{\Q}{\mathcal A}_{j_1}$, and
$j_l=\Phi(c_1^*,\ldots,c_{l-1}^*)$ for each $l=2,3,\ldots$, then
$\Phi(c_1^*,\ldots,c_k^*)>\max\{j_1,\ldots,j_k\}$ (the set
${\mathcal A}_j$ is defined in \cite{ADKM} p 71 by ${\mathcal
A}_j=\cup_n(K_j^n \setminus K^0)$ where the $K_j^n$'s are the sets
corresponding to the inductive definition of $X_{M(1),u}$).

For $j=1,2,\ldots $, we set $L_j^0=\{ \pm e_n:n\in \N\}$. Suppose
that $\{ L_j^n\}_{j=1}^{\infty }$ have been defined. We set
$L^n=\cup_{j=1}^{\infty}L^n_j$ and
$$L_1^{n+1}=\pm L_1^n\cup\{\frac{1}{2}(x_1^{*}+\ldots +x_d^{*}):d\in \N,x_i^{*}\in L^n,$$

$$(x_1^{*},\ldots,x_d^{*})\;{\rm is}\; \ {\mathcal S}-{\rm allowable}\},$$
\noindent and for $j\geq 1$,
$$L_{2j}^{n+1}=\pm L_{2j}^n\cup\{\frac{1}{m_{2j}}(x_1^{\ast }+\ldots
+x_d^{*}):d\in {\N},x_i^{*}\in L^n,$$
$$(x_1^{*},\ldots ,x_d^{*})\;{\rm is}\;
{\mathcal M}_{2j}-{\rm admissible }\},$$

$$L_{2j+1}^{\prime\;n+1}=\pm L_{2j+1}^n\cup \{\frac{1}{m_{2j+1}}(x_1^{\ast
}+\ldots +x_d^{\ast }):d\in {\N} {\rm\ such\ that}$$
$$\exists (c_1^*,\ldots,c_d^*)\;{\mathcal M}_{2j+1}-{\rm admissible\ and\ }
k>2j+1 {\rm\ with\ }c_1^* \in {\rm conv}_{\Q}L_{2k}^n, x_1^*\in
L_{2k}^n(c_1^*),$$

$$c_i^* \in {\rm conv}_{\Q}L_{\Phi (c_1^{\ast },\ldots ,c_{i-1}^{\ast })}^n,
x_i^{\ast }\in L_{\Phi (c_1^{\ast },\ldots ,c_{i-1}^{\ast
})}^n(c_i^*) \;{\rm for}\;1<i\leq d\},$$

$$L_{2j+1}^{n+1}=\{ Ex^{\ast }:x^{\ast }\in L_{2j+1}^{\prime\;n+1},
 E {\rm\ subset\ of\ } \N\}.$$

We set ${\mathcal B}_j=\cup_{n=1}^{\infty }(L_j^n\setminus L^0)$
 and we consider the norm on $c_{00}$
defined by the set $L=L^0\cup (\cup_{j=1}^{\infty }{\mathcal B}_j)$.
The space $X_u$ is the completion of $c_{00}$ under this norm.

\

In \cite{ADKM} the space $X_{hi}$ is defined in the same way except
that $E$ is an {\bf interval} of integers in the definition of
$L_{2j+1}^{n+1}$, and the definition of $L_{2j+1}^{\prime\;n+1}$ is
simpler, i.e., the coding $\Phi$ is defined directly on ${\mathcal
M}_{2j+1}$-admissible families $x_1^*,\ldots,x_d^*$ in $L^{<\N}$ and
in the definition each $x_i^*$ belongs to $L_{\Phi
(x_1^{\ast},\ldots ,x_{i-1}^{\ast})}^n$. To prove the desired
properties for $X_u$ one could use the simpler definition of
$L_{2j+1}^{\prime\;n+1}$; however this definition doesn't seem to
provide enough special functionals to obtain interesting properties
for the dual as well.

The ground space for $X_{hi}$ and for $X_u$ is the space
$X_{M(1),u}$ associated to a norming set $K$ defined by the same
procedure as $L$, except that $K_{2j+1}^n$ is defined in the same
way as $K_{2j}^n$, i.e.
$$K_{2j}^{n+1}=\pm K_{2j}^n\cup\{\frac{1}{m_{2j}}(x_1^{\ast }+\ldots
+x_d^{*}):d\in {\N},x_i^{*}\in K^n,$$
$$(x_1^{*},\ldots ,x_d^{*})\;{\rm is}\;
{\mathcal M}_{2j+1}-{\rm admissible }\}.$$

For $n=0,1,2,\ldots ,$ we see  that $L_j^n$ is a subset of $K_j^n$,
and therefore $L \subseteq K$.
 The norming set $L$ is closed under
projections onto {\bf subsets} of $\N$, from which it follows that
its canonical basis is unconditional, and has the property that for
every $j$ and every ${\mathcal M}_{2j}$--admissible family $f_1,
f_2, \ldots f_d$ contained in $L$, $f=\frac{1}{m_{2j}}(f_1+\cdots
+f_d)$ belongs to $L$. The {\em weight} of such  an $f$ is defined
by $w(f)=1/m_{2j}$.  It follows that for every $j=1,2,\ldots$ and
every
 ${\mathcal M}_{2j}$--admissible family $x_1<x_2<\ldots<x_n$ in $X_u$,
$$\|\sum_{k=1}^nx_k\|\geq\frac{1}{m_{2j}}\sum_{k=1}^n\| x_k\|.$$ Likewise, for
${\mathcal S}$--allowable families $f_1,\ldots,f_n$ in $L$, we have
$f=\frac{1}{2}(f_1+\cdots+f_d) \in L$, and we define $w(f)=1/2$. The
weight is defined similarly in the case $2j+1$.

\begin{lemme} The canonical basis of $X_u$ is strongly asymptotically
  $\ell_1$.
\end{lemme}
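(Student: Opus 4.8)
The plan is to show that the canonical basis of $X_u$ is strongly asymptotically $\ell_1$ in the sense of \cite{DFKO}: there is a constant $C$ and a function $g:\N\to\N$ so that for every $n$, any $n$ disjointly supported unit vectors in $[e_k : k\ge g(n)]$ are $C$-equivalent to the unit vector basis of $\ell_1^n$. The upper $\ell_1$-estimate, $\norm{\sum_{i=1}^n y_i}\le\sum_{i=1}^n\norm{y_i}$, is automatic from the triangle inequality, so the whole content is the lower estimate. For this I would choose $g(n)=n$ (or any function eventually dominating the ``thresholds'' for the Schreier families, e.g.\ so that any set of $n$ integers each at least $g(n)$ is Schreier, which already holds for $g(n)=n$ since $|F|\le n\le\min F$). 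The constant will be $C=2$, coming from the weight $1/2$ attached to $\mathcal S$-allowable averages in the definition of $L_1^{n+1}$.

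First I would recall that for any $x\in X_u$ and any subset $E\subseteq\N$ one has a norming functional $x^*\in L$ with $x^*(x)=\norm{x}$ and, by the closure of $L$ under projections onto subsets, one may take $\mathrm{supp}\,x^*\subseteq\mathrm{supp}\,x$; in particular, given disjointly supported $y_1,\dots,y_n$ one can pick for each $i$ a functional $y_i^*\in L$ with $y_i^*(y_i)=\norm{y_i}$ and $\mathrm{supp}\,y_i^*\subseteq\mathrm{supp}\,y_i$. Since the $y_i$ are disjointly supported and all supported in $[e_k:k\ge g(n)]$, the set $\{\min\mathrm{supp}\,y_1^*,\dots,\min\mathrm{supp}\,y_n^*\}$ has $n$ elements all $\ge g(n)=n$, hence lies in $\mathcal S$, so $(y_1^*,\dots,y_n^*)$ is an $\mathcal S$-allowable family in $L$. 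Therefore $f:=\tfrac12(y_1^*+\dots+y_n^*)\in L$, and consequently
$$\Bigl\|\sum_{i=1}^n y_i\Bigr\|\ \ge\ f\Bigl(\sum_{i=1}^n y_i\Bigr)\ =\ \frac12\sum_{i=1}^n y_i^*(y_i)\ =\ \frac12\sum_{i=1}^n\norm{y_i},$$
which is the desired lower $\ell_1$-estimate with constant $2$. Combined with the trivial upper estimate this shows the $n$-tuple $(y_i)$ is $2$-equivalent to the $\ell_1^n$-basis, uniformly in $n$, which is exactly the strongly asymptotically $\ell_1$ property (with $C=2$, $g=\mathrm{id}$).

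The only genuinely delicate point is checking that the ``allowable'' clause in the definition of $L_1^{n+1}$ really does apply here: one must be sure that $\mathcal S$-allowability is about disjoint supports (not successive supports) — which it is, by the definition of $\mathcal M$-allowable recalled just above the statement — and that the relevant $\mathcal S$-membership is guaranteed by the support condition $k\ge g(n)$. I would state explicitly that $g(n)=n$ works because any $n$-element subset of $\{n,n+1,\dots\}$ is a Schreier set, and that the functionals $y_i^*$ obtained by projecting a norming functional onto $\mathrm{supp}\,y_i$ remain in $L$ by the stability of $L$ under such projections. Apart from this bookkeeping, no induction on the $L_j^n$ is needed — a single application of the first building operation suffices — so the proof is short; the main (minor) obstacle is simply making sure the definitional conventions for admissibility/allowability and the threshold function are quoted correctly.
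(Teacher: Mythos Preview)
Your proposal is correct and follows essentially the same route as the paper: pick functionals in $L$ supported inside $\mathrm{supp}\,y_i$ (using stability of $L$ under projections), combine them via the $\mathcal S$-allowable clause of $L_1$ with weight $1/2$, and take the threshold function $g(n)=n$. The only small imprecision is your assertion that an \emph{exactly} norming functional lies in $L$; since $L$ need not be weak-$*$ closed this is not guaranteed, and the paper instead takes $f_i\in L$ with $f_i(x_i)\ge(1+\epsilon)^{-1}$ and then lets $\epsilon\to 0$ --- a cosmetic fix that you can make as well.
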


\begin{proof} Fix $n\leq x_1,\ldots,x_n$ where $x_1,\ldots,x_n$ are normalised and
  disjointly supported. Fix $\epsilon>0$ and let for each $i$, $f_i \in L$ be
  such that $f_i(x_i) \geq (1+\epsilon)^{-1}$ and ${\rm supp}\ f_i \subseteq
{\rm supp}\ x_i$. The condition on the supports may be imposed
because $L$ is
  stable under projections onto subsets of $\N$.
Then $\frac{1}{2}\sum_{i=1}^n \pm f_i \in L$ and therefore
$$\norm{\sum_{i=1}^n \lambda_i x_i} \geq \frac{1}{2}\sum_{i=1}^n |\lambda_i|
f_i(x_i) \geq \frac{1}{2(1+\epsilon)}\sum_{i=1}^n |\lambda_i|,$$ for
any $\lambda_i$'s. Therefore $x_1,\ldots,x_n$ is $2$-equivalent to
the canonical basis of $\ell_1^n$.
\end{proof}

It remains to prove that $X_u$ has type (3). Recall that an analysis
$(K^s(f))_s$ of $f \in K$ is a decomposition of $f$ corresponding to
the inductive definition of $K$, see the precise definition in
Definition 2.3 \cite{ADKM}. We shall combine three types of
arguments. First $L$ was constructed so that $L \prec K$, which
means essentially that each $f \in L$ has an analysis $(K^s(f))_s$
whose elements actually belong to $L$ (see the definition on page 74
of \cite{ADKM}); so  all the results obtained in Section
2 of \cite{ADKM} for spaces defined through arbitrary $\tilde{K} \prec K$ (and in particular the crucial
Proposition 2.9) are valid in our case. Then we shall produce
estimates similar to those valid for $X_{hi}$ and  which are of two
forms: unconditional estimates, in which case the proofs from
\cite{ADKM} may be applied directly up to  minor changes of
notation, and thus we shall refer to \cite{ADKM} for details of the
proofs; and conditional estimates, which are different from those of
$X_{hi}$, but easier due to hypotheses of disjointness of supports, and for which we shall give the proofs.

 Recall that if ${\mathcal F}$ is a family of finite subsets of $\N$, then
$${\mathcal F}^{\prime}=\{A \cup B: A, B \in {\mathcal F}, A \cap
B=\emptyset\}.$$ Given $\varepsilon >0$ and $j=2,3,\ldots $, an
$(\varepsilon ,j)$-{\it basic special convex combination
($(\varepsilon ,j)$- basic s.c.c.) (relative to $X_{M(1),u})$} is a
vector of the form $\sum_{k\in F}a_ke_k$ such that: $F\in {\mathcal
M}_j,a_k\geq 0, \sum_{k\in F}a_k=1$, $\{a_k\}_{k\in F}$ is
decreasing, and, for every $G\in {\mathcal F}^{\prime
}_{t_j(k_{j-1}+1)}$, $\sum_{k\in G}a_k< \varepsilon $.

Given a  block sequence $(x_k)_{k\in {\bf N}}$ in $X_{u}$ and $j\geq 2$,
a convex combination
$\sum_{i=1}^na_ix_{k_i}$ is said to be an $(\varepsilon ,j)$-{\it
special convex combination} of $(x_k)_{k\in {\bf N}}$ ($(\varepsilon
,j)$-s.c.c), if there exist $l_1<l_2<\ldots <l_n$ such that $2<{\rm
supp}\ x_{k_1}\leq l_1<{\rm supp}\ x_{k_2}\leq l_2< \ldots <{\rm
supp}\ x_{k_n}\leq l_n$, and $\sum_{i=1}^na_ie_{l_i}$ is an
$(\varepsilon , j)$-basic s.c.c.
 An $(\varepsilon ,j)$-s.c.c. $\sum_{i=1}^n a_ix_{k_i}$
 is called {\it seminormalised} if  $\| x_{k_i}\|=1,\; i=1,\ldots ,n$ and
$$\|\sum_{i=1}^na_ix_{k_i}\|\geq\frac{1}{2}.$$

Rapidly increasing sequences and $(\varepsilon , j)$--R.I. special
convex combinations in $X_u$ are defined by \cite{ADKM} Definitions 2.8 and 2.16
respectively, with $\tilde{K}=L.$

Using the lower estimate for ${\mathcal M}_{2j}$-admissible families
in $X_u$ we get as in \cite{ADKM} Lemma 3.1.

\begin{lemme}\label{scc} For $\epsilon>0$, $j=1,2,\ldots$ and every
normalised block sequence $\{ x_k\}_{k=1}^{\infty }$ in $X_u$, there
exists a finite normalised block sequence $(y_s)_{s=1}^n$ of $(x_k)$
and coefficients $(a_s)_{s=1}^ n$ such that $\sum_{s=1}^na_sy_s$ is
a seminormalised $(\epsilon,2j)$--s.c.c..
\end{lemme}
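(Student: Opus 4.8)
The plan is to mimic the construction of special convex combinations from \cite{ADKM} Lemma 3.1, working inside $X_u$ rather than $X_{M(1),u}$, and exploiting that the lower $\frac{1}{m_{2j}}$-estimate for ${\mathcal M}_{2j}$-admissible families holds in $X_u$ exactly as in the ground space. First I would fix $\epsilon>0$, $j\geq 1$, and a normalised block sequence $(x_k)$. The basic tool is the existence of $(\epsilon,2j)$-basic special convex combinations $\sum_{k\in F}a_ke_k$ relative to $X_{M(1),u}$; these are a purely combinatorial object living on the canonical basis (depending only on the Schreier-type families ${\mathcal M}_{2j}$ and ${\mathcal F}'_{t_{2j}(k_{2j-1}+1)}$), and their existence for an arbitrary block sequence is part of the general machinery of Section 2 of \cite{ADKM} (the relevant statement, valid for any $\tilde K\prec K$, hence in particular for $\tilde K=L$). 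So I would invoke that to produce, for the block sequence $(x_k)$, indices $k_1<k_2<\cdots<k_n$ and $l_1<\cdots<l_n$ with $2<{\rm supp}\ x_{k_1}\leq l_1<{\rm supp}\ x_{k_2}\leq l_2<\cdots$ and coefficients $a_i\geq 0$, $\sum a_i=1$, decreasing, with $\sum_{i\in G}a_i<\epsilon$ for every $G\in{\mathcal F}'_{t_{2j}(k_{2j-1}+1)}$, i.e. so that $\sum_i a_i x_{k_i}$ is an $(\epsilon,2j)$-s.c.c. by definition. Set $y_s:=x_{k_s}$.

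The remaining point is seminormalisation: I must check $\norm{\sum_i a_i y_i}\geq \tfrac12$. This is where the lower estimate does the work. The family $\{\min{\rm supp}\ y_1,\ldots,\min{\rm supp}\ y_n\}$ is ${\mathcal M}_{2j}$-admissible because $F\in{\mathcal M}_{2j}$ and $\min{\rm supp}\ y_i\leq l_i$ with the $l_i$ realising $F$ (this is precisely the compatibility built into the definition of an $(\epsilon,2j)$-s.c.c.); actually one should be slightly careful and note that $\{\min{\rm supp}\ y_i\}_i\in{\mathcal M}_{2j}$ follows since ${\mathcal M}_{2j}$ is a Schreier-type (hence spreading/hereditary) family and $\min{\rm supp}\ y_i\leq l_i$. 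Hence $f:=\frac{1}{m_{2j}}\sum_{i=1}^n f_i\in L$ whenever the $f_i\in L$ are chosen with $f_i(y_i)=1=\norm{y_i}$ and ${\rm supp}\ f_i\subseteq{\rm supp}\ y_i$ (possible since $L$ is stable under projections onto subsets and the $y_i$ are normalised); then the ${\mathcal M}_{2j}$-admissibility of $(f_i)$ follows from that of $(y_i)$, and
$$\Big\|\sum_i a_i y_i\Big\|\geq f\Big(\sum_i a_i y_i\Big)=\frac{1}{m_{2j}}\sum_i a_i f_i(y_i)=\frac{1}{m_{2j}}.$$
This only gives $1/m_{2j}$, not $1/2$; to get $1/2$ one uses instead the key estimate that a special convex combination of an R.I.S.\ is \emph{not} substantially shrunk — more precisely one appeals to the general lower bound for $(\epsilon,j)$-s.c.c.'s of normalised block sequences furnished by \cite{ADKM} (the analogue, valid for $\tilde K=L$, of the estimate used in their Lemma 3.1), which yields $\norm{\sum_i a_i y_i}\geq \tfrac12$ directly; alternatively, passing first to a block sequence that is an $(\epsilon',2j')$-R.I.\ s.c.c.\ for suitable auxiliary parameters and then forming the $(\epsilon,2j)$-s.c.c.\ on top, one arranges the norm to stay above $\tfrac12$ by the standard iteration.

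I expect the main obstacle to be exactly this seminormalisation lower bound: verifying that the combinatorics of special convex combinations transfers verbatim from $X_{M(1),u}$ (where \cite{ADKM} prove it) to $X_u$. This rests on the relation $L\prec K$ recorded in the excerpt, which guarantees that every $f\in L$ has an analysis whose constituents lie in $L$, so that the whole of Section 2 of \cite{ADKM} — including the existence of basic s.c.c.'s and the seminormalisation estimate — applies to $X_u$ with no change beyond notation; unconditionality of the basis is not even needed here. Once that transfer is granted the proof is a direct citation of the $X_{M(1),u}$ argument of \cite{ADKM} Lemma 3.1 with $\tilde K=L$, so I would keep the write-up short and point to that lemma, spelling out only the admissibility/stability observations above that make the substitution legitimate.
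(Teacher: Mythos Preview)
Your final approach is the paper's: both simply observe that the only ingredient in \cite{ADKM} Lemma 3.1 is the lower $\tfrac{1}{m_{2j}}$-estimate for ${\mathcal M}_{2j}$-admissible families, that this estimate holds verbatim in $X_u$, and that the proof therefore carries over unchanged. So the proposal is correct and on the same track.

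One point of confusion in your write-up is worth flagging, though. You first set $y_s:=x_{k_s}$ (a subsequence) and then look for a ``general lower bound for $(\epsilon,j)$-s.c.c.'s of normalised block sequences'' that would give $\norm{\sum a_iy_i}\geq \tfrac12$ \emph{directly}. No such universal bound exists: an arbitrary $(\epsilon,2j)$-s.c.c.\ of normalised blocks need not have norm $\geq\tfrac12$ --- your own computation showing only $1/m_{2j}$ is exactly the obstruction. The lemma only asserts \emph{existence} of a seminormalised s.c.c., and in \cite{ADKM} this is obtained via an iteration (of the kind you sketch as the ``alternative'' and which you can see worked out in the dual setting in Lemma~\ref{sccbis} of the present paper): if the first s.c.c.\ has small norm one renormalises, passes to a further block sequence, and repeats, with termination forced in boundedly many steps by the lower estimate. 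So drop the appeal to a nonexistent direct bound, keep the $y_s$ as genuine blocks of $(x_k)$ rather than a subsequence, and your citation to \cite{ADKM} Lemma 3.1 is then exactly right.
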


The following definition is inspired from some of the hypotheses of
\cite{ADKM} Proposition 3.3.

\begin{defi}
 Let $j>100$. Suppose that
$\{ j_k\}_{k=1}^n$, $\{ y_k\}_{k=1}^n$, $\{ c_k^{\ast }\}_{k=1}^n$
and $\{b_k\}_{k=1}^n$ are such that

\medskip

{\rm (i)} There exists a rapidly increasing sequence
$$\{ x_{(k,i)}:\; k=1,\ldots ,n,\; i=1,\ldots ,n_k\} $$ with
$x_{(k,i)}<x_{(k,i+1)}<x_{(k+1,l)}$ for all $k<n$, $i<n_k$, $l\leq
n_{k+1},$ such that:

\noindent {\rm (a)} Each $x_{(k,i)}$ is a seminormalised
$(\frac{1}{m^4_{j_{(k,i)}}}, j_{(k,i)})$--s.c.c. where, for each
$k$, $2j_k+2<j_{(k,i)},\; i=1,\ldots n_k.$

\noindent {\rm (b)} Each $y_k$ is a $(\frac{1}{m^4_{2j_k}},2j_k)$--
 R.I.s.c.c. of  $\{ x_{(k,i)}\}_{i=1}^{n_k}$ of the form
$y_k=\sum _{i=1}^{n_k}b_{(k,i)}x_{(k,i)}.$

\noindent {\rm (c)} The sequence $\{ b_k\}_{k=1}^n$ is decreasing
and  $\sum _{k=1}^nb_ky_k$ is a $(\frac{1} {m^4_{2j+1}},
2j+1)$--s.c.c.

\medskip

{\rm (ii)} $c_k^{\ast }\in {\rm conv}_{\Q}L_{2j_k}$,  and $\max({\rm
supp}\ c_{k-1}^{\ast } \cup {\rm supp}\ y_{k-1}) < \min({\rm supp}\
c_k^* \cup {\rm supp}\ y_k)$, $\forall k$.

\medskip

{\rm (iii)} $j_1>2j+1$ and $2j_k=\Phi (c_1^{\ast },\ldots
,c_{k-1}^{\ast })$, $k=2,\ldots ,n$.

\medskip

Then $(j_k,y_k,c_k^*,b_k)_{k=1}^n$ is said to be a {\em
$j$-quadruple}.
\end{defi}

The following proposition is essential. It is the counterpart of
Lemma \ref{critical} for the space $X_u$.

\begin{prop}\label{criticalbis} Assume that $(j_k,y_k,c_k^*,b_k)_{k=1}^n$ is a
$j$-quadruple
  in $X_u$ such that
${\rm supp}\ c_k^* \cap {\rm supp}\ y_k=\emptyset$ for all
$k=1,\ldots,n$. Then
$$\norm{\sum_{k=1}^n b_km_{2j_k}y_k} \leq \frac{75}{m_{2j+1}^2}.$$
\end{prop}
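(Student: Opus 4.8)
The strategy mirrors the proof of Lemma~\ref{critical}, but now we must exploit the combinatorial structure of the mixed Tsirelson ground space $X_{M(1),u}$ through the machinery of special convex combinations, rather than R.I.S.\ estimates for $GM$-type spaces. Write $z=\sum_{k=1}^n b_k m_{2j_k} y_k$. To bound $\norm z$ we must control $|f(z)|$ for every $f$ in the norming set $L$. We take an analysis $(K^s(f))_s$ of $f$, which by $L\prec K$ may be assumed to consist of elements of $L$, and split the estimate according to the weight of $f$ and of its constituents. The key dichotomy, exactly as in the original Argyros--Deliyanni analysis (their Proposition~3.3), is whether the weight $w(f)$ equals one of the $1/m_{2j_k}$ occurring in the quadruple, equals $1/m_{2j+1}$, or is ``unrelated'' to the coding. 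The coding function $\Phi$ and the condition $2j_k=\Phi(c_1^*,\dots,c_{k-1}^*)$ guarantee that at most one ``coordinate'' $k$ can be hit by a functional whose weight is tied to the special sequence, since distinct special functionals built from the $c_k^*$ split as in the usual tree analysis; this is where the injectivity of $\Phi$ and clause (iii) of the $j$-quadruple definition are used.

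First I would record the relevant estimates imported wholesale from \cite{ADKM}: that each $y_k$, being a $(\tfrac1{m^4_{2j_k}},2j_k)$-R.I.s.c.c., satisfies $\norm{y_k}\le$ (a constant)$/m_{2j_k}$ in the unconditional direction, so that the vectors $b_k m_{2j_k} y_k$ are uniformly bounded and form, after the s.c.c.\ weighting by $\{b_k\}$, something the norming functionals of weight $\ne 1/m_{2j+1}$ see only weakly. These are the unconditional estimates the paper says ``may be applied directly up to minor changes of notation.'' Then I would treat the conditional part: a functional $f$ of weight $1/m_{2j+1}$ which is an ${\mathcal M}_{2j+1}$-admissible combination $\tfrac1{m_{2j+1}}\sum_i x_i^*$ with $x_i^*\in L_{\Phi(c_1^*,\dots,c_{i-1}^*)}(c_i^*)$. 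The point where our setting diverges from $X_{hi}$ is that here, because $\operatorname{supp} c_k^*\cap\operatorname{supp} y_k=\emptyset$ for every $k$, the interaction $x_i^*(y_i)$ that would be large in the HI case is instead \emph{zero}: $x_i^*$ is carried on $\operatorname{supp} c_i^*$, disjoint from $\operatorname{supp} y_i$. So for the ``diagonal'' terms $i=k$ the contribution vanishes, and for the off-diagonal terms $i\ne k$ one uses the R.I.s.c.c.\ estimates (the analogues of \cite{ADKM} Lemma 2.x giving $|x_i^*(y_k)|\lesssim 1/(m_{2j_i}m_{2j_k})$ or similar, summable geometrically thanks to the fast growth $2j_k=\Phi(\dots)$). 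Summing, $|f(z)|\lesssim 1/m_{2j+1}^2$ after accounting for the normalising s.c.c.\ coefficients $b_k$ and the factors $m_{2j_k}$.

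The remaining case is a functional $f$ whose weight is some $1/m_{2l}$ with $2l$ \emph{not} among the $2j_k$, or an ${\mathcal S}$-allowable functional of weight $1/2$: here the standard ``at most one large inner product'' lemma for special convex combinations (this is the role of the $(\varepsilon,j)$-basic s.c.c.\ condition that $\sum_{k\in G}a_k<\varepsilon$ for $G\in{\mathcal F}'_{t_j(k_{j-1}+1)}$) shows $f$ can load only a negligible mass of the s.c.c.\ $\sum b_k m_{2j_k}y_k$, giving again a bound $\lesssim 1/m_{2j+1}^2$ (or better). Combining the three cases and tracking constants yields $\norm z\le 75/m_{2j+1}^2$.

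\textbf{Main obstacle.} The genuinely delicate step is the bookkeeping in the conditional case: one must verify that an ${\mathcal M}_{2j+1}$-admissible functional of weight $1/m_{2j+1}$ arising from the $\Phi$-coding can ``agree'' with the distinguished special sequence $(c_1^*,\dots,c_n^*)$ only on an initial segment, after which the weights $2j_k$ force a mismatch and hence a rapidly decaying estimate — this is precisely the tree-like incompatibility argument of \cite{ADKM} Proposition~3.3, and reproducing it correctly while substituting ``disjoint supports $\Rightarrow$ zero contribution'' for the original conditional cancellation is where all the care lies. Everything else is either quoted verbatim from Section~2 of \cite{ADKM} (valid since $L\prec K$) or a routine summation of a geometric series in the $m_{2j_k}$.
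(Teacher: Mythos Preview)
Your plan captures the broad outline but misidentifies where the real work lies, and this creates a genuine gap. You treat the case $w(f)>1/m_{2j+1}$ (your ``unrelated weight'' case) as handled by a one-line application of the s.c.c.\ property, and you locate the ``main obstacle'' in the case $w(f)=1/m_{2j+1}$. In the paper's argument the situation is exactly reversed. When $w(\varphi)=1/m_{2j+1}$ at the top level, the disjoint-support hypothesis makes the diagonal terms vanish and the off-diagonal terms are controlled by \cite{ADKM} Lemma~3.5 / Corollary~2.17; this case is short. When $w(\varphi)\le 1/m_{2j+2}$, a direct unconditional estimate from \cite{ADKM} Lemma~3.7 suffices. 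The substantial case is $w(\varphi)>1/m_{2j+1}$: here a simple s.c.c.\ bound is not available, because the \emph{tree analysis} of $\varphi$ may contain, at arbitrary depth, subfunctionals of weight exactly $1/m_{2j+1}$ built from the special coding, and these can potentially act nontrivially on $z$.

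The paper handles this by invoking \cite{ADKM} Proposition~2.9 to pass from $\varphi$ to a functional $\psi\in K'$ and blocks $u_{(k,i)}$, then decomposing $\operatorname{supp}\psi$ into the sets $D_1^k,\dots,D_4^k$ of \cite{ADKM} Lemma~3.11. The sets $D_1^k,D_2^k,D_3^k$ are controlled unconditionally as in \cite{ADKM}, but $D_4^k$ requires a separate Claim proved by an induction over the levels $K^s(\psi|_{D_4})$: one constructs auxiliary functionals $g_f$ satisfying a pointwise comparison, and it is precisely in the inductive step $w(f)=1/m_{2j+1}$ of this construction that the disjoint-support hypothesis is invoked again, to show that only the single index $k_2+1$ survives (so $J=\{k_2+1\}$, $T=\emptyset$), after which $g_f=\tfrac12 e^*_{f_{k_2+1}}$ works. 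Your plan does not anticipate this tree-level recursion at all; without it, a functional of weight $1/2$ whose analysis contains a weight-$1/m_{2j+1}$ node matching the special sequence would not be adequately bounded.
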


\begin{proof}
  Our aim is to show that
for every $\varphi\in\cup_{i=1}^{\infty }{\mathcal B}_i$,
$$\varphi (\sum_{k=1}^n b_k m_{2j_k}y_k)\leq
\frac{75}{m_{2j+1}^2}.$$ The proof is given in several steps.

{\tt 1st Case}: $w(\varphi)=\frac{1}{m_{2j+1}}$. Then $\varphi$ has
the form $\varphi =\frac{1}{m_{2j+1}}(Ey^*_{1}+\cdots
+Ey^*_{k_2}+Ey^*_{k_2+1}+\cdots Ey^*_d)$ where $E$ is a subset of
$\N$ and where $y_k^* \in L_{2j_k}(c_k^*)\ \forall k \leq k_2$ and
$y_k^* \in L_{2j_k}(d_k^*)\ \forall k \geq k_2+1$, with $d_{k_2+1}^*
\neq c_{k_2+1}^*$ (this is similar to the form of such a functional
in $X_{hi}$ but with the integer $k_1$ defined there equal to $1$ in our case).

If $k \leq k_2$ then $c_s^*$ and therefore $y_s^*$ is disjointly
supported from $y_k$, so $Ey_s^*(y_k)=0$ for all $s$, and therefore
$\varphi(y_k)=0$. If $k=k_2+1$ then we simply have $|\varphi(y_k)|
\leq \norm{y_k} \leq 17m_{2j_k}^{-1}$, \cite{ADKM} Corollary 2.17.
Finally if $k>k_2+1$ then since $\Phi$ is 1-1, we have that
$j_{k_2+1} \neq j_k$ and for all $s=k_2+1,\ldots,d$, $d_s^*$ and
therefore $y_s^*$ belong to  ${\mathcal B}_{2t_s}$ with $t_s \neq
j_k$. It is then easy to check that we may reproduce the proof of
\cite{ADKM} Lemma 3.5, applied to $Ey_1^*,\ldots,Ey_d^*$, to obtain
the unconditional estimate
$$|\varphi(m_{2j_k}y_k)| \leq \frac{1}{m_{2j+1}^2}.$$
In particular instead of \cite{ADKM} Proposition 3.2, which is a
reformulation of \cite{ADKM} Corollary 2.17 for $X_{hi}$, we simply
use \cite{ADKM} Corollary 2.17 with $\tilde{K}=L$.

Summing up these estimates we obtain the desired result for the 1st
Case.

{\tt 2nd Case}: $w(\varphi )\leq \frac{1}{m_{2j+2}}.$ Then we get an
unconditional estimate for the evaluation of $\varphi (\sum
_{k=1}^{n} b_k m_{2j_k}y_k)$ directly, reproducing the short proof
of \cite{ADKM} Lemma 3.7, using again \cite{ADKM} Corollary 2.17
instead of \cite{ADKM} Proposition 3.2. Therefore

$$|\varphi (\sum_{k=1}^nb_km_{2j_k}y_k)|
\leq\frac{35}{m_{2j+2}} \leq \frac{35}{m_{2j+1}^2}.$$

{\tt 3rd Case}: $w(\varphi)>\frac{1}{m_{2j+1}}$.
 We have $y_k=\sum_{i=1}^{n_k}b_{(k,i)}x_{(k,i)}$
and the sequence $\{ x_{(k,i)},k=1,\ldots n,i=1,\ldots n_k\}$ is a
R.I.S. w.r.t. $L$. By \cite{ADKM} Proposition 2.9 there exist a
functional $\psi\in K^{\prime }$ (see the definition in \cite{ADKM}
p 71) and blocks of the basis $u_{(k,i)}$, $k=1,\ldots ,n$,
$i=1,\ldots ,n_k$ with
 ${\rm supp}\ u_{(k,i)}\subseteq {\rm supp}\ x_{(k,i)}$,
$\| u_k\|_{\ell_1}\leq 16$ and such that
$$|\varphi (\sum_{k=1}^nb_km_{2j_k}
(\sum_{i=1}^{n_k}b_{(k,i)}x_{(k,i)}))|\leq m_{2j_1}b_1b_{(1,1)}+\psi
(\sum_{k=1}^nb_k
m_{2j_k}(\sum_{i=1}^{k_n}b_{(k,i)}u_{(k,i)}))+\frac{1}{m_{2j+2}^2}$$
$$\leq\psi (\sum_{k=1}^nb_km_{2j_k}(\sum_{i=1}^{k_n}
b_{(k,i)}u_{(k,i)}))+\frac{1}{m_{2j+2}}.$$ Therefore  it suffices to
estimate
$$\psi(\sum_{k=1}^nb_km_{2j_k}(\sum_{i=1}^{n_k}
b_{(k,i)}u_{(k,i)})).$$

In \cite{ADKM}  $\psi$ is decomposed as $\psi_1+\psi_2$ and
different estimates are applied to $\psi_1$ and $\psi_2$. Our case
is easier as we may simply assume that $\psi_1=0$ and
$\psi_2=\psi$. We shall therefore refer to some arguments of
\cite{ADKM} concerning some $\psi_2$ keeping in mind that  $\psi_2=\psi$.

Let $D_1^k,\ldots,D_4^k$ be defined as in \cite{ADKM} Lemma 3.11
(a). Then as in \cite{ADKM}, $$\bigcup_{p=1}^4D_p^k=\bigcup _{i=1}^{n_k}
{\rm supp}\ u_{(k,i)}\cap {\rm supp}\ \psi.$$ The proof that
$$\psi|_{\bigcup_kD_2^k}(\sum_kb_km_{2j_k}(\sum_ib_{(k,i)}u_{(k,i)}))
\leq\frac{1}{m_{2j+2}}, \leqno (1)$$
$$\psi|_{\bigcup_kD_3^k}(\sum_kb_km_{2j_k}(\sum_ib_{(k,i)}u_{(k,i)}))
\leq\frac{16}{m_{2j+2}}, \leqno (2)$$ and
$$\psi|_{\bigcup_kD_1^k}(\sum_kb_km_{2j_k}(\sum_ib_{(k,i)}
u_{(k,i)}))\leq\frac{1}{m_{2j+2}}. \leqno (3)$$ may be easily
reproduced from \cite{ADKM} Lemma 3.11. The case of $D_4^k$ is
slightly different from \cite{ADKM} and therefore we give more
details. We claim

\

\noindent{\em Claim:} Let $D=\bigcup_k D_4^k$. Then
$$\psi|_{D}(\sum_kb_km_{2j_k}(\sum_ib_{(k,i)}u_{(k,i)}))
\leq\frac{64}{m_{2j+2}}, \leqno (4)$$

Once the claim is proved it follows by adding the estimates that the
3rd Case is proved, and this concludes the proof of the Proposition.

\

\noindent{\em Proof of the claim:} Recall that $D_4^k$ is defined by
$$D_4^k=\{
m\in\bigcup_{i=1}^{n_k}A_{(k,i)} : {\rm for\ all}\ f\in\bigcup_sK^s(\psi)\
{\rm with}\; m\in {\rm supp}f, w(f) \geq \frac{1}{m_{2j_k}}\;{\rm
and}$$
$${\rm there}\;{\rm
exists}\; f\in\bigcup_sK^s(\psi)\;{\rm with}\; m\in {\rm supp}f,
 w(f)=\frac{1}{m_{2j_k}}{\rm and}$$
$${\rm for}\;{\rm
every}\;g\in\bigcup_sK^s(\psi)\; {\rm with}\; {\rm supp}\ f \subset
{\rm supp}\ g {\rm \ strictly}, w(g)\geq\frac{1}{m_{2j+1}}\}.$$

 For every $k=1,\ldots ,n$, $i=1,\ldots ,n_k$
and every $m\in {\rm supp}\ u_{(k,i)}\cap D_4^k,$ there exists a
unique functional $f^{(k,i,m)}\in \bigcup _sK^s(\psi)$ with $m\in {\rm
supp}\ f$, $w(f)=\frac{1}{m_{2j_k}}$ and such that, for all $g\in
\bigcup_sK^s(\psi)$ with ${\rm supp}\ f \subseteq {\rm supp}\ g$
strictly, $w(g)\geq \frac{1}{m_{2j+1}}.$ By definition, for $k\neq
p$ and $i=1,\ldots ,n_k$, $m\in {\rm supp}\ u_{(k,i)}$, we have
${\rm supp}f^{(k,i,m)}\cap D^p_4=\emptyset.$ Also, if
$f^{(k,i,m)}\neq f^{(k,r,n)},$ then ${\rm supp}\ f^{(k,i,m)}\cap
{\rm supp}\ f^{(k,r,n)}=\emptyset.$

For each $k=1,\ldots ,n$, let $\{ f^{k,t}\}_{t=1}^{r_k}\subseteq \bigcup
K^s(\varphi ) $ be a selection of mutually disjoint such functionals
with $D^k_4=\bigcup _{t=1}^{r_k} {\rm supp}\ f^{k,t}.$ For each such
functional $f^{k,t}$, we set
$$a_{f^{k,t}}=\sum_{i=1}^{n_k}b_{(k,i)}\sum_{m\in {\rm supp}\ f^{k,t}} a_m.$$
Then,
$$f^{k,t}(b_km_{2j_k}(\sum_ib_{(k,i)}u_{(k,i)}))\leq
b_ka_{f^{k,t}}.\leqno (5)$$ We define as in \cite{ADKM} a functional
$g\in K^{\prime }$ with $|g|_{2j}^{\ast }\leq 1$ (see definition
\cite{ADKM} p 71), and blocks $u_k$ of the basis so that $\|
u_k\|_{\ell_1}\leq 16$, ${\rm supp}\ u_k\subseteq \bigcup_i{\rm supp}\
u_{(k,i)}$ and
$$\psi|_{D_4}(\sum_kb_km_{2j_k}(\sum_ib_{(k,i)}u_{(k,i)}))
\leq g(2\sum_kb_ku_k),$$ \noindent hence by \cite{ADKM} Lemma 2.4(b)
we shall have the result.

For $f=\frac{1}{m_q}\sum_{p=1}^df_p\in\bigcup_sK^s(\psi|_{D_4})$ we set
$$J=\{ 1\leq p\leq d: f_p=f^{k,t}\;{\rm for}\;{\rm some}\; k=1\ldots ,n, \;
t=1,\ldots , r_k\},$$
$$T=\{ 1\leq p\leq d:\;{\rm there}\;{\rm exists}\;f^{k,t}\;{\rm with}\;
{\rm supp}f^{k,t} \subseteq {\rm supp}f_p \; {\rm strictly}\}.$$

For every $f\in\bigcup_sK^s(\psi|_{D_4})$ we shall define by induction
a functional $g_f$, by $g_f=0$ when $J\cup T=\emptyset$,
 while if $J\cup T\neq\emptyset $ we shall construct $g_f$ with the following
properties. Let $D_f= \bigcup_{p\in J\cup T}{\rm supp}f_p$ and
$u_k=\sum a_{f^{k,t}}e_{f^{k,t}}$, where $e_{f^{k,t}}=e_{\min{\rm
supp} f^{k,t}}$, then:

(a) ${\rm supp}\ g_f\subseteq {\rm supp}\ f$.

(b) $g_f\in K^{\prime }$ and $w(g_f)\geq w(f)$,

(c) $f|_{D_f}(\sum_kb_km_{2j_k}(\sum_ib_{(k,i)}u_{(k,i)})) \leq
g_f(2\sum_kb_ku_k)$.

\

\noindent Let $s>0$ and suppose that $g_f$ have been defined for all
$f\in\bigcup_{t=0}^{s-1}K^t(\psi|_{D_4})$ and let
$f=\frac{1}{m_q}(f_1+\ldots +f_d)\in K^s(\psi|_{D_4})\backslash
K^{s-1}(\psi|_{D_4})$ where the family $(f_p)_{p=1}^d$ is ${\mathcal
M}_q$-admissible if $q>1$, or ${\mathcal S}$-allowable if $q=1$. The
proofs of case (i) ($1/m_q=1/m_{2j_k}$ for some $k \leq n$) and case
(ii) ($1/m_q>1/m_{2j+1}$) are identical with \cite{ADKM} p 106.
Assume therefore that case (iii) holds, i.e., $1/m_q=1/m_{2j+1}$. For
the same reasons as in \cite{ADKM} we have that $T=\emptyset$.

Summing up we assume that $f \in K^s(\psi|_{D_4})\backslash
K^{s-1}(\psi|_{D_4})$ is of the form
$$f=\frac{1}{m_{2j+1}}\sum_{p=1}^d
  f_p=\frac{1}{m_{2j+1}}(Ey_1^*+\ldots+Ey_{k_2}^*+Ey_{k_2+1}^*+\ldots+Ey_d^*),$$
where $(y_i^*)_i$ is associated to
$(c_1^*,\ldots,c_{k_2}^*,d_{k_2+1}^*,\ldots)$ with $d_{k_2+1}^* \neq
c_{k_2+1}^*$, that $T=\emptyset$ and $J \neq \emptyset$, and it only
remains to define $g_f$ satisfying (a)(b)(c).

Now by the proof of \cite{ADKM} Proposition 2.9,
$\psi=\psi_{\varphi}$ was defined through the analysis of $\varphi$,
in particular by \cite{ADKM} Remark 2.19 (a),
$$\psi=\frac{1}{m_{2j+1}}\sum_{k \in I}\psi_{Ey_k^*}$$
for some subset $I$ of $\{1,\ldots,d\}$. Furthermore, for $l \in I$,
$l \leq k_2$ and $1 \leq k \leq d$, ${\rm supp}\ Ey_l^* \cap {\rm
supp}\ x_k=\emptyset$, therefore there is no functional in a family
of type I and II w.r.t. $\overline{x_k}$ of support included in
${\rm supp}\ Ey_l^*$ (see \cite{ADKM} Definition 2.11 p 77). This
implies that $D_{Ey_l^*}=\emptyset$ (\cite{ADKM} Definition p 85),
and therefore that $\psi_{Ey_l^*}=0$ (\cite{ADKM} bottom of p 85).

For $l \in I$, $l>k_2+1$, then since $\Phi$ is $1-1$,
$w(Ey_l^*)=w(Ed_l^*) \neq 1/m_{2j_k} \forall k$. Therefore
$w(\psi_{Ey_l^*}) \neq 1/m_{2j_k} \forall k$,
 \cite{ADKM} Remark 2.19 (a). Then by the definition of $D_4^k$,
${\rm supp}\ \psi_{Ey_l^*} \cap D_4^k=\emptyset$ for all $k$.

 Finally this
means that $\psi_{|D_4}=\frac{1}{m_{2j+1}}\psi_{Ey^*_{k_2+1}|D_4}$
and $J=\{k_2+1\}$, $D_f={\rm supp}\ f_{k_2+1}$. Write then
$f_{k_2+1}=f^{k_0,t}$ and set $g_f=\frac{1}{2}e^*_{f_{k_2+1}}$,
therefore (a)(b) are trivially verified. It only remains to check
(c). But by (5),
$$f|_{D_f}(\sum_kb_km_{2j_k}(\sum_ib_{(k,i)}u_{(k,i)}))\leq
b_{k_0} a_{f_{k_2+1}}$$
$$=b_{k_0} a_{f_{k_2+1}} e^*_{f_{k_2+1}}(e_{f_{k_2+1}})
=g_f(2b_{k_0} a_{f_{k_2+1}}e_{f_{k_2+1}})$$
$$=g_f(2\sum_t b_{k_0} a^{f_{k,t}}e_{f^{k,t}})
=g_f(2\sum_kb_ku_k).$$ So (c) is proved. Therefore $g_f$ is defined
for each $f$ by induction, and the Claim is verified. This concludes
the proof of the Proposition.
\end{proof}

\begin{prop} The space $X_u$ is of type (3). \end{prop}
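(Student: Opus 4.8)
The plan is to prove that $X_u$ is tight by support, i.e.\ that no two disjointly supported block subspaces of $X_u$ are isomorphic. Together with the Lemma above establishing that the canonical basis of $X_u$ is strongly asymptotically $\ell_1$ (and the observation that a tight space cannot contain the minimal space $\ell_1$), the implication ``strongly asymptotically $\ell_p$ not containing $\ell_p$ $\Rightarrow$ tight with constants'' recorded before Theorem \ref{final} then shows $X_u$ is tight with constants as well, so that $X_u$ falls in type (3c). To prove tightness by support, suppose towards a contradiction that $(x_n)$ and $(y_n)$ are disjointly supported normalised block sequences of $X_u$ and that $T\colon[x_n]\to[y_n]$, $Tx_n=y_n$, is an isomorphism (we may always reduce to this form after a small perturbation). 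The argument is the exact analogue, in the Argyros--Deliyanni setting, of the proof of Proposition \ref{gu}: seminormalised special convex combinations, R.I.\ special convex combinations, $j$-quadruples and Proposition \ref{criticalbis} play the roles of R.I.S.\ vectors, $(m,f)$-forms, special sequences and Lemma \ref{critical}. All the structural facts needed about these objects are available because $L\prec K$, so every element of $L$ has an analysis whose constituents lie in $L$; hence Lemma \ref{scc} and the results of Section~2 of \cite{ADKM} (in particular Proposition~2.9 and Corollary~2.17) apply with $\tilde K=L$.

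The heart of the proof is an inductive construction, inside $[x_n]$, of the data of a $j$-quadruple. Fixing $j>100$, one iterates Lemma \ref{scc}, passing to subsequences and perturbing $T$ so that all vectors involved lie in $\mathcal B$ and all functionals in the rational norming sets, to build normalised R.I.\ special convex combinations $u_1<u_2<\cdots<u_n$ in $[x_n]$, together with functionals $c_k^{*}\in\mathrm{conv}_{\Q}L_{2j_k}$ of weight $1/m_{2j_k}$ with $\mathrm{supp}\,c_k^{*}\subseteq\mathrm{supp}\,u_k$ and $c_k^{*}(u_k)\ge 1/2$; this support condition is legitimate because $L$ is stable under restriction to subsets of $\N$. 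The weights are dictated by the coding: $j_1$ is taken large with $j_1>2j+1$, and inductively $2j_k=\Phi(c_1^{*},\dots,c_{k-1}^{*})$, so that $u_k$, constructed only after $c_1^{*},\dots,c_{k-1}^{*}$ are fixed, is chosen to be a $(\tfrac1{m^4_{2j_k}},2j_k)$-R.I.s.c.c.\ of seminormalised s.c.c.'s exactly as in clause (i) of the definition of a $j$-quadruple, with all constituents supported far enough to the right; finally one picks decreasing s.c.c.\ coefficients $(b_k)_{k=1}^{n}$ so that $\sum_k b_k u_k$ is a $(\tfrac1{m^4_{2j+1}},2j+1)$-s.c.c.\ (clause (i)(c)), and the support ordering in clause (ii) is ensured by interleaving the choices of $u_k$ and $c_k^{*}$ with those of $Tu_k$. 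Since $u_k\in[x_n]$ while $Tu_k\in[y_n]$ and the two block sequences have disjoint supports, we automatically get $\mathrm{supp}\,c_k^{*}\cap\mathrm{supp}\,(Tu_k)=\emptyset$, so that $(j_k,\,Tu_k,\,c_k^{*},\,b_k)_{k=1}^{n}$ is a $j$-quadruple satisfying the disjointness hypothesis of Proposition \ref{criticalbis}. That proposition then gives, with $v:=\sum_{k=1}^{n}b_k m_{2j_k}u_k\in[x_n]$,
\[
\|Tv\|=\Bigl\|\sum_{k=1}^{n}b_k m_{2j_k}(Tu_k)\Bigr\|\le\frac{75}{m_{2j+1}^{2}}.
\]

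For the lower estimate, note that because the $c_k^{*}$ are successive with $c_1^{*}\in\mathrm{conv}_{\Q}L_{2j_1}$, $j_1>2j+1$, and the weights obey $2j_k=\Phi(c_1^{*},\dots,c_{k-1}^{*})$, the functional $w^{*}=\tfrac1{m_{2j+1}}(c_1^{*}+\cdots+c_n^{*})$ (or a representative obtained from $L$-compatible decompositions) belongs to $L$, i.e.\ it is a legitimate norming functional of weight $1/m_{2j+1}$. Evaluating $w^{*}$ on $v$, the off-diagonal terms vanish since $\mathrm{supp}\,c_k^{*}\subseteq\mathrm{supp}\,u_k$ and the $u_k$ are disjointly supported, so $\|v\|\ge w^{*}(v)=\tfrac1{m_{2j+1}}\sum_k b_k m_{2j_k}c_k^{*}(u_k)\ge\tfrac1{2m_{2j+1}}\sum_k b_k m_{2j_k}\ge\tfrac1{2m_{2j+1}}$. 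Hence $\|T^{-1}\|\ge\|v\|/\|Tv\|\ge m_{2j+1}/150$, which is impossible once $j$ is large since $m_{2j+1}\to\infty$. This contradiction shows that $X_u$ is tight by support, hence of type (3) (in fact (3c)).

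The main obstacle is the construction in the second paragraph: arranging that clauses (i)--(iii) of the definition of a $j$-quadruple all hold simultaneously. The delicate point is that the coding $\Phi$ fixes the weight $2j_k$ before $u_k$ is built, so one must interleave the choice of the coding functionals $c_k^{*}$ (which must lie in $\mathrm{conv}_{\Q}L_{2j_k}$, nearly norm $u_k$, and have support inside $\mathrm{supp}\,u_k$) with the construction of the R.I.s.c.c.'s $u_k$ of the prescribed weight via Lemma \ref{scc}, all while keeping the support-ordering condition (ii) for both $u_k$ and $Tu_k$, and the various ``sufficiently fast growth'' requirements on the $j_{(k,i)}$, the $n_k$ and the supports. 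Once the quadruple is in place the disjointness hypothesis of Proposition \ref{criticalbis} comes for free from the disjointness of the supports of $(x_n)$ and $(y_n)$, and the two norm estimates are routine.
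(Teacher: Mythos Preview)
Your overall strategy is right --- build a $j$-quadruple and combine Proposition \ref{criticalbis} (small norm on one side) with the special functional $\tfrac{1}{m_{2j+1}}\sum_k c_k^*$ (large norm on the other) to contradict boundedness of $T$. But you have inverted the roles of domain and range, and this creates a real gap.

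You construct the R.I.\ special convex combinations $u_k$ in $[x_n]$ and the functionals $c_k^*$ with $\mathrm{supp}\,c_k^*\subseteq\mathrm{supp}\,u_k$; then you assert that $(j_k,Tu_k,c_k^*,b_k)$ is a $j$-quadruple. For that, clause (i) of the definition requires $Tu_k$ itself to be a $(\tfrac{1}{m_{2j_k}^4},2j_k)$-R.I.s.c.c.\ built out of \emph{seminormalised} s.c.c.'s $Tx_{(k,i)}$, and those in turn must be convex combinations of \emph{normalised} vectors. An isomorphism $T$ does not preserve these normalisations: if $x_{(k,i)}=\sum_j a_j z_j$ with $\|z_j\|=1$, then $Tx_{(k,i)}=\sum_j a_j Tz_j$ with $\|Tz_j\|$ only known to lie in $[\|T^{-1}\|^{-1},\|T\|]$. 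So $Tx_{(k,i)}$ is not a seminormalised s.c.c.\ in the required sense, and the proof of Proposition \ref{criticalbis} --- which relies on \cite{ADKM} Proposition~2.9 and Corollary~2.17 applied to a genuine R.I.S.\ --- does not apply to $\sum_k b_k m_{2j_k}Tu_k$.

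The remedy is the one the paper uses: keep the R.I.s.c.c.\ structure on the \emph{domain} side, and put the functionals on the \emph{range} side. Writing $y_k\in[x_n]$ for the R.I.s.c.c.\ (so $y_k=\sum_t a_{k,t}z_{k,t}$ with $(z_{k,t})_t$ ${\mathcal M}_{2j_k}$-admissible), choose for each piece a functional $z_{k,t}^*\in L$ with $\mathrm{supp}\,z_{k,t}^*\subseteq\mathrm{supp}\,Tz_{k,t}$ and $z_{k,t}^*(Tz_{k,t})\geq\tfrac{1}{2}\|Tz_{k,t}\|\geq(2\|T^{-1}\|)^{-1}$, and set $c_k^*=\tfrac{1}{m_{2j_k}}\sum_t z_{k,t}^*\in L_{2j_k}$. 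Then $(j_k,y_k,c_k^*,b_k)$ is a genuine $j$-quadruple with $\mathrm{supp}\,c_k^*\cap\mathrm{supp}\,y_k=\emptyset$, Proposition \ref{criticalbis} gives $\bigl\|\sum_k b_k m_{2j_k}y_k\bigr\|\leq 75/m_{2j+1}^2$, while $\psi=\tfrac{1}{m_{2j+1}}\sum_k c_k^*\in L$ yields $\bigl\|T(\sum_k b_k m_{2j_k}y_k)\bigr\|\geq c/m_{2j+1}$, giving the contradiction $m_{2j+1}\leq C\|T\|\,\|T^{-1}\|$. Your lower-bound computation and the verification that $\psi\in L$ (or $\mathrm{conv}_{\Q}L$) are fine; only the placement of the $c_k^*$ needs to be switched.
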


\begin{proof} Assume towards a contradiction that $T$ is an isomorphism from
  some block-subspace $[x_n]$ of $X_u$  into the subspace $[e_i, i \notin \bigcup_n
{\rm supp}\ x_n]$. We may assume that $\max({\rm supp}\ x_n,{\rm
supp}\ Tx_n) < \min({\rm supp}\ x_{n+1},{\rm supp}\ Tx_{n+1})$ and
$\min{\rm supp}\ x_n<\min{\rm supp}\ Tx_n$ for each $n$, and by
Lemma \ref{scc}, that each $x_n$ is a $(\frac{1}{m_{2n}^4},2n)$
R.I.s.c.c. (\cite{ADKM} Definition 2.16). We may write
$$x_n=\sum_{t=1}^{p_n} a_{n,t}x_{n,t}$$ where $(x_{n,1},\ldots,x_{n,p_n})$ is
${\mathcal M}_{2n}$-admissible. Let for each $n,t$, $x_{n,t}^* \in
L$ be such that ${\rm supp}\ x_{n,t}^* \subseteq {\rm supp}\ Tx_{n,t}$
and such that
$$x_{n,t}^*(Tx_{n,t}) \geq \frac{1}{2}\norm{Tx_{n,t}} \geq
\frac{1}{4\norm{T^{-1}}},$$ and let
$x_n^*=\frac{1}{m_{2n}}(x_{n,1}^*+\ldots+x_{n,p_n}^*) \in L_{2n}$.
Note that ${\rm supp}\ x_n^* \cap {\rm supp}\ x_n=\emptyset$ and
that
$$x_n^*(Tx_n) \geq \frac{1}{m_{2n}}\sum_{t=1}^{p_n}
\frac{a_{n,t}}{4\norm{T^{-1}}}= (4\norm{T^{-1}}m_{2n})^{-1}.$$ We
may therefore for any $j>100$ construct a $j$-quadruple
 $(j_k,y_k,c_k^*,b_k)_{k=1}^n$ satisfying the hypotheses of Proposition
 \ref{criticalbis} and such that $y_k \in [x_i]_i$ and $c_k^*(Ty_k) \geq
 (4\norm{T^{-1}}m_{2j_k})^{-1}$ for each $k$ (note that we may assume
that $c_k^* \in L_{j_{2k}}$ for each $k$). From Proposition
 \ref{criticalbis} we deduce
$$\norm{\sum_{k=1}^n b_k m_{2j_k}y_k} \leq \frac{75}{m_{2j+1}^2}.$$
On the other hand $\psi=\frac{1}{m_{2j+1}}\sum_{k=1}^n c_k^*$
belongs to $L$ therefore
$$\norm{T(\sum_{k=1}^n b_k m_{2j_k}y_k)} \geq
\psi(\sum_{k=1}^n b_k m_{2j_k}Ty_k) \geq
\frac{1}{4\norm{T^{-1}}m_{2j+1}}.$$ We deduce finally that
$$m_{2j+1} \leq 300 \norm{T}\norm{T^{-1}},$$
which contradicts the boundedness of $T$.
\end{proof}

\subsection{A strongly asymptotically $\ell_{\infty}$ space tight by support}
Since the canonical basis of $X_u$ is tight and unconditional, it
follows that $X_u$ is reflexive. In particular this implies that the
dual basis of the canonical basis of $X_u$ is a strongly
asymptotically $\ell_{\infty}$ basis of $X_u^*$. It remains to prove
that this basis is tight with support.

It is easy to prove by duality that for any ${\mathcal
M}_{2j}$-admissible sequence of functionals $f_1,\ldots,f_n$ in
$X_u^*$, we have the upper estimate
$$\norm{\sum_i f_i} \leq m_{2j}\sup_i \norm{f_i}.$$
We use this observation to prove a lemma about the existence of
s.c.c. normed by functionals belonging to an arbitrary subspace of
$X_u^*$. The proof is standard except that estimates have to be
taken in $X_u^*$ instead of $X_u$.

\begin{lemme}\label{sccbis} For $\epsilon>0$, $j=1,2,\ldots$ and every
normalised block sequence $\{f_k\}_{k=1}^{\infty }$ in $X_u^*$,
there exists a normalised functional $f \in [f_k]$ and a
seminormalised $(\epsilon,2j)$--s.c.c. $x$ in $X_u$ such that ${\rm
supp}\ f \subseteq {\rm supp}\ x$ and $f(x) \geq 1/2$.
\end{lemme}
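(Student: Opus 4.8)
The plan is to mimic the standard construction of special convex combinations (as used to prove Lemma \ref{scc}), but carried out on the dual side, exploiting the upper $\ell_\infty$-type estimate for ${\mathcal M}_{2j}$-admissible families of functionals just recorded. First I would fix $\epsilon>0$ and $j$, and pass to a subsequence of $(f_k)$ so that the supports are spread out enough for the Schreier-type admissibility conditions to be met; in particular I will arrange that any sufficiently long block of the $f_k$'s with suitably placed minima is ${\mathcal M}_{2j}$-admissible. Using the basic s.c.c. existence result (the combinatorial fact behind \cite{ADKM} that $(\epsilon,2j)$-basic s.c.c.'s exist in $c_{00}$ on any sufficiently long interval lying in ${\mathcal M}_{2j}$), I choose coefficients $(a_s)$ and indices $k_1<\cdots<k_n$ so that $f:=\sum_s a_s f_{k_s}$ is, up to normalisation, an $(\epsilon,2j)$-s.c.c. of the block sequence $(f_k)$ in $X_u^*$.

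The next step is to produce the witnessing vector $x\in X_u$ together with the norming and support conditions. For each $s$ pick a normalised $w_s\in X_u$ with ${\rm supp}\ w_s\subseteq{\rm supp}\ f_{k_s}$ and $f_{k_s}(w_s)$ close to $1$ (possible since $L$, and hence the norm of $X_u^*$, is computed against functionals supported inside ${\rm supp}\ f_{k_s}$ after projection); then set $x:=\sum_s a_s w_s$ so that ${\rm supp}\ f\subseteq{\rm supp}\ x$ is automatic, $\sum_s a_s w_s$ is an $(\epsilon,2j)$-s.c.c. of a normalised block sequence of $X_u$ by construction, and $f(x)=\sum_s a_s f_{k_s}(w_s)\geq 1-\epsilon$. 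One must then check that $x$, after renormalising $f$ to be a unit functional, still satisfies $f(x)\geq 1/2$: this uses that $\norm{f}$, computed via ${\mathcal M}_{2j}$-admissible combinations, is controlled from below by $f(x)$ and from above by the $\ell_\infty$-estimate $\norm{\sum_s a_s f_{k_s}}\le m_{2j}\sup_s\norm{f_{k_s}}$, or more simply one observes $\norm{f}\le 1$ already if the $a_s$ sum to $1$ and the $f_{k_s}$ are normalised and ${\mathcal M}_{2j}$-admissible divided by $m_{2j}$ — in any case after a harmless renormalisation $f(x)\ge 1/2$ holds provided $\epsilon$ was taken small, and $x$ is seminormalised because $\norm{x}\ge f(x)\ge 1/2$.

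I would also need to verify that the resulting $x$ genuinely meets the definition of an $(\epsilon,2j)$-s.c.c. in $X_u$, i.e. that there exist $l_1<\cdots<l_n$ with $2<{\rm supp}\ w_1\le l_1<{\rm supp}\ w_2\le l_2<\cdots$ and $\sum_s a_s e_{l_s}$ an $(\epsilon,2j)$-basic s.c.c.; this is arranged by the initial choice of $k_1<\cdots<k_n$ and the subsequence thinning, taking $l_s=\max{\rm supp}\ w_s$, exactly as in the proof of Lemma \ref{scc}. The main obstacle, and the only place the argument is not completely routine, is coordinating the two admissibility requirements simultaneously: the combination must be ${\mathcal M}_{2j}$-admissible both as a combination of functionals $f_{k_s}\in X_u^*$ (to control $\norm{f}$ from above) and, via the witnessing vectors $w_s$, as a block sequence in $X_u$ (to make $x$ an honest s.c.c.), which is why the condition ${\rm supp}\ w_s\subseteq{\rm supp}\ f_{k_s}$ and the passage to a sufficiently spread subsequence are both essential. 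Everything else is a transcription of the $X_u$-side argument of Lemma \ref{scc} with estimates taken in $X_u^*$, as the paragraph preceding the lemma indicates.
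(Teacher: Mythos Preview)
There is a genuine gap at the step where you compute $f(x)$ and control $\norm{f}$. With $f=\sum_s a_s f_{k_s}$ and $x=\sum_s a_s w_s$ as you define them, disjointness of supports gives
\[
f(x)=\sum_s a_s^2\, f_{k_s}(w_s),
\]
not $\sum_s a_s f_{k_s}(w_s)$; since the $a_s$ are convex coefficients, $\sum_s a_s^2$ is of order $1/n$ and certainly not bounded below by $1/2$. If instead you intend $f=\sum_s f_{k_s}$ (so that your formula $f(x)=\sum_s a_s f_{k_s}(w_s)\approx 1$ becomes correct), then the triangle-inequality bound $\norm{f}\le 1$ is no longer available, and the only a priori upper bound is the dual estimate $\norm{f}\le m_{2j}$; after normalising you obtain merely $f(x)/\norm{f}\ge 1/m_{2j}$, nowhere near $1/2$. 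Neither reading of your construction closes, and the phrase ``harmless renormalisation'' hides exactly the difficulty.

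The paper's proof resolves this by an iteration you have omitted. One sets $f=\sum_{i\in I_k} f_i$ (a plain sum) and $x=\sum_{i\in I_k} a_i y_i$; if $\norm{f}\le 2$ for some $k$ one is done. Otherwise one normalises to obtain new functionals $f_k^1=\norm{\sum_{i\in I_k}f_i}^{-1}\sum_{i\in I_k}f_i$, builds s.c.c.'s over these, and repeats. If $\norm{\sum f}>2$ at every stage, then after $t_{2j}$ iterations one has expressed a normalised functional as an ${\mathcal M}_{2j}$-admissible combination of the original $f_i$'s with all coefficients below $2^{-t_{2j}}\le m_{2j}^{-2}$, contradicting the upper estimate $\norm{\sum g_i}\le m_{2j}\sup\norm{g_i}$. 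This telescoping of the repeated normalisations against the single dual upper bound is the missing idea; coordinating the two admissibility conditions, which you identify as the main obstacle, is in fact the routine part.
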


\begin{proof}
For each $k$ let $y_k$ be normalised such that ${\rm supp}\ y_k={\rm
supp}\ f_k$ and $f_k(y_k)=1$. Recall that the integers $k_n$ and
$t_n$ are defined by $k_1=1$, $2^{t_n}\geq m_n^2$ and
$k_n=t_n(k_{n-1}+1)+1$, and that ${\mathcal M}_j={\mathcal F}_{k_j}$
for all $j$.

 Applying Lemma \ref{scc}
we find a successive sequence of $(\epsilon,2j)$--s.c.c. of $(y_k)$
of the form $(\sum_{i \in I_k}a_i y_i)_k$ with $\{f_i, i \in I_k\}$
${\mathcal F}_{k_{2j-1}+1}$-admissible. If $\norm{\sum_{i \in
I_k}f_i} \leq 2$ for some $k$, we are done, for then
$$(\sum_{i \in I_k}f_i)(\sum_{i \in I_k}a_i y_i) \geq \frac{1}{2}\norm{\sum_{i
\in I_k}f_i}.$$ So assume $\norm{\sum_{i \in I_k}f_i}>2$ for
all $k$, apply the same procedure to the sequence
$f_k^1=\norm{\sum_{i \in I_k}f_i}^{-1}\sum_{i \in I_k}f_i$, and
obtain a successive sequence of $(\epsilon,2j)$--s.c.c. of the
sequence $(y_k^1)_k$ associated to $(f_k^1)_k$, of the form
$(\sum_{i \in I_k^1}a_i^1 y_i^1)_k$, with $\{f_l: {\rm supp}\ f_l
\subseteq \sum_{i \in I_k^1}f_i^1\}$ a ${\mathcal
F}_{k_{2j-1}+1}[{\mathcal F}_{k_{2j-1}+1}]$-admissible, and
therefore ${\mathcal M}_{2j}$-admissible set. Then we are done
unless $\norm{\sum_{i \in I_k^1}f_i^1}> 2$ for all $k$, in which
case we set
$$f_k^2=\norm{\sum_{j \in I_k^1}f_j^1}^{-1}
\sum_{j \in I_k^1}f_j^1$$ and observe by the upper estimate in
$X_u^*$ that
$$1=\norm{f_k^2}=\norm{\sum_{j \in I_k^1}\sum_{i \in I_j}\norm{\sum_{j \in
I_k^1}f_j^1}^{-1} \norm{\sum_{i \in I_j}f_i}^{-1} f_i} \leq
m_{2j}/4.$$ Repeating this procedure we claim that we are done in at
most $t_{2j}$ steps. Otherwise we obtain that the set
$$A=\{f_l:
{\rm supp}\ f_l \subseteq \sum_{i \in
I_k^{t_{2j-1}}}f_i^{t_{2j-1}}\}$$ is ${\mathcal M}_{2j}$-admissible.
Since $f_k^{t_{2j}}=\sum_{f_l \in A}\alpha_l f_l$, where the
normalising factor $\alpha_l$ is less than $(1/2)^{t_{2j}}$ for each
$l$, we deduce from the upper estimate that
$$1=\norm{f_k^{t_{2j}}} \leq 2^{-t_{2j}}m_{2j},$$
a contradiction by definition of the integers $t_i$'s. \end{proof}

\

To prove the last proposition of this section we need to make two
observations. First if $(f_1,\ldots,f_n) \in {\rm conv}_{\Q}L$ is
${\mathcal
  M}_{2j}$-admissible, then
$\frac{1}{m_{2j}}\sum_{k=1}^n f_k \in {\rm conv}_{\Q}L_{2j}.$
 Indeed  using the stability of $L$ under projections onto subsets of $\N$ we
may easily find convex rational coefficients $\lambda_i$
  such that each $f_k$ is of the form
$$f_k=\sum_i \lambda_i f_i^k,\ f_i^k \in L,\ {\rm supp}\ f_i^k
 \subseteq {\rm supp}\ f_k\ \forall i.$$
Then $\frac{1}{m_{2j}}\sum_{k=1}^n f_k=\sum_i \lambda_i
 (\frac{1}{m_{2j}}\sum_{k=1}^n f_i^k)$ and each
$\frac{1}{m_{2j}}\sum_{k=1}^n f_i^k$ belongs to $L_{2j}$.

Likewise if $\psi=\frac{1}{m_{2j+1}}(c_1^*+\ldots+c_d^*)$, $k>2j+1$,
$c_1^* \in {\rm conv}_{\Q} L_{2k}$ and $c_l^* \in {\rm conv}_{\Q}
L_{\Phi(c_1^*,\ldots,c_{l-1}^*)}\ \forall l \geq 2$, then $\psi \in
{\rm conv}_{\Q} L$. Indeed as above we may write
$$\psi=\sum_i \lambda_i (\frac{1}{m_{2j+1}}\sum_{l=1}^d f_i^l),\
f_i^1 \in L_{2k}, f_i^l \in L_{\Phi(c_1^*,\ldots,c_{i-1}^*)}(c_i^*)\
\forall l \geq 2,$$ and each $\frac{1}{m_{2j+1}}\sum_{l=1}^d f_i^l$
belongs to $L_{2j+1}^{\prime n+1} \subseteq L$.

\begin{prop} The space $X_u^*$ is of type (3).
\end{prop}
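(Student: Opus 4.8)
The plan is to prove that $X_u^*$ is tight by support, which combined with the already-established strong asymptotic $\ell_\infty$ property of the dual basis will give type (3). The strategy mirrors the proof for $G_u^*$ in Proposition \ref{gu} and for $X_u$ above: assume toward a contradiction that some normalised block sequence $(f_n^*)$ of $X_u^*$ embeds into $[e_i^* : i \notin \bigcup_n {\rm supp}\ f_n^*]$ via an isomorphism $T$, and derive a contradiction by constructing, for arbitrarily large $j$, a configuration to which Proposition \ref{criticalbis} applies, forcing $m_{2j+1} \leq C\|T\|\|T^{-1}\|$.

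First I would normalise the setup: passing to a subsequence and perturbing $T$, arrange that $Tf_n^*$ is a block sequence with $\max({\rm supp}\ f_n^*, {\rm supp}\ Tf_n^*) < \min({\rm supp}\ f_{n+1}^*, {\rm supp}\ Tf_{n+1}^*)$, and also that $Tf_n^* \in {\rm conv}_\Q L$ of norm at most $1$ (up to a harmless constant). Then I would invoke Lemma \ref{sccbis} repeatedly inside $[f_k^*]$: for each index I obtain a normalised functional $f \in [f_k^*]$ and a seminormalised $(\epsilon, 2j)$-s.c.c. $x$ in $X_u$ with ${\rm supp}\ f \subseteq {\rm supp}\ x$ and $f(x) \geq 1/2$. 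The key point, exactly as in the proof that $X_u$ is of type (3), is that by construction $Tf$ will be disjointly supported from $[x_n]_n$ and hence from the s.c.c. vectors $x$ being built, so the disjointness hypothesis ${\rm supp}\ c_k^* \cap {\rm supp}\ y_k = \emptyset$ required by Proposition \ref{criticalbis} will hold automatically — the conditional estimates are trivialised by disjointness of supports, just as throughout this section.

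Next I would assemble these pieces into a $j$-quadruple for the dual side. Building on the s.c.c. functionals from Lemma \ref{sccbis}, and using the two observations just before the statement (that $\frac{1}{m_{2j}}\sum f_k \in {\rm conv}_\Q L_{2j}$ for ${\mathcal M}_{2j}$-admissible $(f_k)$, and the analogous closure statement producing elements of $L$ from the $\Phi$-coded construction), I would form functionals $c_k^* \in {\rm conv}_\Q L_{2j_k}$ with the coding condition $2j_k = \Phi(c_1^*, \ldots, c_{k-1}^*)$ satisfied, together with R.I.s.c.c. vectors $y_k$ such that $y_k$ lies in (the image under $T^{-1}$ of a combination of) the $f_i^*$'s, $c_k^*(Ty_k) \geq (4\|T^{-1}\| m_{2j_k})^{-1}$, and ${\rm supp}\ c_k^* \cap {\rm supp}\ y_k = \emptyset$. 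Applying Proposition \ref{criticalbis} gives $\|\sum_k b_k m_{2j_k} y_k\| \leq 75 m_{2j+1}^{-2}$, while $\psi = \frac{1}{m_{2j+1}}\sum_k c_k^*$ lies in ${\rm conv}_\Q L \subseteq$ the norming set, so $\|T(\sum_k b_k m_{2j_k} y_k)\| \geq \psi(\sum_k b_k m_{2j_k} Ty_k) \geq (4\|T^{-1}\| m_{2j+1})^{-1}$. Comparing the two estimates yields $m_{2j+1} \leq C \|T\| \|T^{-1}\|$ for an absolute constant $C$, contradicting $j$ being arbitrary.

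**The main obstacle** I expect is bookkeeping rather than conceptual: one must verify that the s.c.c. construction in the dual (Lemma \ref{sccbis}) can be iterated while simultaneously respecting the admissibility/coding constraints defining a $j$-quadruple, and that the functionals $c_k^*$ genuinely land in ${\rm conv}_\Q L_{2j_k}$ with the right weights — this is where the two closure observations preceding the proposition do their work, and where one must be careful that $E$-projections (allowed in $L_{2j+1}^{n+1}$ but not in $X_{hi}$) leave the estimates intact. The disjointness of supports of the $Tf^*$'s from the $y_k$'s is what makes the conditional part of Proposition \ref{criticalbis} vacuous, so no new hard estimate is needed there; the proof is, as the section's philosophy dictates, a dualised and disjointness-simplified version of the $X_u$ argument.
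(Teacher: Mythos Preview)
Your overall strategy and choice of ingredients (Lemma \ref{sccbis}, Proposition \ref{criticalbis}, and the two closure observations immediately preceding the statement) are correct, but there is a genuine confusion in the execution about what $T$ acts on, and as written the final estimate does not make sense.

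The map $T$ is an isomorphism between subspaces of $X_u^*$; it carries functionals to functionals. The vectors $y_k$ live in $X_u$, so expressions such as $Ty_k$, $c_k^*(Ty_k)$, $\psi(\sum_k b_k m_{2j_k}Ty_k)$, and $\norm{T(\sum_k b_k m_{2j_k}y_k)}$ are undefined. You have transplanted the shape of the final inequality from the proof for $X_u$ (where $T$ did act on the block subspace $[x_n]\subset X_u$) without adapting it to the dual situation. Relatedly, the phrase ``$y_k$ lies in the image under $T^{-1}$ of a combination of the $f_i^*$'s'' is a type error: $y_k$ is a vector, $T^{-1}$ of a functional is a functional.

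In the paper's proof the roles are arranged as follows. One assumes $f_n\in{\rm conv}_\Q L$ (not $Tf_n$), and uses Lemma \ref{sccbis} so that the s.c.c.\ vectors $x_n$ have ${\rm supp}\ x_n\subset{\rm supp}\ Tf_n$ with $Tf_n(x_n)\geq 1/3$. One sets $c_k^*=m_{2j_k}^{-1}\sum_{n\in A_k}f_n\in{\rm conv}_\Q L_{2j_k}$ (first observation) and $y_k=\sum_{n\in A_k}a_n x_n$; then ${\rm supp}\ c_k^*\subset\bigcup_n{\rm supp}\ f_n$ and ${\rm supp}\ y_k\subset\bigcup_n{\rm supp}\ Tf_n$ give the disjointness needed for Proposition \ref{criticalbis}, while $Tc_k^*(y_k)\geq(3m_{2j_k})^{-1}$. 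Proposition \ref{criticalbis} bounds $\norm{\sum_k b_k m_{2j_k}y_k}\leq 75\,m_{2j+1}^{-2}$, and testing $\sum_k Tc_k^*$ against this vector gives $\norm{\sum_k Tc_k^*}\geq m_{2j+1}^2/225$. The second observation gives $\norm{\sum_k c_k^*}\leq m_{2j+1}$, and the contradiction is $m_{2j+1}\leq 225\norm{T}$. The comparison is entirely between the norms of $\sum c_k^*$ and $\sum Tc_k^*$ in $X_u^*$; the vector $\sum_k b_k m_{2j_k}y_k$ only serves to witness that $\norm{\sum_k Tc_k^*}$ is large.

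Your mirror-image version (applying Lemma \ref{sccbis} inside $[f_k^*]$ so that the $x$'s have support in $\bigcup_k{\rm supp}\ f_k^*$, and placing $Tf_n^*\in{\rm conv}_\Q L$ so that $c_k^*$ is built from the $Tf$'s) can also be made to work, but then it is $T^{-1}c_k^*$ that acts on $y_k$, and the contradiction reads $m_{2j+1}\leq C\norm{T^{-1}}$ via $\norm{\sum T^{-1}c_k^*}$ large versus $\norm{\sum c_k^*}\leq m_{2j+1}$. Either way, no $Ty_k$ ever appears.
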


\begin{proof} Assume towards a contradiction that $T$ is an isomorphism from
  some block-subspace $[f_n]$ of $X_u^*$  into the subspace $[e_i^*, i \notin
\cup_n {\rm supp}\ f_n]$. We may assume that $\max({\rm supp}\
f_n,{\rm supp}\ Tf_n) < \min({\rm supp}\ f_{n+1},{\rm supp}\
Tf_{n+1})$ and $\min{\rm supp}\ Tf_n<\min{\rm supp}\ f_n$ for each
$n$. Since the closed unit ball of $X_u^*$ is equal to
$\overline{{\rm conv}_{\Q}L}$ we may also assume that $f_n \in {\rm
conv}_{\Q}L$ for each $n$. Applying Lemma \ref{sccbis}, we may also
suppose that each $f_n$ is associated to a $(\frac{1}{m_{2n}^4},2n)$
s.c.c. $x_n$ with $Tf_n(x_n) \geq 1/3$ and ${\rm supp}\ x_n \subset
{\rm supp}\ Tf_n$, and we shall also assume that $\norm{Tf_n}=1$ for
each $n$. Build then for each $k$ a $(\frac{1}{m_{2k}^4},2k)$
R.I.s.c.c. $y_k=\sum_{n \in A_k}a_n x_n$ such that $(Tf_n)_{n \in
A_k}$ and therefore $(f_n)_{n \in A_k}$ is ${\mathcal
M}_{2k}$-admissible. Then note that by the first observation
before this proposition,
$$c_k^*:=m_{2k}^{-1}\sum_{n \in A_k}f_n \in {\rm conv}_{\Q}L_{2k},$$
and  observe that ${\rm supp}\ c_k^* \cap {\rm supp}\
y_k=\emptyset$ and that $Tc_k^*(y_k) \geq (3m_{2k})^{-1}$.

We may therefore for any $j>100$ construct a $j$-quadruple
 $(j_k,y_k,c_k^*,b_k)_{k=1}^n$ satisfying the hypotheses of Proposition
 \ref{criticalbis} and such that $c_k^* \in [f_i]_i$ and $Tc_k^*(y_k) \geq
 (3m_{2j_k})^{-1}$ for each $k$. From
Proposition
 \ref{criticalbis} we deduce
$$\norm{\sum_{k=1}^n b_k m_{2j_k}y_k} \leq \frac{75}{m_{2j+1}^2}.$$
Therefore
$$\norm{\sum_{k=1}^d Tc_k^*} \geq
\frac{\sum_{k=1}^d b_k m_{2j_k}Tc_k^*(y_k)}{\norm{\sum_{k=1}^n b_k
m_{2j_k}y_k}} \geq \frac{m_{2j+1}^2}{225},$$ but on the other hand
$$\norm{\sum_{k=1}^d c_k^*} \leq m_{2j+1}$$
since by the second observation the functional
$m_{2j+1}^{-1}\sum_{k=1}^d c_k^*$ belongs to ${\rm conv}_{\Q}L$. We
deduce finally that
$$m_{2j+1} \leq 225 \norm{T},$$
which contradicts the boundedness of $T$.
\end{proof}

\section{Problems and comments}

Obviously the general question one is compelled to ask is whether it is possible to find an example for each of the classes or subclasses appearing in the chart of Theorem \ref{final}. 
However we wish to be more specific here and concentrate on the classes which either seem particularly interesting, or easier to study, or which are related to one of the spaces considered in this paper.

\

Let us first observe that the examples of locally minimal, tight spaces produced so far could
be said to be so for trivial reasons: since they hereditarily contain
$\ell_{\infty}^n$'s uniformly, any Banach space is crudely finitely
representable in any of their subspaces.
It remains open whether there exist other examples. Observing that a locally minimal and tight space cannot be strongly asymptotically $\ell_p, 1 \leq p<+\infty$, by one of the implications in the  diagram before Theorem \ref{final}, and up to the 6th dichotomy, the problem may be summed up as:

\begin{prob} Find
 a tight, locally minimal, uniformly inhomogeneous Banach space
which does not contain $\ell_{\infty}^n$'s uniformly, or equivalently, which has finite
cotype. 
\end{prob}

It also unknown whether tightness by range and tightness with constants are the only possible forms of tightness, up to passing to subspaces. Equivalently, using the 4th and 5th dichotomy:

\begin{prob} Find a tight Banach space which is sequentially and locally minimal.
\end{prob}

Or, since such a space would have to be of type (2), (5b), (5d): 

\begin{prob} \

 \begin{itemize}
\item[(a)] Find a HI space which is sequentially minimal.
\item[(b)] Find a space of type (5b).
\item[(c)]  Find a space of type (5d). Is the dual of some modified mixed Tsirelson's space such a space?
\end{itemize}
\end{prob}

For the next problem, we observe that the only  known examples of spaces tight with constants are strongly asymptotic $\ell_p$ spaces not containing $\ell_p$, where $1 \leq p <+\infty$.

\begin{prob} Find a space tight with constants and uniformly inhomogeneous. \end{prob}

More specifically, listing two  subclasses for which we have a possible candidate:

\begin{prob}
\

\begin{itemize} 
\item[(a)] Find a space of type (1a). Is $G$ or one of its subspaces such a space?
\item[(b)] Find a space of type (3a). Is $G_u$ or one of its subspaces such a space?
\end{itemize}
\end{prob}

\

Recently, S. Argyros, K. Beanland and T. Raikoftsalis \cite{ABR,ABR2} constructed an example $X_{abr}$ with a basis which is strongly asymptotically $\ell_2$ and therefore weak Hilbert, yet every operator is a strictly singular perturbation of a diagonal map, and no disjointly supported subspaces are isomorphic. In our language, $X_{abr}$ is therefore a new space of type (3c), which we include in our chart.

We conclude by mentioning the very recent and remarkable result of  S. Argyros and R. Haydon solving the scalar plus compact problem \cite{AH}: there exists a HI space which is a predual of $\ell_1$ and on which every operator is a compact perturbation of a multiple of the identity. To our knowledge nothing is known about the exact position of this space in the chart of Theorem \ref{final}.

\begin{prob} Find whether Argyros-Haydon's space is of type (1) or of type (2). \end{prob}

\

\begin{flushleft}

{\em Address of V. Ferenczi:}\\

Departamento de Matem\'atica,\\

Instituto de Matem\'atica e Estat\' \i stica,\\

Universidade de S\~ao Paulo,\\

rua do Mat\~ao, 1010, \\

05508-090 S\~ao Paulo, SP,\\

Brazil.\\
\texttt{ferenczi@ime.usp.br}
\end{flushleft}

\

\begin{flushleft}
{\em Address of C. Rosendal:}\\
Department of Mathematics, Statistics, and Computer Science\\
University of Illinois at Chicago,\\
851 S. Morgan Street,\\
Chicago, IL 60607-7045,\\
USA.\\
\texttt{rosendal@math.uic.edu}
\end{flushleft}

\end{document}